\newtheorem{thm}{Theorem}[section]
\newtheorem{cor}[thm]{Corollary}
\newtheorem{prop}[thm]{Proposition}
\newtheorem{lem}[thm]{Lemma}
\newtheorem{conj}[thm]{Conjecture}
\theoremstyle{definition}
\newtheorem{defn}[thm]{Definition}
\newtheorem{exmp}[thm]{Example}
\theoremstyle{remark}
\let\c@equation\c@thm
\numberwithin{equation}{section}
\title{Wilf Equivalences for Patterns in Rooted Labeled Forests}
\author{Michael Ren}
\begin{document}

\begin{abstract}

Building off recent work of Garg and Peng, we continue the investigation into classical and consecutive pattern avoidance in rooted forests, resolving some of their conjectures and questions and proving generalizations whenever possible. Through extensions of the forest Simion-Schmidt bijection introduced by Anders and Archer, we demonstrate a new family of forest-Wilf equivalences, completing the classification of forest-Wilf equivalence classes for sets consisting of a pattern of length 3 and a pattern of length at most $5$. We also find a new family of nontrivial c-forest-Wilf equivalences between single patterns using the forest analogue of the Goulden-Jackson cluster method, showing that a $(1-o(1))^n$-fraction of patterns of length $n$ satisfy a nontrivial c-forest-Wilf equivalence and that there are c-forest-Wilf equivalence classes of patterns of length $n$ of exponential size. Additionally, we consider a forest analogue of super-strong-c-Wilf equivalence, introduced for permutations by Dwyer and Elizalde, showing that super-strong-c-forest-Wilf equivalences are trivial by enumerating linear extensions of forest cluster posets.

\end{abstract}

\maketitle

\section{Introduction}
\label{intro}

A sequence of distinct integers is said to \emph{avoid} a permutation, or pattern, $\pi=\pi(1)\cdots\pi(k)$ of $[k]=\{1,\ldots,k\}$ if it contains no subsequence that is in the same relative order as $\pi$. The study of pattern avoidance in permutations of $[n]$ was started in 1968 by Knuth in \cite{K}, where stack sorting was linked to permutations avoiding the pattern $231$. Since then, pattern avoidance has blossomed into a very active area of research, with many connections made to classical and contemporary results in enumerative and algebraic combinatorics \cite{Ki}. Different variants of permutation pattern avoidance, for example avoidance of consecutive patterns \cite{E} and of generalized patterns \cite{S}, have also been extensively studied, along with notions of pattern avoidance in other combinatorial structures such as binary trees \cite{R}, posets \cite{HW}, heaps \cite{LPRS}, and forests \cite{BLNPPRT}.

The variant of pattern avoidance that we investigate is in rooted labeled forests, a notion introduced in 2018 by Anders and Archer in \cite{AA}. Here, we consider unordered (non-planar) rooted forests on $n$ vertices such that each vertex has a different label in $[n]$, which we call \emph{rooted forests} on $[n]$. Such a forest is then said to \emph{avoid} a pattern $\pi$ if the sequence of labels from the root to any leaf avoids $\pi$ in the sense described in the previous paragraph. As a special case, this includes the aforementioned case of permutation pattern avoidance when the forest is taken to be a path. Anders and Archer find the number $f_n(S)$ of forests avoiding a set $S$ of patterns in \cite{AA} for certain sets $S$. They also study \emph{forest-Wilf equivalence}, the phenomenon when $f_n(S)=f_n(S')$ for different sets $S$ and $S'$ of patterns and all $n\in\mathbb N$. Their work was continued by Garg and Peng in \cite{GP} where the authors posed several open questions, some of which we resolve in this paper.

A classical result in enumerating pattern-avoiding permutations is that for any pattern $\pi$ of length $3$, the number $|\text{Av}_\pi(n)|$ of permutations of $[n]$ avoiding $\pi$ is the $n$th Catalan number $C_n=\frac1{n+1}\binom{2n}n$, shown for example through the \emph{Simion-Schmidt bijection} in \cite{SS}. The independence of this count from the specific pattern being avoided launched the study of \emph{Wilf equivalence}, the phenomenon when $|\text{Av}_S(n)|=|\text{Av}_{S'}(n)|$ for different sets $S$ and $S'$ of patterns. Work of Stankova and West \cite{SW} along with generalizations of the Simion-Schmidt bijection by Backelin, West, and Xin \cite{BWX} show the existence of infinite families of Wilf equivalences that explain nearly all known Wilf equivalences between single patterns. Using a forest variant of the Simion-Schmidt bijection, Anders and Archer proved the forest-Wilf equivalence of $123$ and $132$ in \cite{AA}, which was generalized by Garg and Peng in \cite{GP} through the following theorem.

\begin{thm}[{\cite[Theorem 1.3]{GP}}]
\label{gptwist}
Suppose that the patterns $\pi_1,\ldots,\pi_m$ satisfy the property that $\pi_i(k_i-1)=k_i-1$ and $\pi_i(k_i)=k_i$ where $\pi_i=\pi_i(1)\cdots\pi_i(k_i)$. Then $\{\pi_1,\ldots,\pi_m\}$ and $\{\widetilde\pi_1,\ldots,\widetilde\pi_m\}$ are forest-Wilf equivalent.
\end{thm}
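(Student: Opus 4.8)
The plan is to build an explicit shape-preserving bijection $\Phi$ from the rooted forests on $[n]$ avoiding $\{\widetilde\pi_1,\dots,\widetilde\pi_m\}$ to those avoiding $\{\pi_1,\dots,\pi_m\}$, generalizing the forest Simion-Schmidt bijection of Anders and Archer (the case $m=1$, $\pi_1=123$) and, underneath it, the classical Simion-Schmidt bijection on paths. Since each $\pi_i$ ends in its two largest letters in increasing order, write $\pi_i=\sigma_i\,(k_i-1)\,k_i$ with $\sigma_i$ a permutation of $[k_i-2]$, so that $\widetilde\pi_i=\sigma_i\,k_i\,(k_i-1)$, and let $\sigma_i'=\sigma_i\,(k_i-1)$ be the length-$(k_i-1)$ prefix pattern common to $\pi_i$ and $\widetilde\pi_i$ (a permutation of $[k_i-1]$ whose maximum is its last letter); write $\ell(v)$ for the label of a vertex $v$. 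The first step is a uniform ``local'' reformulation of avoidance. Since an occurrence of a pattern along a root-to-leaf path, truncated just after its last letter, is an occurrence along a root-to-vertex path, it suffices to forbid occurrences on root-to-vertex paths; and regrouping the first $k_i-2$ letters of a defining occurrence together with one of the last two shows that $F$ contains $\pi_i$ exactly when some vertex $u$ is the apex (realizing the largest, last letter) of an occurrence of $\sigma_i'$ along the path from the root to $u$ and has a descendant $v$ with $\ell(v)>\ell(u)$, whereas $F$ contains $\widetilde\pi_i$ exactly when some such apex $u$, with $M$ the largest label appearing in the $\sigma_i$-part of its occurrence, has a descendant $v$ with $M<\ell(v)<\ell(u)$. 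Thus in both cases avoidance is governed by whether an ``extra'' descendant $v$ of the apex of a $\sigma_i'$-occurrence can be inserted, and the two cases differ only in the range that $\ell(v)$ must fall into relative to the two largest labels of that occurrence --- precisely the ``$12$ versus $21$'' discrepancy that Simion-Schmidt is designed to handle.

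To construct $\Phi$, call a vertex \emph{low} if its label is smaller than the labels of all of its proper ancestors (so roots and the vertex labelled $1$ are low) and \emph{high} otherwise; a high vertex always has a low proper ancestor with a smaller label. Fix once and for all, for each unlabelled rooted forest, a linear order on its vertices refining the ancestor order --- say breadth-first order. Given $F$ avoiding $\{\widetilde\pi_i\}$, let $\Phi(F)$ have the same underlying unlabelled forest, the same labels on the low vertices, and the remaining (``high'') labels distributed among the high vertices in \emph{decreasing} order along the chosen linear order, so that along each root-to-vertex path the high vertices receive decreasing labels --- as is forced for a $\{\pi_i\}$-avoider. The candidate inverse does the same but distributes the high labels in \emph{increasing} order. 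It then remains to verify: (i) the low vertices of $\Phi(F)$ are exactly the low vertices of $F$, so that $\Phi$ and its candidate inverse are mutually inverse; and (ii) $\Phi(F)$ avoids every $\pi_i$ if and only if $F$ avoids every $\widetilde\pi_i$. For (ii) one invokes the reformulation: a putative $\pi_i$-occurrence in $\Phi(F)$ supplies an apex $u$, a $\sigma_i'$-occurrence among its ancestors, and a descendant $v$ with $\ell'(v)>\ell'(u)$; splitting the vertices of this occurrence into low and high, using that $\ell$ and $\ell'$ agree on low vertices and that high labels decrease along root-to-vertex paths of $\Phi(F)$, one should extract from it the ``$M<\ell(v)<\ell(u)$'' configuration that the reformulation forbids in a $\{\widetilde\pi_i\}$-avoiding $F$ (and symmetrically for the inverse), the descendant $v$ migrating between the two admissible ranges exactly because of the decreasing-versus-increasing refill.

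I expect the main obstacle to be carrying out (i) and (ii) for several patterns of possibly different lengths at once, using a single low/high partition and refill rule that does not reference the patterns: one has to control how redistributing the large labels can create or destroy $\sigma_i'$-occurrences whose non-apex vertices are themselves high, and confirm that breadth-first order --- or whichever refinement of the ancestor order is correct --- makes the conversion in (ii) go through uniformly in $i$. This reduces to a careful case analysis tracking, through $\Phi$, the relative order of the apex, the maximum of its $\sigma_i$-part, and the candidate descendant; this is the genuinely new content beyond the $m=1$, $\pi_1=123$ case treated by Anders and Archer. A possibly cleaner variant worth trying is to define $\Phi$ recursively via the root/subforest decomposition of a rooted forest and to induct on $n$, localizing the bookkeeping to individual subtrees.
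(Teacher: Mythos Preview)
Your proposed bijection $\Phi$ is not injective, so the approach breaks down already in the base case $m=1$, $\pi_1=123$, $\widetilde\pi_1=132$. Take the tree with root $r$, children $a,b$ of $r$, and a single child $c$ of $a$. The two labellings
\[
F:\ (\ell(r),\ell(a),\ell(b),\ell(c))=(1,2,4,3),\qquad
F':\ (\ell(r),\ell(a),\ell(b),\ell(c))=(1,2,3,4)
\]
are both $132$-avoiding (the root-to-leaf paths are $1,2,3$ and $1,4$, respectively $1,2,4$ and $1,3$). In each, the only low vertex is $r$, the high vertices are $a,b,c$, and the set of high labels is $\{2,3,4\}$. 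Your rule ``keep low labels, redistribute high labels in decreasing order along BFS'' therefore sends both $F$ and $F'$ to the same labelling $(1,4,3,2)$. The point is that incomparable high vertices in different branches can carry their labels in either order in a $132$-avoider, and a global linear order on the vertex set cannot record this information; so step (i) can hold while the map still collapses distinct forests.

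The construction in Garg--Peng (sketched in this paper via the shuffle and antishuffle operations and the maps $\alpha,\beta$) avoids exactly this pitfall by working \emph{locally}: one processes the non-TDM vertices one at a time by depth, and at each such vertex $v$ permutes labels only within the subtree rooted at $v$, preserving the relative order of the strict descendants. In the example above, $\alpha$ sends $F$ to $(1,3,4,2)$ and $F'$ to $(1,4,3,2)$, keeping them distinct. Your closing suggestion of a recursive, subtree-by-subtree definition is much closer to this correct mechanism; the global ``refill along a fixed linear extension'' cannot be repaired, because no single linear order on high vertices simultaneously encodes, for every $\{\widetilde\pi_i\}$-avoider, which high labels went to which branch.
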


Here, we denote by $\widetilde\pi$ the \emph{twist} of the pattern $\pi=\pi(1)\cdots\pi(k)$, which we define to be the resulting pattern from switching the last two elements so that $\widetilde\pi=\pi(1)\cdots\pi(k-2)\pi(k)\pi(k-1)$. Through a computer calculation done by Garg and Peng, it can be verified that Theorem \ref{gptwist} explains all nontrivial single pattern forest-Wilf equivalences between patterns of length at most $5$ \cite{GPp}. Here, a \emph{trivial} forest-Wilf equivalence is one between the sets $S=\{\pi_1,\ldots,\pi_k\}$ and $\overline S=\{\overline\pi_1,\ldots,\overline\pi_k\}$ where $\overline\pi=k+1-\pi(1),\ldots,k+1-\pi(k)$ is the \emph{complement} of $\pi=\pi(1)\cdots\pi(k)$. Such an equivalence is trivial because inverting the order of the labels bijects forests avoiding $S$ with forests avoiding $\overline S$. However, Theorem \ref{gptwist} does not explain the following forest-Wilf equivalences conjectured by Garg and Peng in \cite{GP}.

\begin{conj}[{\cite[Conjecture 7.1]{GP}}]
\label{gp71}
The following pairs of sets are forest-Wilf equivalent:
\begin{enumerate}
    \item [(i)] $\{123,2413\}$ and $\{132,2314\}$;
    \item [(ii)] $\{123,3142\}$ and $\{132,3124\}$;
    \item [(iii)]
    $\{213,4123\}$ and $\{213,4132\}$.
\end{enumerate}
\end{conj}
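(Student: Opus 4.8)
The plan is to prove (i) and (ii) using the Simion-Schmidt map together with its forest incarnation, and to treat (iii) by a related but separate argument. Write $\Theta\colon\text{Av}(123)\to\text{Av}(132)$ for the Simion-Schmidt bijection on permutations, which fixes the positions and values of the left-to-right minima and arranges the remaining entries in the unique way that makes the permutation $132$-avoiding, and let $\Phi$ be its forest analogue from \cite{AA,GP} (the bijection underlying the $123\leftrightarrow 132$ case of Theorem \ref{gptwist}), which acts as $\Theta$ along every root-to-leaf path, coherently across branches. The starting point is the observation that the length-four patterns in (i) and (ii) are themselves Simion-Schmidt partners: both $2413$ and $3142$ avoid $123$, and $\Theta(2413)=2314$, $\Theta(3142)=3124$, because in each pair the two patterns carry the same left-to-right-minima data and differ only in whether their two non-minimum entries appear in decreasing or increasing order, exactly as $123$ relates to $132$. (Here $3124=\widetilde{3142}$ is also the twist of $3142$, so (ii) amounts to ``twist everything'', whereas in (i) the pattern $2314$ is \emph{not} the twist of $2413$ and the more general Simion-Schmidt move is genuinely needed.)

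For (i) and (ii) I would first reduce to a statement about permutations: $\sigma\in\text{Av}(123)$ avoids $2413$ if and only if $\Theta(\sigma)$ avoids $2314$, and $\sigma\in\text{Av}(123)$ avoids $3142$ if and only if $\Theta(\sigma)$ avoids $3124$. This should follow from a finite case analysis, since an occurrence of $2413$ (resp.\ $3142$) in $\sigma$ uses four entries in one of finitely many configurations relative to the left-to-right minima, and one checks that the re-sorting of the non-minima carries such configurations bijectively onto occurrences of $2314$ (resp.\ $3124$). The forest statements then follow at once: the path from the root to any leaf of $\Phi(F)$ is exactly $\Theta$ applied to the corresponding path of $F$ (since a $132$-avoiding permutation is determined by its left-to-right minima), so $F$ avoids $\{123,2413\}$ iff every root-to-leaf path of $F$ avoids $123$ and $2413$, iff every root-to-leaf path of $\Phi(F)$ avoids $132$ and $2314$, iff $\Phi(F)$ avoids $\{132,2314\}$; case (ii) is identical with $(3142,3124)$ replacing $(2413,2314)$.

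Case (iii) requires a different idea: a $213$-avoiding forest need not avoid $123$, so $\Phi$ does not apply, and although $4132=\widetilde{4123}$ is the twist of $4123$, Theorem \ref{gptwist} is silent because $4123$ does not end in its two largest entries, so the extra pattern $213$ must be doing the work. I would use the decompositions $4123=4\cdot 123$ and $4132=4\cdot 132$, where $4\cdot\tau$ denotes the pattern obtained by prepending a new largest entry to $\tau$. In a $213$-avoiding path, the entries following a given position $i$ that are smaller than the entry at $i$ occupy a contiguous suffix $Q_i$ of the path, and a pattern $4\cdot\tau$ occurs in the path exactly when $\tau$ occurs inside one of these suffixes $Q_i$; hence avoiding $\{213,4\cdot 123\}$ is equivalent to avoiding $213$ together with avoiding $123$ inside every such descent suffix, and similarly with $132$ on the other side. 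The bijection for (iii) would then be built by applying the forest Simion-Schmidt rearrangement within this nested family of descent suffixes, turning $123$-avoidance into $132$-avoidance in each $Q_i$ while leaving the $213$-avoiding skeleton of the forest fixed.

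I expect the main obstacle to be making the construction in (iii) precise and verifying that it is a well-defined bijection. The descent suffixes $Q_i$ attached to different vertices of the forest overlap and nest, so the rearrangements inside them must be shown to be mutually compatible and to glue into an honest forest, and one must check that the rearrangement neither creates nor destroys an occurrence of $213$, nor an occurrence of $4\cdot 132$ spread across several suffixes. Once the finite permutation-level analysis of $\Theta$ on $2413$ and $3142$ is in hand, cases (i) and (ii) should be essentially routine, and all three bijections will be explicit.
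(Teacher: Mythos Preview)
Your plan for (i) and (ii) contains a genuine gap in the step where you lift the permutation statement to forests. The claim that ``the path from the root to any leaf of $\Phi(F)$ is exactly $\Theta$ applied to the corresponding path of $F$'' is false: the forest Simion--Schmidt bijection permutes labels throughout subtrees, so the \emph{set} of labels appearing along a given root-to-leaf path changes under $\Phi$. For instance (cf.\ Figure~\ref{123132fig}), the path $7,2,8,3$ in a $123$-avoiding forest becomes $7,2,3,4$ after $\beta$; the label $8$ has migrated to a different branch and been replaced by $4$. Since $\Theta$ is a bijection on sequences with a fixed value set, $\Phi$ restricted to a path cannot equal $\Theta$ on that path. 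Even at the level of patterns this fails: $7,2,8,3$ has pattern $3142$, the image $7,2,3,4$ has pattern $4123$, but $\Theta(3142)=3124$. So your step~2 (``the forest statements then follow at once'') does not go through, and the permutation-level equivalence you propose---while true---does not transfer to forests by the argument you give.

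The paper fixes exactly this by working one level up: instead of reducing to a permutation statement and then lifting path-by-path, it isolates a property $P_1$ (for (i)) and $P_2$ (for (ii)) of the \emph{whole forest}, defined in terms of top-down minima, such that a $123$-avoiding forest avoids $2413$ iff it is a $P_1$-forest, a $132$-avoiding forest avoids $2314$ iff it is a $P_1$-forest, and crucially each individual shuffle or antishuffle preserves $P_1$. The point is that the preservation argument (Lemma~\ref{shufp1}) is genuinely global: one shows that in a $P_1$-forest a shuffle at a non-TDM vertex $u$ only moves labels within a certain subtree $T_u$, which uses information about descendants of $u$ on \emph{all} branches simultaneously. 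Your permutation-level analysis would give the first two equivalences but not the third. For (iii) your outline is essentially the paper's approach: the ``descent suffixes $Q_i$'' are the paper's small subforests, and making the recursion well-defined is done there via a rank function and the additivity Lemma~\ref{rankadd}, which is the precise version of the compatibility check you flag as the main obstacle.
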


By restricting the forest Simion-Schmidt bijection, we prove parts (i) and (ii) of this conjecture in Section \ref{clas}. We also prove the following generalization of part (iii) by recursively applying the forest Simion-Schmidt bijection. Along with Theorem \ref{gptwist} and parts (i) and (ii) of Conjecture \ref{gp71}, Theorem \ref{213twist} explains all nontrivial forest-Wilf equivalences between sets consisting of a pattern of length $3$ and a pattern of length $k\le5$, again by a computer calculation of Garg and Peng \cite{GPp}.

\begin{thm}
\label{213twist}
If $\pi(k)=\pi(k-1)+1=\pi(k-2)+2$ in the pattern $\pi=\pi(1)\cdots\pi(k)$, then $\{213,\pi\}$ and $\{213,\widetilde\pi\}$ are forest-Wilf equivalent.
\end{thm}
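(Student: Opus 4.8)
The goal is to show $\{213, \pi\}$ and $\{213, \widetilde\pi\}$ are forest-Wilf equivalent, where $\pi$ ends in three consecutive values $\pi(k-2), \pi(k-1), \pi(k)$ with $\pi(k) = \pi(k-1)+1 = \pi(k-2)+2$. Note that $\widetilde\pi$ ends in $\pi(k), \pi(k-1)$, i.e. the last two of these three consecutive values are swapped. The plan is to set up a bijection between forests on $[n]$ avoiding $\{213, \pi\}$ and those avoiding $\{213, \widetilde\pi\}$ by adapting the forest Simion-Schmidt bijection of Anders–Archer and Garg–Peng. Let me describe the idea.

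First I would recall how the forest Simion-Schmidt bijection acts: roughly, one reads each root-to-leaf path and, among the "active" positions (those playing a potential role in an occurrence of the forbidden suffix pattern), performs a local rearrangement analogous to the classical Simion-Schmidt map that toggles between avoiding $\cdots 21$-type and $\cdots 12$-type suffixes. The key structural point is that in a forest simultaneously avoiding $213$, the set of labels appearing below a given vertex $v$ on all paths through $v$ is highly constrained: avoiding $213$ forces a strong monotonicity/interval structure on the labels available beneath each vertex. I would make this precise with a lemma characterizing, for a $213$-avoiding forest, which triples of vertices on a root-to-leaf path can realize the consecutive-values suffix of $\pi$ versus $\widetilde\pi$; the claim will be that an occurrence of $\pi$'s suffix can be converted to an occurrence of $\widetilde\pi$'s suffix (and conversely) by swapping the images of the two deepest vertices, and that because these values are \emph{consecutive}, no other labels lie strictly between them, so the swap does not disturb any $213$-pattern or any occurrence of the prefix $\pi(1)\cdots\pi(k-3)$.

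The main step is then to iterate this swap. Following the proof strategy behind Theorem \ref{gptwist}, I would process the forest by scanning vertices in decreasing label order (or by a suitable recursion on the deepest relevant vertices), at each stage applying the local two-element swap described above to eliminate/create the relevant suffix occurrences, and argue that $213$-avoidance is preserved throughout because the swapped labels are consecutive integers with nothing between them. One shows the composite map is well-defined (the order of swaps doesn't create new $213$ patterns, using that $213$-avoidance already pins down the descendant-label structure), is a bijection by exhibiting the inverse as the analogous procedure with the roles of $\pi$ and $\widetilde\pi$ reversed, and preserves the underlying unlabeled forest shape, hence is a genuine forest-Wilf equivalence.

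The hard part will be verifying that $213$-avoidance is genuinely preserved under the full cascade of swaps, not just a single one: a priori, swapping two consecutive labels deep in one path could interact with the labels on a sibling subtree to create a $213$ elsewhere. The resolution I anticipate is a lemma showing that in a $\{213\}$-avoiding forest, the multiset of labels in any subtree forms a contiguous block-like structure relative to its ancestors (an "interval" condition of the kind that makes the Simion-Schmidt argument go through), so that exchanging two consecutive values is a purely local operation invisible to the rest of the forest. Once that lemma is in hand, the bijection and its inverse, and the preservation of the prefix pattern $\pi(1)\cdots\pi(k-3)$ and of $\{213\}$-avoidance, should all follow by the same bookkeeping as in Garg–Peng's proof of Theorem \ref{gptwist}, applied iteratively.
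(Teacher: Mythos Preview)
Your proposal rests on a false premise. You claim that when $v_{k-2},v_{k-1},v_k$ realize the suffix of $\pi$, the labels $L(v_{k-1})$ and $L(v_k)$ are ``consecutive'' so that ``no other labels lie strictly between them.'' But consecutiveness of the \emph{pattern values} $\pi(k-1),\pi(k)$ says nothing about the actual labels in an instance: any pair $L(v_{k-1})<L(v_k)$ with the right relative order to the rest of the path works, and there may be arbitrarily many forest labels strictly between them (including labels on sibling subtrees). Swapping $L(v_{k-1})$ and $L(v_k)$ is therefore not a local operation, and it can both create $213$ instances and interfere with other copies of $\pi$ or $\widetilde\pi$ elsewhere. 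The ``interval'' lemma you anticipate does not rescue this: $213$-avoidance does force the labels in certain subtrees to form intervals relative to the root, but it does not force the two particular labels you want to swap to be adjacent integers.

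The paper's argument is structurally quite different. It never swaps labels inside individual instances. Instead it exploits the standard decomposition of a $213$-avoiding tree into its \emph{large tree} (labels $\ge$ the root label) sitting on top of a \emph{small forest} (labels $<$ the root label), and observes that any instance of $\pi$ must cross this boundary at a \emph{checkpoint} of $\pi$ (an index $i$ where $\{\pi(1),\dots,\pi(i)\}$ are exactly the top values). One then assigns to each large-tree vertex $v$ a \emph{rank}: the furthest checkpoint reached on the path to $v$. A rank-additivity lemma shows that avoiding $\pi$ below $v$ is equivalent to avoiding the truncated pattern $\pi(a_r{+}1)\cdots\pi(k)$ in the small subforest hanging from $v$. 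The bijection $f_\pi$ is then defined \emph{recursively}: apply $f_\pi$ to the large tree, and to each small subforest apply $f_\sigma$ for the appropriate truncation $\sigma$. The recursion bottoms out at $\pi=123$, where one uses the full forest Simion--Schmidt maps $\alpha,\beta$ (shuffles/antishuffles) of Theorem~\ref{gpshuf213}, not a two-label swap. Rank preservation, not label adjacency, is what makes the induction close.
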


Garg and Peng also considered \emph{consecutive} pattern avoidance in forests. A sequence of distinct integers \emph{consecutively avoids} a pattern $\pi$ if it contains no consecutive subsequence that is in the same relative order as $\pi$. The analogues of Wilf equivalence and forest-Wilf equivalence, which we call \emph{c-Wilf equivalence} and \emph{c-forest-Wilf equivalence}, are defined in the same way using the consecutive notion of pattern avoidance instead.

The systematic study of consecutive pattern avoidance in permutations was started in 2003 by Elizalde and Noy in \cite{EN03}, where the number of permutations of $[n]$ avoiding $\pi$ was enumerated for short patterns $\pi$ using analytic techniques. Elizalde and Noy further developed these techniques in \cite{EN}, where they applied the Goulden-Jackson cluster method from \cite{GJ} to asymptotically enumerate more classes of permutations that avoid consecutive patterns. Garg and Peng adapted the cluster method to forests that avoid consecutive patterns in \cite{GP}, proving the following intriguing result.

\begin{thm}[{\cite[Theorem 1.4]{GP}}]
\label{gpcequiv}
The patterns $1324$ and $1423$ are c-forest-Wilf equivalent but not c-Wilf equivalent.
\end{thm}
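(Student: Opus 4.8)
The statement has two halves: $1324$ and $1423$ are not c-Wilf equivalent (a finite computation), and they are c-forest-Wilf equivalent (the substance, via the forest cluster method).

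For the first half it suffices to produce a single $n$ with $|\text{Av}^c_{1324}(n)| \ne |\text{Av}^c_{1423}(n)|$, where $\text{Av}^c_\pi(n)$ denotes the set of permutations of $[n]$ consecutively avoiding $\pi$. Neither pattern self-overlaps in three letters: the pattern of the first three entries is $132$ for both, while the pattern of the last three entries is $213$ for $1324$ and $312$ for $1423$. Hence, apart from the trivial length-$4$ cluster, the smallest clusters have length $6$ and consist of two occurrences overlapping in two entries, and the associated poset has $2$ linear extensions for $1324$ but only $1$ for $1423$. Feeding this into the Goulden--Jackson cluster formula for $\sum_n |\text{Av}^c_\pi(n)| \frac{x^n}{n!}$ then shows the two sequences first disagree at $n = 6$. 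This discrepancy is also the warning that the permutation argument will not transfer verbatim to forests.

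For the second half the plan is to run the forest cluster method of Garg and Peng. One first expresses $\sum_n f_n(\pi) \frac{x^n}{n!}$ through the forest cluster generating function of $\pi$, and then refines this into a finite system of functional equations for the exponential generating functions $F_\tau(x)$ of $\pi$-avoiding rooted trees organized by the order type $\tau$ of their topmost three vertices --- precisely the datum that decides whether prepending a new root creates a fresh occurrence --- with forests recovered as the exponential of the sum of the $F_\tau$ (plus the finitely many small-tree corrections). The two patterns yield \emph{different} systems, since the ``forbidden tail'' is the pattern $213$ for $1324$ but $312$ for $1423$; the crux is then to exhibit an explicit invertible change of variables among the $F_\tau$ carrying the $1324$-system onto the $1423$-system. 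Because $1324$ and $1423$ lie in different symmetry classes --- they are unrelated by reversal, complement, or inversion --- this substitution cannot be read off from any symmetry of the patterns and must be constructed directly; once it is available, $f_n(1324) = f_n(1423)$ for all $n$ follows at once, and combining with the first half gives the theorem.

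The main obstacle is precisely finding and verifying this change of variables, equivalently checking that the two forest cluster generating functions --- each assembled from its clusters together with the arbitrary $\pi$-avoiding subtrees that may sprout from every cluster vertex --- coincide to all orders. A term-by-term bijection of clusters is ruled out by the $n = 6$ computation above: the underlying (path-shaped) clusters are already genuinely different for the two patterns, so the equivalence is a bona fide generating-function cancellation in which the mismatch in cluster counts is exactly absorbed once the subtree contributions are summed. Making this cancellation manifest uniformly in $n$, rather than one degree at a time, is the technical heart of the proof.
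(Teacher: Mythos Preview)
Your proposal has a genuine gap rooted in a misconception about forest clusters. You argue that the $n=6$ path-cluster computation (2 linear extensions for $1324$ versus 1 for $1423$) rules out a cluster-level equality and forces the cancellation to occur only after ``subtree contributions are summed.'' But forest clusters are not restricted to paths: for a pattern of length 4, a 2-cluster of size 6 can also be the branching tree in which two instances share their first two vertices and then diverge. Counting those branching 2-clusters gives 2 for $1324$ and 3 for $1423$, so the total forest 2-cluster count at size 6 is $2+2=4$ for $1324$ and $1+3=4$ for $1423$. The forest cluster numbers \emph{do} match, and this is exactly how Garg and Peng prove the theorem: they show that the forest cluster numbers $r_{m,n}$ for $1324$ and $1423$ satisfy the same recursion, and then invoke the equivalence between equal cluster numbers and strong c-forest-Wilf equivalence (Theorem~\ref{gpfclust}). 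No change of variables among generating functions of avoiding trees is needed, and no cancellation against subtree contributions occurs.

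Your alternative plan---a system of functional equations for the $F_\tau$ indexed by the top-three order type, matched by an explicit substitution---is not obviously wrong in principle, but you do not produce the substitution, and you yourself flag that as the main obstacle. More importantly, the motivation you give for needing it (that cluster counts disagree) is incorrect, so the plan is solving a harder problem than the one actually posed. The correct route is to work directly with forest clusters, allow branching, and show the recursion for $r_{m,n}$ is the same for both patterns; this is what the cited proof does.
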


In contrast, for the classical case all known forest-Wilf equivalences are also Wilf equivalences, and Garg and Peng even conjectured in \cite[Conjecture 7.3]{GP} that forest-Wilf equivalence implies Wilf equivalence. Garg and Peng also asked whether there exist more nontrivial single pattern c-forest-Wilf equivalences after checking all patterns of length at most $5$ with a computer and finding no more \cite{GPp}. We answer their question in the affirmative by exhibiting a rich infinite family of c-forest-Wilf equivalences. Not only are there more nontrivial c-forest-Wilf equivalences, but a relatively large proportion of patterns satisfy such an equivalence. Furthermore, there are exponentially large c-forest-Wilf equivalence classes. In the process of our proof, we somewhat elucidate what properties of forests allow for such c-forest-Wilf equivalences between patterns that are not c-Wilf equivalent as in Theorem \ref{gpcequiv}. Our construction is rather long and is detailed in Section \ref{cons}, so for now we summarize its key properties in the following theorems.

\begin{thm}
\label{cclass}
For all $n$, there exists a c-forest-Wilf equivalence class containing at least $2^{n-4}$ patterns of length $n$.
\end{thm}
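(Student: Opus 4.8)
The goal is to exhibit, for every $n$, a c-forest-Wilf equivalence class of patterns of length $n$ with at least $2^{n-4}$ members. The statement is clearly meant to follow from the "rich infinite family of c-forest-Wilf equivalences" alluded to in the introduction, so the plan is to first isolate a local move on patterns — an analogue of the twist operation, but adapted to the consecutive setting — that is guaranteed to preserve $f_n$ for consecutively-avoiding forests, and then show that a suitably chosen base pattern can have this move applied independently in roughly $n-4$ positions, generating a class of size $2^{n-4}$.

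First I would set up the machinery: the forest analogue of the Goulden–Jackson cluster method (as adapted by Garg and Peng), which expresses the generating function for consecutively-$\pi$-avoiding forests in terms of the cluster numbers of $\pi$ — the weighted counts of linear arrangements of overlapping copies of $\pi$. The key reduction is that two patterns are c-forest-Wilf equivalent as soon as they have the same cluster generating function (in the appropriate multivariate form that records where overlaps occur, since forests branch). So the combinatorial heart is to find an operation on $\pi$ that leaves every cluster number unchanged. Following the spirit of Theorem 1.1 and Theorem 1.4 (where swapping the last two entries, or a more global rearrangement, is invisible to the relevant structures), I expect the right move is something like: if $\pi$ has a certain "rigid" prefix that controls all self-overlaps, then the tail of $\pi$ beyond the longest proper self-overlap can be permuted within an order-isomorphism class without affecting any cluster. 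Concretely, one identifies positions $i$ such that no nontrivial overlap of $\pi$ with itself involves the pair of entries at positions $i, i+1$, and shows that swapping those two entries (when they form a descent-or-ascent that is locally free) is a c-forest-Wilf equivalence.

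Second, with such a move in hand, I would construct an explicit base pattern $\pi^*$ of length $n$ — plausibly something built around a core like $\cdots$ that admits no long self-overlaps (so that the cluster poset is as simple as possible, e.g. only the trivial cluster contributes) — for which $n-4$ disjoint local swaps are simultaneously available and act independently. Disjointness of the swapped position-pairs gives $2^{n-4}$ distinct patterns, and the independence (each swap individually, hence any composition, being a c-forest-Wilf equivalence) places all of them in one equivalence class. Care is needed to confirm the $2^{n-4}$ resulting patterns are genuinely distinct as patterns, which should follow from choosing the base pattern so that each swappable pair is an ascent that becomes a descent (or vice versa) under the move. The constant $4$ presumably comes from reserving a few boundary positions (the first two and last two, say) to pin down the pattern and prevent unwanted overlaps.

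The main obstacle is the second ingredient — the cluster invariance of the local move in the \emph{forest} setting rather than the permutation setting. In permutations a cluster is a single linear overlap structure, but for forests the cluster method runs over cluster \emph{posets} (linear extensions thereof), so one must check that the swap induces a weight- and structure-preserving bijection on linear extensions of every forest cluster poset of $\pi$, not just on the linear clusters. This is exactly the phenomenon that makes $1324 \sim 1423$ a c-forest- but not c-Wilf equivalence (Theorem 1.4), so the proof must leverage precisely this extra flexibility: the swap need not preserve the linear cluster numbers, only the forest cluster numbers. Pinning down the correct "locally free" condition so that this forest-level bijection exists, and verifying it is compatible with composing many disjoint swaps, is where the real work lies; the counting argument giving $2^{n-4}$ is then routine bookkeeping.
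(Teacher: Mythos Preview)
Your plan has the right high-level architecture (cluster method $\Rightarrow$ local move $\Rightarrow$ count), but the specific mechanism you propose cannot reach $2^{n-4}$, and it is not the mechanism the paper uses.

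You frame the move as ``swap the entries at adjacent positions $i,i{+}1$ whenever that pair is free of overlaps,'' and then ask for $n-4$ \emph{disjoint} such swaps. But disjoint adjacent-position pairs in a word of length $n$ number at most $\lfloor n/2\rfloor$, so this route caps out at $2^{\lfloor n/2\rfloor}$, not $2^{n-4}$. More fundamentally, the ``free of self-overlap'' heuristic is a permutation-cluster idea: it is exactly the sort of condition that controls c-Wilf equivalence for permutations (cf.\ nonoverlapping patterns). The equivalences produced here are explicitly \emph{not} c-Wilf equivalences for permutations, so overlap structure alone cannot be the invariant.

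What the paper actually does is different in kind. It introduces \emph{grounded} permutations (streak $=$ max height) and a standard decomposition of forest clusters into primitive pseudo-clusters. The operative move is not an adjacent swap but the following: if a block of values $\{x,\ldots,y\}$ occupies \emph{non-adjacent} positions all of the same \emph{height}, with $x{-}1$ and $y{+}1$ appearing earlier, then the relative order of that block may be replaced by any permutation in its \emph{primitive structure equivalence} class (Lemma~\ref{swaplem}). The crucial supply of such equivalent orders is that all $2^{k-1}$ classically $\{213,231\}$-avoiding permutations of length $k$ are primitive structure equivalent (Lemma~\ref{pselem}). The explicit class is then built by interleaving two such blocks of size roughly $(n-3)/2$ each, prefixed by $1,2,m$ to make the pattern grounded; each block contributes a factor of about $2^{(n-3)/2-1}$, and complementation doubles the count to $2^{n-4}$. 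So the exponent $n-4$ comes from \emph{two} multiplicative factors of size $\approx 2^{n/2}$, not from $n-4$ independent binary toggles. Your proposal is missing the height/grounded framework and the block-replacement (rather than adjacent-swap) move; without those, the forest-cluster bijection you allude to has no concrete definition and the arithmetic does not close.
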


\begin{thm}
\label{cequiv}
For any constant $c<1$, there exists an integer $n_c$ such that for all $n>n_c$, at least $c^nn!$ patterns of length $n$ satisfy a nontrivial c-forest-Wilf equivalence.
\end{thm}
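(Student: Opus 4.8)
The plan is to obtain Theorem~\ref{cequiv} as a counting consequence of the construction carried out in Section~\ref{cons}. That construction takes a pattern $\pi$ satisfying an explicit local structural condition and produces a companion pattern $\pi'\neq\pi$ with $\pi$ and $\pi'$ c-forest-Wilf equivalent; moreover, as in the proof of Theorem~\ref{cclass} (and as already visible in the example $1324\sim 1423$ of Theorem~\ref{gpcequiv}, whose complement $4231$ is not $1423$), the equivalence produced is nontrivial, i.e.\ not the complement equivalence. Granting this, let $\mathcal G_n$ be the set of patterns of length $n$ to which the construction applies; every element of $\mathcal G_n$ satisfies a nontrivial c-forest-Wilf equivalence, so it suffices to prove $|\mathcal G_n|\ge c^n n!$ for all $n$ beyond some threshold $n_c$. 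Since there are $n!$ patterns of length $n$ in all, this is the assertion that a $(1-o(1))^n$ fraction of all patterns lies in $\mathcal G_n$.

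For the count I would use a block-independence estimate. The condition defining $\mathcal G_n$ is local: there is a fixed window length $L$ and a fixed nonempty family $\mathcal T$ of length-$L$ patterns such that $\pi\in\mathcal G_n$ as soon as the entries of $\pi$ in some block of $L$ consecutive positions, read and standardized, give an element of $\mathcal T$. Partition $1,\dots,n$ into $\lfloor n/L\rfloor$ disjoint blocks of $L$ consecutive positions. For $\pi$ chosen uniformly at random, the standardizations of the entries in these blocks are independent and each uniform over the $L!$ patterns of length $L$: sampling a uniform $\pi$ is the same as choosing the set of values assigned to each block and then, independently in each block, a uniform bijection from that block's positions to its values, and the standardization of a block depends only on its own bijection. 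Hence, with $q:=|\mathcal T|/L!\in(0,1]$, the probability that no block has pattern in $\mathcal T$ is $(1-q)^{\lfloor n/L\rfloor}$, so $|\mathcal G_n|\ge\bigl(1-(1-q)^{\lfloor n/L\rfloor}\bigr)n!$. Now $c^n$ and $(1-q)^{\lfloor n/L\rfloor}$ are both nonincreasing in $n$ and tend to $0$, so their sum is eventually $<1$ and stays $<1$; taking $n_c$ to be a point past which $c^n+(1-q)^{\lfloor n/L\rfloor}<1$ yields $|\mathcal G_n|\ge c^n n!$ for all $n>n_c$, as required.

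I expect the real work to be entirely on the input side rather than in this estimate: the main obstacle is to isolate from Section~\ref{cons} a clean sufficient condition for a pattern to carry a nontrivial c-forest-Wilf equivalence and to confirm that it is genuinely window-detectable in the above sense — that membership in $\mathcal G_n$ is unaffected by the part of $\pi$ outside the distinguished window, and that the resulting equivalence never degenerates to the complement equivalence. Should the construction instead require the distinguished configuration to sit at a prescribed location (say within the last $L$ positions) rather than anywhere, the counting only becomes easier: a single block then already exhibits a fixed positive fraction $q$ of patterns in $\mathcal G_n$, and $q\ge c^n$ for $n$ large gives the bound immediately. In either case the substance of Theorem~\ref{cequiv} is carried by the construction of Section~\ref{cons}, and the passage to the stated bound is the elementary computation sketched here.
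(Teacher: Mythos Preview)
Your proposal has a genuine gap at exactly the point you flag as the ``main obstacle'': the condition defining $\mathcal G_n$ is \emph{not} window-detectable, and no choice of window length $L$ and template set $\mathcal T$ will repair it. The construction of Section~\ref{cons} only produces a c-forest-Wilf equivalence for \emph{grounded} patterns (Definition~\ref{groundef}): $\pi$ must begin with $1,2,\ldots,s$ and then \emph{the entire tail} $\pi(2)\cdots\pi(k)$ must avoid the consecutive pattern $1\cdots(s{+}1)$. This is a global avoidance condition on all of $\pi$, not a property of a single block; a pattern can have your favourite local configuration in one window and still fail to be grounded because of a long increasing run elsewhere. Your block-independence estimate would in fact show that a $1-o(1)$ fraction of patterns lie in $\mathcal G_n$, which is strictly stronger than the theorem's $(1-o(1))^n$ fraction---and that discrepancy is a signal that the hypothesis ``window-detectable'' is too optimistic.

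The paper's proof instead works as follows. Fix $c<1$, choose $k$ large enough that the fraction of permutations of $[m]$ consecutively avoiding $1\cdots k$ exceeds $(c+\epsilon)^m$ for large $m$ (Lemma~\ref{lotsconsavoid}), and further arrange that $3$ precedes $1$ and $2$ with no two of $1,2,3$ adjacent (Lemma~\ref{lotscanswap}, costing only a constant factor). For each such $\sigma$ of length $n-k-1$, form $\pi=1,\ldots,k,n,\sigma+k$; this $\pi$ is grounded with streak $k$ by construction, and the placement of $k{+}1,k{+}2,k{+}3$ lets Lemmas~\ref{boost} and~\ref{swaplem} swap $k{+}1$ and $k{+}2$ to produce a nontrivially equivalent $\pi'$. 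The freedom to let $k$ depend on $c$ is essential: for any fixed $k$ the grounded patterns form only a $c_k^n$ fraction with $c_k<1$, but $c_k\to1$ as $k\to\infty$, which is how one reaches every $c<1$. Your fixed-$L$ scheme has no analogue of this parameter.
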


In 2018, Dwyer and Elizalde introduced in \cite{DE} the notion of \emph{super-strong c-Wilf equivalence}, refining c-Wilf equivalence for permutations. Roughly speaking, two patterns $\pi$ and $\pi'$ of length $k$ are said to be super-strongly c-Wilf equivalent if for all $n$ and $S\subseteq[n-k+1]$, the number of permutations of $[n]$ whose occurrences of $\pi$ are exactly indexed by $S$ is equal to the number of permutations of $[n]$ whose occurrences of $\pi'$ are exactly indexed by $S$. For example, in the permutation $124365$, the occurrences of the pattern $132$ are indexed by the set $\{2,4\}$, corresponding to the substrings $243$ and $365$ that begin at indices $2$ and $4$. Dwyer and Elizalde show that among a special class of permutations, the \emph{nonoverlapping permutations}, c-Wilf equivalence is equivalent to super-strong c-Wilf equivalence. By the work of Lee and Sah in 2018 in \cite{LS}, c-Wilf equivalence among nonoverlapping patterns is completely understood and determined by the first and last element. Bóna has shown that an asymptotically constant proportion of permutations of $[n]$ are nonoverlapping in \cite{B}, yielding a large number of nontrivial super-strong c-Wilf equivalences. By adapting the techniques for enumerating linear extensions of cluster posets from \cite{DE, LS}, we show that the complete opposite is true for the forest analogue.

\begin{thm}
\label{fssequ}
Any super-strong c-forest-Wilf equivalence between two patterns is trivial.
\end{thm}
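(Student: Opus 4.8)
The plan is to carry out, in the forest setting, the cluster-poset analysis used by Dwyer--Elizalde and Lee--Sah for permutations. First I would record the forest version of the equivalence underlying super-strong phenomena: combining the forest cluster method of Garg and Peng with inclusion--exclusion, $\pi$ and $\pi'$ are super-strong c-forest-Wilf equivalent if and only if, for every \emph{forest cluster shape} $C$ --- a rooted forest exhibited as a connected union of distinguished length-$k$ root-to-leaf windows whose union is all of $C$ --- the number of labelings of $C$ by $[|C|]$ in which every distinguished window reads $\pi$ equals the corresponding number for $\pi'$. Since the constraints imposed by the windows on a realizable cluster generate a partial order on its vertices, this number is the number of linear extensions of the associated \emph{forest cluster poset}, so it suffices to show that the collection of all such linear-extension counts determines $\pi$ up to complement.

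The second step is a structural observation that dictates the shape of forest clusters: in a rooted forest, two root-to-leaf windows that share a vertex share the whole path above it, so any two overlapping windows are either colinear --- lying on a common path --- or overlap along a common \emph{top segment} and then branch into distinct subtrees. Thus the colinear clusters are exactly the permutation clusters, and super-strong c-forest-Wilf equivalence already forces $\pi$ and $\pi'$ to be super-strongly c-Wilf equivalent as permutations; but forests also admit genuinely new \emph{branching} clusters. The prototype is the $(q,t)$-spider: glue $t$ copies of $\pi$ along a common top segment of length $q$. I would show its number of linear extensions equals $\prod_{\ell=0}^{q}\frac{(tg_\ell)!}{(g_\ell!)^{t}}$, where the \emph{gap vector} $(g_0,\dots,g_q)$ records how many of $\pi(q+1),\dots,\pi(k)$ land in each of the $q+1$ value-intervals cut out by $\pi(1),\dots,\pi(q)$ --- the values on the shared segment are forced, and one then distributes the remaining values interval by interval. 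Taking $q=1$, $t=2$ gives $\binom{2(\pi(1)-1)}{\pi(1)-1}\binom{2(k-\pi(1))}{k-\pi(1)}$, which is strictly unimodal in $\pi(1)$ and invariant under $\pi(1)\mapsto k+1-\pi(1)$, hence determines $\pi(1)$ up to complement; replacing $\pi'$ by $\overline{\pi'}$ if needed, we may assume $\pi(1)=\pi'(1)$ and it remains to prove $\pi=\pi'$.

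The third step reconstructs the remaining entries, and this is the heart of the matter. Letting $t$ vary in the $(q,t)$-spider recovers, for each $q$, the \emph{multiset} $\{g_0,\dots,g_q\}$ (one checks that the relevant products of central multinomial coefficients separate multisets of the sizes that can occur). Combined with the permutation-level super-strong data this already settles many patterns, but not all of them, because symmetric clusters only see unordered gap data and there are inequivalent patterns with the same gap multiset at every $q$. To break the remaining ties I would introduce asymmetric branching clusters --- glue two copies of $\pi$ along a top segment of prescribed length and extend one arm by a further colinearly overlapping copy of $\pi$ when $\pi$'s self-overlaps allow, or hang a short branch off a controlled prefix while pinning down the long arm through colinear overlaps --- and compute their linear-extension counts, thereby upgrading the unordered multisets to the ordered gap vectors $(g_0,\dots,g_q)$ for all $q$ and recovering $\pi$ entry by entry. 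With $\pi'=\pi$ established, the original equivalence was between $\pi$ and $\overline\pi$, i.e. trivial.

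The main obstacle, and where essentially all the difficulty lies, is this last rigidity step: pinning down a concrete family of forest cluster posets that separates $\pi$ from every non-complement $\pi'$. It is delicate because (i) the target subsumes the permutation super-strong c-Wilf equivalences, which are themselves not fully classified, so one cannot argue through the colinear clusters alone; (ii) one must ensure that each asymmetric cluster invoked is actually realizable --- its pattern constraints must be mutually consistent --- and that its linear-extension count genuinely disentangles the ordered gap structure; and (iii) organizing the reconstruction cleanly, ideally by an induction that strips the first or last entry of $\pi$, is complicated by the way deleting an entry interacts with how copies of $\pi$ overlap inside a cluster.
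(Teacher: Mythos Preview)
Your framework is sound and matches the paper through the reduction to linear-extension counts of forest cluster posets. The $(q,t)$-spider computation is correct, and the $q=1$, $t=2$ case recovers $\pi(1)$ up to complement (the paper instead cites Garg--Peng's strong-equivalence constraint for this, but your direct argument is fine). The gap is exactly where you locate it, in step three, and it is genuine: both asymmetric constructions you propose depend on colinear self-overlaps of $\pi$ --- ``extend one arm by a further colinearly overlapping copy'' and ``pinning down the long arm through colinear overlaps'' --- and for nonoverlapping patterns (a positive fraction of all patterns of each length) no such overlaps exist, so neither construction is available. Your gap-vector multisets, as you acknowledge, do not suffice on their own, and without a uniform asymmetric family the reconstruction of $\pi$ stalls precisely in the cases you cannot afford to lose.

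The paper's resolution is simpler than what you sketch and departs in spirit from the asymptotic Dwyer--Elizalde/Lee--Sah template. Fix $i>1$ and let $C_n$ be the cluster with one instance $I$ from the root together with $n$ further instances $J_1,\dots,J_n$ whose \emph{first} vertex is the $i$th vertex of $I$ and which are otherwise disjoint from $I$ and from each other. This cluster exists for every $\pi$, independent of overlap structure. Its poset is $n+1$ length-$k$ chains meeting at a single vertex that is the $\pi(i)$th element of one chain and the $\pi(1)$th of the other $n$; with $q=\pi(1)$ and $p=\pi(i)$ the extension count, up to a factor independent of $p$, is $\binom{(q-1)n+p-1}{p-1}\binom{(k-q)n+k-p}{k-p}$. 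The key idea --- absent from your proposal --- is to treat this as a \emph{polynomial identity} in $n$ rather than to compare growth rates or finitely many values: equating with the $\pi'$ count and reading off the roots (which lie at $-j/(q-1)$ and $-j/(k-q)$) forces $\pi(i)=\pi'(i)$ outright whenever $2\pi(1)\ne k+1$. The residual self-complementary-first-entry case $\pi(1)=(k+1)/2$ is handled by adjoining one more branch at a second position $j$ and a further algebraic manipulation (dividing the resulting polynomial identity by $x+1$ and evaluating at $x=-1$). This polynomial-root method sidesteps entirely the realizability concerns you flag in (ii) and never passes through the unresolved permutation classification in (i).
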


Garg and Peng also conjectured that an analogue of Stanley-Wilf limits exist for sets $S$ of patterns in forests. We prove this conjecture for a wide class of sets $S$ which includes all singleton sets in a companion paper \cite{Re}.

The rest of the paper is organized as follows. In Section \ref{def}, we record all of the preliminary definitions that are necessary for the rest of the paper. In Section \ref{clas}, we consider classical forest-Wilf equivalences, proving Conjecture \ref{gp71} and Theorem \ref{213twist}. In Section \ref{cons}, we examine c-forest-Wilf equivalences, first defining the \emph{grounded permutations} that form the basis of our proofs to Theorems \ref{cclass} and \ref{cequiv}. We then find constraints on strong c-Wilf equivalences between grounded permutations along the lines of \cite[Theorem 1.5]{GP} and prove Theorem \ref{fssequ}, both by enumerating clusters.

\section{Definitions and Notations}
\label{def}
We begin by defining all of the notions of pattern avoidance and rooted forests that we will use throughout this paper.

\begin{defn}
\label{rfdef}
A \emph{rooted labeled forest} on a set $S$ of integers is a forest on $|S|$ vertices labeled with the elements of $S$ in which every connected component has a distinguished root vertex. Each component then has the structure of an unordered rooted tree, and each vertex has a unique label in $S$. Here, by unordered we mean that the tree only has the structure of a graph with a distinguished vertex, and the order in which the neighbors of any vertex are drawn is irrelevant. In a rooted forest $F$ on $S$, we let $L_F(v)$ denote the label of vertex $v$ and suppress the subscript if it is clear from context.
\end{defn}

The same definition can be made for rooted labeled trees. For the sake of brevity, we will oftentimes refer to rooted labeled trees and forests as trees and forests, respectively, and we will always specify when we refer to other types of trees or forests.

We will make use of some standard terminology for rooted trees and forests. In a rooted tree, the \emph{root} is the distinguished vertex. For each non-root vertex $v$, the \emph{parent} of $v$ is the vertex directly before $v$ in the path from the root to $v$, and every non-root vertex is a \emph{child} of its parent. The \emph{ancestors} of a vertex $v$ are the vertices on the path from the root to $v$, and every vertex is a \emph{descendant} of its ancestors. A \emph{strict} descendant of $v$ is a descendant of $v$ that is not equal to $v$, and similarly a strict ancestor of $v$ is an ancestor of $v$ that is not equal to $v$. Each vertex $v$ in the tree has a \emph{depth}, defined as the number of vertices on the path from the root to $v$. For example, the root has depth $1$. The \emph{depth} of a rooted tree $T$ is the maximal depth of a vertex in $T$. A \emph{leaf} is a vertex with no children. A \emph{subtree} of a rooted tree $T$ is a connected induced subgraph of the $T$, which can be viewed as a rooted tree whose root is the vertex of minimal depth in $T$. The \emph{subtree rooted at $v$} is the subtree consisting of all of the descendants of $v$. We may sometimes add modifiers to this term to talk about different subtrees with root $v$. For example, the subtree of $T$ rooted at $v$ consisting of the non-leaf descendants of $v$ refers to the induced subgraph of $T$ on the descendants of $v$ that are not leaves of $T$, which can be viewed as a rooted tree with root $v$.

All of these terms naturally carry over to rooted forests. When we draw rooted forests, we will connect the roots of each connected component to an extra unlabeled vertex and refer to this vertex as the \emph{root} of the forest. The root of the forest can be thought of as the parent of the roots of its connected components, though it is only drawn for visualization purposes and is not actually in the forest or counted when computing the depth of a vertex. In our drawings of rooted forests and trees, the root will be drawn at the top and each vertex will be drawn above its children. In this way, the path from a vertex to any of its descendants is a downward path, and we can refer to vertices as being \emph{higher} or \emph{lower} than other vertices along such a path. For example, the lowest vertex on a given path in the rooted forest is the vertex in the path of maximal depth.

\begin{figure}[ht]
\centering
\scalebox{0.7}
{\forestset{filled circle/.style={
      circle,
      text width=4pt,
      fill,
    },}
\begin{forest}
for tree={filled circle, inner sep = 0pt, outer sep = 0 pt, s sep = 1 cm}
[, 
    [, edge label={node[left]{5}}
        [, edge label={node[left]{3}}
            [, edge label={node[left]{10}}
                [, edge label={node[left]{1}}
                ]
            ]
            [, edge label={node[right]{2}}
            ]
        ]
        [, edge label={node[right]{4}}
        ]
        [, edge label={node[right]{7}}
        ]
    ]
    [, edge label={node[right]{8}}
    ]
    [, edge label={node[right]{12}}
        [, edge label={node[left]{6}}
            [, edge label={node[left]{11}}
            ]
        ]
        [, edge label={node[left]{9}}
        ]
    ]
]
\end{forest}}
\caption{A rooted labeled forest on $[12]$. The root vertex is drawn but not in the forest. We generally draw forests so that the labels of the children of every vertex are sorted in increasing order.}
\label{rlbfig}
\end{figure}
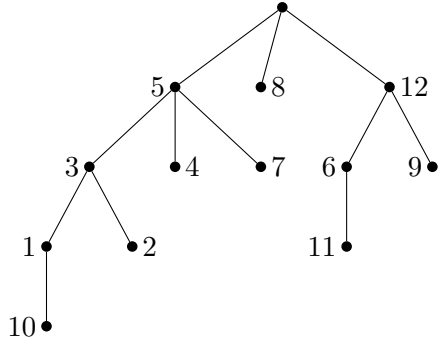

We view rooted trees as trees with a distinguished vertex and rooted forests as a set of rooted trees. Thus, forests may be empty (have $0$ vertices), but trees will always be nonempty. An \emph{increasing} forest is a rooted labeled forest in which every vertex has a smaller label than its children so that the sequence of labels along any downward path in the forest is increasing. The same definition extends to trees, and we can define decreasing forests and trees analogously.

\begin{defn}
\label{instdef}
An \emph{instance} of a pattern $\pi=\pi(1)\cdots\pi(k)$ in a rooted forest $F$ is a sequence of vertices $v_1,\ldots,v_k$ such that $v_i$ is an ancestor of $v_{i+1}$ for all $1\le i<k$ and $L(v_1),\ldots,L(v_k)$ is in the same relative order as $\pi$. We define a \emph{consecutive instance} in the same way, except we require that $v_i$ is a parent of $v_{i+1}$ instead of an ancestor so that $v_1,\ldots,v_k$ forms a downward path in the forest.
\end{defn}

\begin{defn}
\label{avodef}
A forest $F$ \emph{\emph{(}consecutively\emph{)} contains} a pattern $\pi$ if there exists a (consecutive) instance of $\pi$ in $F$, and it \emph{\emph{(}consecutively\emph{)} avoids} a set $S$ of patterns if it does not contain any (consecutive) instance of $\pi$ for all $\pi\in S$.
\end{defn}

We will oftentimes drop the braces when referring to containing or avoiding a specific singleton set $S$. The word classically may be used to describe non-consecutive avoidance or containment, so to classically avoid a set is to avoid a set as in Definition \ref{avodef}. For example, the forest in Figure \ref{rlbfig} contains $321$ through the instance $5,3,1$, and it consecutively contains $312$ through the consecutive instance $12,6,11$. It avoids $123$ and consecutively avoids $321$, but it does not classically avoid $321$. A forest $F$ avoiding a set $S$ of patterns can be viewed as having the property that for every path from the root of $F$ to a leaf of $F$, the sequence of labels avoids $S$ in the sense of pattern avoidance for permutations and sequences. In Sections \ref{clas} and \ref{cons}, we exclusively work with one of non-consecutive and consecutive avoidance, so in those sections and in general when the context is clear, we omit the word consecutive.

\begin{defn}
\label{fwdef}
Two sets $S$ and $S'$ of patterns are \emph{\emph{(}c-\emph{)}forest-Wilf equivalent} if for all $n\ge0$, the number of forests on $[n]$ (consecutively) avoiding $S$ is equal to the number of forests on $[n]$ (consecutively) avoiding $S'$. Two patterns $\pi$ and $\pi'$ are \emph{strongly c-forest-Wilf equivalent} if for all $m,n\ge0$, the number of forests on $[n]$ with exactly $m$ consecutive instances of $\pi$ is equal to the number of forests on $[n]$ with exactly $m$ consecutive instances of $\pi'$.
\end{defn}

As before, we will never work with forest-Wilf equivalence and c-forest-Wilf equivalence in the same section, so for the sake of brevity we will often refer to these concepts as equivalence when the context is clear. However, we will never omit the word strong(ly) or super-strong(ly). (We will define super-strong c-forest-Wilf equivalence in Section \ref{cons}.) We will respectively denote by $S\sim S'$, $S\stackrel c\sim S'$, $S\stackrel{sc}\sim S'$, and $S\stackrel{ssc}\sim S'$ the forest-Wilf equivalence, c-forest-Wilf equivalence, strong c-forest-Wilf equivalence, and super-strong c-forest-Wilf equivalence of two sets $S$ and $S'$ of patterns. We also extend this notation to $\pi\sim \pi'$, $\pi\stackrel c\sim \pi'$, $\pi\stackrel{sc}\sim \pi'$, and $\pi\stackrel{ssc}\sim \pi'$ for equivalences between two individual patterns $\pi$ and $\pi'$.

Recall from Section \ref{intro} that the complement $\overline\pi$ and twist $\widetilde\pi$ of a pattern $\pi=\pi(1)\cdots\pi(k)$ are $k+1-\pi(1),\ldots,k+1-\pi(k)$ and $\pi(1),\ldots,\pi(k-2),\pi(k),\pi(k-1)$, respectively. In other words, the complement is obtained by inverting the order of the elements and the twist is obtained by switching the last two elements. As noted in \cite[Proposition 1]{AA}, given a rooted forest $F$ on $[n]$, we may consider the rooted forest $\overline F$ defined as follows: the underlying unlabeled forest structure will be the same, but $L_{\overline F}(v)=n+1-L_F(v)$ for all vertices $v$. In other words, for all $a\in[n]$, we switch the labels $a$ and $n+1-a$. Note that any instance of a pattern $\pi$ in $F$ will become an instance of $\overline\pi$ in $\overline F$ under this relabelling, so we automatically have that $\{\pi_1,\ldots,\pi_m\}$ and $\{\overline\pi_1,\ldots,\overline\pi_m\}$ are equivalent in all senses described before, by complementation of forest labels and patterns. Such an equivalence between $\pi$ and $\pi'$ where $\pi=\pi'$ or $\pi'=\overline\pi$ is \emph{trivial}.

\section{Classical Wilf Equivalences in Forests}
\label{clas}
In this section we focus on classical forest-Wilf equivalences, the goal being to prove Conjecture \ref{gp71} along with its generalizations. We will consider parts (i) and (ii) in Subsection \ref{gp71i} and part (iii) in Subsection \ref{gp71iii}. The general strategy in both cases is to make use of the \emph{forest Simion-Schmidt bijection}, a forest analogue of the classical bijection between permutations avoiding 123 and 132 given by Simion and Schmidt in 1985 in \cite{SS}. An analogue of this bijection for posets arose as a special case of the work \cite{HW} of Hopkins and Weiler on pattern avoidance in posets. It was first formulated specifically for rooted labeled forests by Anders and Archer in \cite{AA}. Garg and Peng used a variant of this bijection in \cite{GP} to prove Theorem \ref{gptwist}, introducing the \emph{shuffle} and \emph{antishuffle} maps on forests. We first recall a special case of their bijection and refer the reader to \cite[Section 4.1]{GP} for the proofs of the following statements.

\begin{defn}
\label{tdmdef}
In a rooted forest on $[n]$, a \emph{top-down minimum} is a vertex $v$ such that $L(u)\ge L(v)$ for all ancestors $u$ of $v$.
\end{defn}

Note that a forest avoids $123$ if and only if for all vertices $v$ of the forest that are not top-down minima, $L(v)\ge L(w)$ for all descendants $w$ of $v$. Similarly, a forest avoids $132$ if and only if for all vertices $v$ of the forest that are not top-down minima, if $u$ is the lowest ancestor of $v$ that is a top-down minimum, then $L(v)\le L(w)$ or $L(u)\ge L(w)$ for all descendants $w$ of $v$. Figure \ref{123132fig} shows an example of a $123$-avoiding forest and a $132$-avoiding forest. 

\begin{defn}
\label{shufdef}
Suppose that $v$ is a vertex in a labeled forest on $[n]$ that is not a top-down minimum. The \emph{shuffle} and \emph{antishuffle} operations on $v$ are permutations of the labels among the descendants of $v$ such that the relative order of the labels among the strict descendants of $v$ is preserved. This is accomplished by replacing the label of $v$ with the label of one of its descendants and relabelling the strict descendants of $v$ to preserve the relative order. For a shuffle, we relabel $v$ with the maximum value of $L(w)$ among descendants $w$ of $v$. For an antishuffle, we relabel $v$ with the minimum value of $L(w)$ among descendants $w$ of $v$ that is greater than $L(u)$, where $u$ is the lowest ancestor of $v$ that is a top-down minimum.
\end{defn}

Note that shuffles fix the labels and structure of a $123$-avoiding forest and antishuffles fix the labels and structure of a $132$-avoiding forest. The operation $\alpha$ on a $132$-avoiding forest $F$ on $[n]$ is defined as follows: shuffle the vertices of $F$ that are not top-down minima one by one, ordering the vertices by depth from highest to lowest and breaking ties arbitrarily. In this way, we shuffle some leaf of the tree first and the root of the tree last, and we never change which vertices in the tree are top-down minima. The operation $\beta$ on a $123$-avoiding forest $F$ on $[n]$ is defined as follows: antishuffle the vertices of $F$ that are not top-down minima one by one, ordering the vertices by depth from lowest to highest and breaking ties arbitrarily. In this way, we antishuffle the root of the tree first and some leaf of the tree last. The order in which we shuffle or antishuffle vertices of some fixed depth does not matter, as their subtrees do not interact with each other when applying the shuffles or antishuffles. Thus, $\alpha$ and $\beta$ are well-defined. We note that $\alpha$ maps $132$-avoiding forests to $123$-avoiding forests and $\beta$ maps $123$-avoiding forests to $132$-avoiding forests. Furthermore, $\alpha$ and $\beta$ are inverses. In fact, the corresponding steps of $\alpha$ and $\beta$ are all inverses. By this, we mean that if in a step of $\alpha$ we shuffle a vertex $v$ so that the subtree $T$ rooted at $v$ yields a tree $T'$ that only differs from $T$ by labels, the corresponding step of $\beta$ that antishuffles $v$ is applied to $T'$ and results in $T$, and vice versa. The maps $\alpha$ and $\beta$ are also clearly structure-preserving, in that they only permute the labels of the underlying rooted forest. Figure \ref{123132fig} shows an example of an application of $\alpha$ and $\beta$ with three shuffles or antishuffles. This case is particularly simple because the only nontrivial shuffles and antishuffles are at the red, blue, and green vertices, and shuffles and antishuffles applied to other vertices do not change the forest.

\begin{figure}[ht]
\centering
\scalebox{0.7}
{\forestset{filled circle/.style={
      circle,
      text width=4pt,
      fill,
    },}
\begin{minipage}[b]{0.4\linewidth}
\centering
\begin{forest}
for tree={filled circle, inner sep = 0pt, outer sep = 0 pt, s sep = 1 cm}
[,
    [, draw, fill=white, edge label={node[left]{7}}
        [, draw, fill=white, edge label={node[right]{2}}
            [, fill=blue, edge label={node[right]{8}}, name=left8
                [, edge label={node[below]{3}}, name=left3
                ]
                [, edge label={node[below]{4}}, name=left4
                ]
            ]
        ]
        [, fill=red, edge label={node[left]{12}}, name=left12
            [, draw, fill=white, edge label={node[left]{5}}
                [, edge label={node[left]{11}}, name=left11
                ]
            ]
            [, edge label={node[left]{10}}, name=left10
                [, draw, fill=white, edge label={node[left]{1}}
                    [, edge label={node[below]{6}}, name=left6
                    ]
                    [, edge label={node[right]{9}}, name=left9
                    ]
                ]
            ]
        ]
    ]
    [, draw, fill=white, edge label={node[right]{13}}
    ]
]
\draw[->,blue] (left3) to[out=90,in=180] (left8);
\draw[->,blue] (left8) to[out=-90,in=135] (left4);
\draw[->,blue] (left4) to (left3);
\draw[->,red] (left12) to (left11);
\draw[->,red] (left11) to (left10);
\draw[->,red] (left10) to (left9);
\draw[->,red] (left9) to[out=90,in=0] (left12);
\end{forest}
\end{minipage}
\begin{minipage}[b]{0.4\linewidth}
\centering
\begin{forest}
for tree={filled circle, inner sep = 0pt, outer sep = 0 pt, s sep = 1 cm, outer xsep = 0 pt}
[,
    [, draw, fill=white, edge label={node[left]{7}}
        [, draw, fill=white, edge label={node[right]{2}}
            [, fill=blue, edge label={node[right]{3}}, name=right3
                [, edge label={node[below]{4}}, name=right4
                ]
                [, edge label={node[below]{8}}, name=right8
                ]
            ]
        ]
        [, fill=red, edge label={node[left]{9}}, name=right9
            [, draw, fill=white, edge label={node[left]{5}}
                [, edge label={node[left]{12}}, name=right12
                ]
            ]
            [, fill=green, edge label={node[left]{11}}, name=right11
                [, draw, fill=white, edge label={node[left]{1}}
                    [, edge label={node[below]{6}}, name=right6
                    ]
                    [, edge label={node[right]{10}}, name=right10
                    ]
                ]
            ]
        ]
    ]
    [, draw, fill=white, edge label={node[right]{13}}
    ]
]
\draw[->,blue] (right3) to[out=180,in=90] (right4);
\draw[->,blue] (right4) to (right8);
\draw[->,blue] (right8) to[out=135,in=-90] (right3);
\draw[->,red] (right9) to[out=0,in=90] (right10);
\draw[->,red] (right10) to (right11);
\draw[->,red] (right11) to (right12);
\draw[->,red] (right12) to (right9);
\draw[<->,green] (right10) to[out=45,in=0] (right11);
\end{forest}
\end{minipage}
\begin{minipage}[b]{0.4\linewidth}
\centering
\begin{forest}
for tree={filled circle, inner sep = 0pt, outer sep = 0 pt, s sep = 1 cm, outer xsep = 0 pt}
[,
    [, draw, fill=white, edge label={node[left]{7}}
        [, draw, fill=white, edge label={node[right]{2}}
            [, edge label={node[right]{3}}, name=right3
                [, edge label={node[below]{4}}, name=right4
                ]
                [, edge label={node[below]{8}}, name=right8
                ]
            ]
        ]
        [, edge label={node[left]{9}}, name=right9
            [, draw, fill=white, edge label={node[left]{5}}
                [, edge label={node[left]{12}}, name=right12
                ]
            ]
            [, fill=green, edge label={node[left]{10}}, name=right11
                [, draw, fill=white, edge label={node[left]{1}}
                    [, edge label={node[below]{6}}, name=right6
                    ]
                    [, edge label={node[right]{11}}, name=right10
                    ]
                ]
            ]
        ]
    ]
    [, draw, fill=white, edge label={node[right]{13}}
    ]
]
\draw[<->,green] (right10) to[out=45,in=0] (right11);
\end{forest}
\end{minipage}}
\caption{The forest on the left avoids $123$, and the forest on the right avoids $132$. The top-down minima have been drawn with unfilled vertices and are the same for both forests. The red, blue, and green cycles show how vertex labels are permuted when the red, blue, and green vertices are shuffled or antishuffled (in this order from left to right for $\beta$ and in the opposite order from right to left for $\alpha$). These forests correspond to each other under $\alpha$ and $\beta$. Between the left and center forests, the red and blue vertices are (anti)shuffled, and between the center and right forests, the green vertex is (anti)shuffled.}
\label{123132fig}
\end{figure}

Finally, we record the following result, which is a special case of \cite[Theorem 4.8]{GP}.

\begin{thm}
\label{gpshuf213}
The maps $\alpha$ and $\beta$ restrict to maps of a bijection between rooted forests on $[n]$ avoiding $\{213,123\}$ and rooted forests on $[n]$ avoiding $\{213,132\}$.
\end{thm}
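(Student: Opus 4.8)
The plan is to bootstrap entirely from the properties of $\alpha$ and $\beta$ already established: they are mutually inverse, structure-preserving bijections between $\text{Av}_n(132)$ and $\text{Av}_n(123)$, with $\alpha$ carrying $132$-avoiding forests to $123$-avoiding ones and $\beta$ the reverse, and each is a composition of shuffles (resp.\ antishuffles) in a fixed depth order whose corresponding steps are inverse to one another. Since $\text{Av}_n(213,132)$ and $\text{Av}_n(213,123)$ sit inside $\text{Av}_n(132)$ and $\text{Av}_n(123)$ as the $213$-avoiding forests, the theorem follows once I prove two implications: (A) if $F\in\text{Av}_n(132)$ avoids $213$ then $\alpha(F)$ avoids $213$; and (B) if $G\in\text{Av}_n(123)$ avoids $213$ then $\beta(G)$ avoids $213$. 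Indeed (A) gives $\alpha(\text{Av}_n(213,132))\subseteq\text{Av}_n(213,123)$, and (B) applied to $G=\alpha(F)$ shows that $\alpha(F)\in\text{Av}_n(213,123)$ forces $F=\beta(\alpha(F))\in\text{Av}_n(213,132)$; so $\alpha$ restricts to a bijection $\text{Av}_n(213,132)\to\text{Av}_n(213,123)$ with inverse $\beta$, which is the claim.

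To make (A) and (B) tractable I would first record reformulations. Along any root-to-leaf path the top-down minima are precisely the left-to-right minima of the label sequence; a forest avoids $\{123,213\}$ iff along every such path each label is the smallest or second-smallest of its own prefix; and a forest avoids $\{132,213\}$ iff along every such path the labels split into maximal increasing runs supported on intervals $B_1,\dots,B_r\subseteq[n]$ with $\min B_i>\max B_{i+1}$ --- equivalently, whenever a vertex has a strictly larger-labelled descendant, the downward path to it is strictly increasing. I would also reuse that in a $123$-avoiding forest a non-top-down-minimum vertex carries the largest label in its subtree (so whenever $v$ is a strict ancestor of $w$ with $L(v)<L(w)$, the vertex $v$ is a top-down minimum), and the standard feature of the Simion--Schmidt bijection, implicit in the construction above, that $\alpha$ and $\beta$ fix the top-down minima together with their labels.

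For (A), suppose $F$ avoids $\{132,213\}$ but $G=\alpha(F)$ has a $213$ instance $v_1,v_2,v_3$ with $L_G(v_2)<L_G(v_1)<L_G(v_3)$. Since $G$ avoids $123$ and $v_1,v_2$ are strict ancestors of $v_3$ with smaller $G$-label, both are top-down minima of $G$, hence of $F$, with unchanged labels, so $L_F(v_2)<L_F(v_1)$. If $L_F(v_3)>L_F(v_1)$, the structural description of $F$ forces the path $v_1\to v_3$ to be strictly increasing, giving $L_F(v_1)<L_F(v_2)$, a contradiction; hence $v_3$'s label strictly increased under $\alpha$, from below $L_F(v_1)$ to above it. I would then follow the first shuffle that pushes $v_3$'s label past the (fixed) label of $v_1$: it happens at a non-minimum strict ancestor $a$ of $v_3$, where $a$ receives the maximum label of its current subtree and so ends up carrying a label exceeding $L(v_1)$ while lying strictly below $v_1$, with $v_3$ inside that subtree. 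Pulling this configuration back along the inverse-step correspondence with $\beta$ --- which returns the subtree of $a$ to its pre-shuffle state --- one recovers inside $F$ a vertex strictly below $v_1$ whose subtree contains a label larger than $L(v_1)$, and the aim is to play this off against the interval-run structure of $F$ to reach a forbidden pattern. Part (B) is the mirror argument, using the $\{123,213\}$ characterization, the fact that in a $132$-avoiding forest a non-minimum vertex is bounded above by the label of its lowest top-down-minimum ancestor, and that an antishuffle at $v$ installs at $v$ the smallest descendant label exceeding that bound.

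The main obstacle is precisely this last step. A single shuffle can, and in general does, create $132$-patterns, so ``$F$ avoids $132$'' is not preserved along the process, and one cannot simply invoke preservation of avoidance; the relabellings must be tracked carefully through the whole sequence of shuffles, and the three-vertex configuration above is not by itself forbidden in $F$ (it can be a $231$), so one must bring in further structure of $F$. I expect the clean route is to isolate an invariant weaker than $\{132,213\}$-avoidance that is nonetheless preserved by each single shuffle and still forces the terminal forest to avoid $213$ --- for instance a statement about how each root-to-leaf path decomposes relative to the fixed top-down minima and the already-processed ``maximum-in-subtree'' vertices --- and to verify by a short case analysis that one shuffle preserves it, the inverse-step correspondence then transporting the conclusion from (A) to (B). Everything else --- the reduction, the reformulations, and bookkeeping the top-down minima --- is routine given the properties of $\alpha$ and $\beta$ already in hand.
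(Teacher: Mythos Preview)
The paper does not prove this; it cites \cite[Theorem~4.8]{GP}. Your plan is the right one and matches the template used in Section~\ref{gp71i}, but your direct attack on (A) misidentifies the key step. A shuffle at a non-TDM vertex $a$ cannot raise the label of any strict descendant of $a$: if the subtree of $a$ carries labels $\ell_1<\cdots<\ell_k$ with $a$ holding $\ell_i$, then after the shuffle the strict descendant that held $\ell_m$ keeps $\ell_m$ when $m<i$ and drops to $\ell_{m-1}$ when $m>i$. Thus the only shuffle that can raise the label at $v_3$ is the one at $v_3$ itself, which sets it to the maximum $M$ of $L_F$ over the subtree of $v_3$; later ancestor-shuffles only lower it, so $L_G(v_3)\le M$. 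Since $L_G(v_3)>L_F(v_1)$, some $z$ in the subtree of $v_3$ has $L_F(z)>L_F(v_1)$, and your $\{132,213\}$-characterisation forces the $F$-path $v_1\to z$ to be strictly increasing, contradicting $L_F(v_2)<L_F(v_1)$. This actually completes (A).

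Part (B), however, is not the mirror of this: under an antishuffle at $a$ the strict-descendant labels weakly \emph{increase} while $L(a)$ drops, so the monotonicity runs the wrong way and one cannot bound $L_G(v_3)$ from below by the same trick. This is where the invariant you anticipate is genuinely needed, and the gap in your proposal is that you have not produced it. In the paper's language (Definition~\ref{segdef}), take $P_0$: every non-TDM vertex has a one-element segment, i.e.\ the only TDM ancestor of a non-TDM $v$ with label below $L(v)$ is its lowest TDM ancestor $b_v$. A $123$- or $132$-avoiding forest avoids $213$ iff it satisfies $P_0$: any $213$-instance can be taken with its first two vertices TDM, putting two elements into the third vertex's segment, and conversely two TDM ancestors in a segment give a $213$. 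Preservation of $P_0$ under a single shuffle or antishuffle at $v$ is then a short case check using that TDM labels are fixed, that strict descendants of $v$ with label below $L(v)$ (for shuffles) or outside $(L(b_v),L(v))$ (for antishuffles) are unmoved, and that $P_0$ forces $b_w=b_v$ for every non-TDM $w$ in the subtree of $v$ with $L(w)>L(b_v)$ --- which keeps all labels in play strictly below the label of the second-lowest TDM ancestor of $v$.
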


\subsection{Restricting the forest Simion-Schmidt bijection}
\label{gp71i}
\hspace*{\fill} \\
We will now prove parts (i) and (ii) of Conjecture \ref{gp71}.

Recall that we wish to show that $\{123,2413\}\sim\{132,2314\}$ and $\{123,3142\}\sim\{132,3124\}$. Our approach is to show that the maps $\alpha$ and $\beta$ restrict to inverse maps of a bijection between rooted forests on $[n]$ avoiding $\{123,X\}$ and rooted forests on $[n]$ avoiding $\{132,Y\}$, where we have that $(X,Y)\in\{(2413,2314),(3142,3124)\}$. Once we do so, the conjecture follows. To show this, for each $(X,Y)$ we identify a property $P$ of rooted forests such that a $123$-avoiding forest avoids $X$ if and only if it satisfies $P$, a $132$-avoiding forest avoids $Y$ if and only if it satisfies $P$, and $P$ is preserved by shuffles and antishuffles. This is clearly sufficient for demonstrating the following theorem.

\begin{thm}
\label{thm71i}
We have that the classical forest-Wilf equivalences $\{123,2413\}\sim\{132,2314\}$ and $\{123,3142\}\sim\{132,3124\}$ hold.
\end{thm}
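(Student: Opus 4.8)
The plan is to exploit the framework already set up: since $\alpha$ and $\beta$ are mutually inverse, structure-preserving bijections between $123$-avoiding and $132$-avoiding forests on $[n]$, it suffices to find, for each pair $(X,Y)\in\{(2413,2314),(3142,3124)\}$, a property $P$ of rooted labeled forests such that (a) a $123$-avoiding forest avoids $X$ iff it satisfies $P$; (b) a $132$-avoiding forest avoids $Y$ iff it satisfies $P$; and (c) $P$ is invariant under a single shuffle at a non-top-down-minimum vertex, hence under $\alpha$, and likewise under antishuffles, hence under $\beta$. Once such a $P$ is identified, $\alpha$ and $\beta$ restrict to a bijection between forests avoiding $\{123,X\}$ and forests avoiding $\{132,Y\}$, and counting gives the stated equivalence.

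The first task is to guess the right $P$ in each case by analyzing what the extra pattern forbids \emph{inside} a forest already avoiding the length-$3$ pattern. For $(X,Y)=(2413,2314)$: in a $123$-avoiding forest, recall every non-top-down-minimum vertex $v$ has all its descendants smaller than it; I would unwind when a $123$-avoider can still contain $2413$ and expect the obstruction to be phrased in terms of a top-down minimum $u$ (playing the role of the ``$1$''), its lowest relevant ancestor, and two incomparable-in-value descendants below. The natural candidate is something like: ``there do not exist vertices $a$ (ancestor of) $u$ (ancestor of) $b$ (ancestor of) $c$ on a downward path with $u$ a top-down minimum, $L(b)<L(a)<L(c)$''—i.e. forbidding a $3$-$1$-$2$-like configuration hung off a top-down minimum, which is exactly what survives of $2413$/$2314$ once the local order of the non-minima is pinned down by the length-$3$ avoidance. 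I would then check directly that, under this $P$, a $123$-avoider contains $2413$ iff $P$ fails, using the characterization recalled after Definition~\ref{tdmdef} (non-top-down-minima dominate their descendants), and dually that a $132$-avoider contains $2314$ iff the same $P$ fails, using the $132$ characterization (the ``$\le$ or $L(u)\ge$'' dichotomy). For $(X,Y)=(3142,3124)$ I expect an analogous but distinct $P$, again organized around a top-down minimum and the position of $L(u)$ relative to later labels; the reversal/complement symmetry and the structure of the twist operation should make the two cases nearly parallel.

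The second task—and what I expect to be the main obstacle—is verifying invariance of $P$ under a single shuffle/antishuffle. A shuffle at $v$ replaces $L(v)$ by the maximum label among descendants of $v$ and cyclically relabels the strict descendants to preserve their relative order; an antishuffle at $v$ uses instead the minimum descendant label exceeding $L(u)$, where $u$ is $v$'s lowest top-down-minimum ancestor. The key points to nail down are: shuffling/antishuffling at $v$ does not change which vertices are top-down minima (stated in the excerpt), does not change the relative order of labels on any downward path not passing through $v$, and on paths through $v$ changes only $L(v)$ and performs an order-preserving cyclic shift below. So if $P$ is expressed purely in terms of (i) the top-down-minimum status of vertices, (ii) relative orders of labels along downward paths, and (iii) comparisons of a descendant's label with the label of its lowest top-down-minimum ancestor, then I would argue case-by-case that a forbidden configuration witnessing $\lnot P$ in the shuffled forest pulls back to one in the original, and vice versa, with the only delicate case being configurations that use the vertex $v$ itself. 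Here I would lean on the precise definitions of shuffle (new label $=$ max descendant) and antishuffle (new label $=$ min descendant exceeding $L(u)$) to show $v$'s role in any such configuration is either impossible or transfers; the fact (stated in the excerpt) that corresponding steps of $\alpha$ and $\beta$ are inverses means it is enough to check invariance under one shuffle and one antishuffle separately.

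Finally I would assemble the pieces: properties (a), (b), (c) together show $\alpha$ maps $\{132,Y\}$-avoiders to $\{123,X\}$-avoiders and $\beta$ maps back, so they are inverse bijections between the two sets for every $n$; hence $f_n(\{123,X\})=f_n(\{132,Y\})$ for all $n\ge 0$, which is exactly $\{123,2413\}\sim\{132,2314\}$ and $\{123,3142\}\sim\{132,3124\}$. The write-up would present the two choices of $P$ explicitly, prove the three bullet points for each, and cite Theorem~\ref{gpshuf213} and the surrounding discussion of $\alpha,\beta$ from \cite{GP} for the underlying bijection; no new machinery beyond careful bookkeeping with the shuffle/antishuffle definitions should be needed.
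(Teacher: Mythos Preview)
Your overall framework is exactly the paper's: find a property $P$ of forests invariant under shuffles and antishuffles that characterizes avoidance of $X$ among $123$-avoiders and of $Y$ among $132$-avoiders, then conclude that $\alpha,\beta$ restrict. That part is fine and matches the paper verbatim.

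The gap is that you have not actually produced the properties, and your sketch of the invariance argument is not the mechanism that works. For $(2413,2314)$ the paper's $P_1$ is stated in terms of a specific TDM--nonTDM--TDM--nonTDM pattern along a root-to-vertex path: a non-TDM vertex $v$ is \emph{special} if such a four-term alternation exists above it, its \emph{ceiling} is the lowest TDM starting such an alternation, and $P_1$ says every special vertex has label smaller than its ceiling. Your candidate (``$a$ ancestor of a TDM $u$ ancestor of $b$ ancestor of $c$ with $L(b)<L(a)<L(c)$'') is not this and does not capture the alternation; checking (a) and (b) for your candidate would already run into trouble. For $(3142,3124)$ you give no candidate at all; the paper's $P_2$ is genuinely different from $P_1$, phrased via the \emph{segment} of a non-TDM vertex (the set of TDM ancestors with smaller label) and requires that comparable non-TDM vertices with intersecting segments share the same top. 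Neither property is an obvious guess from staring at the patterns.

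More importantly, the invariance step is not a case analysis on configurations passing through the shuffled vertex $v$. For $P_1$ the paper shows a structural fact: in a $P_1$-forest, for each non-TDM $u$ with lowest TDM ancestor $w$, the descendants of $u$ with label exceeding $L(w)$ form exactly a certain subtree $T_u$, and any shuffle or antishuffle at $u$ permutes labels only inside $T_u$; this makes $P_1$-invariance automatic. For $P_2$ the argument is that a shuffle or antishuffle preserves the \emph{top} of every non-TDM vertex's segment (because the vertices whose labels move all share the same top), hence preserves all segments, hence preserves $P_2$. In both cases the proof hinges on first showing that the relabeling is confined to a region where the relevant invariant is constant, not on tracking a witness configuration through the operation. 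Your proposed ``pull back a forbidden configuration'' approach, especially the ``delicate case'' at $v$, would be hard to close without these structural observations; the paper's route sidesteps that delicacy entirely.
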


For the rest of this subsection, we will refer to vertices that are top-down minima as TDM vertices and other vertices as non-TDM vertices.

\begin{defn}
\label{specdef}
A non-TDM vertex $v$ is \emph{special} if the path from the root of the forest to $v$ contains (not necessarily consecutive) vertices $v_1$, $v_2$, $v_3$, and $v_4$, in that order, such that $v_1$ and $v_3$ are TDM and $v_2$ and $v_4$ are non-TDM (for example, we may take $v_4=v$).
\end{defn}

\begin{defn}
\label{ceildef}
Let the \emph{ceiling} of a special vertex $v$ be its lowest ancestor $u$ such that the path from $u$ to $v$ contains (not necessarily consecutive) vertices $v_1$, $v_2$, $v_3$, and $v_4$, in that order, such that $v_1$ and $v_3$ are TDM and $v_2$ and $v_4$ are non-TDM (so we necessarily have $v_1=u$ and $v_4=v$).
\end{defn}

Any path starting from the root of the forest will consist of vertices \[v_{0,1},\ldots,v_{0,m_0},w_{0,1},\ldots,w_{0,n_0},v_{1,1},\ldots,v_{1,m_1},w_{1,1},\ldots,w_{1,n_1},\ldots\] where $v_{i,j}$ and $w_{i,j}$ respectively denote TDM and non-TDM vertices. The special vertices along this path are $w_{i,j}$ for $i>0$, and the special vertices $w_{i,1},\ldots,w_{i,n_i}$ all have the same ceiling $v_{i-1,m_{i-1}}$.

\begin{defn}
\label{p1def}
A \emph{$P_1$-forest} is a forest in which the following holds: for every special vertex $v$ with ceiling $u$, $L(u)>L(v)$.
\end{defn}

Here, $P_1$ is the key property $P$ for $(X,Y)=(2413,2314)$ mentioned at the beginning of this subsection.

\begin{exmp}
\label{p1ex}
Figure \ref{p1fig} shows a forest with labels ommitted, where TDM vertices have been colored black and non-TDM vertices have been colored white. The special vertices are $v_1$ and $v_2$, with respective ceilings $u_1$ and $u_2$.
\end{exmp}

\begin{figure}[ht]
\centering
\scalebox{0.7}
{\forestset{filled circle/.style={
      circle,
      text width=4pt,
      fill,
    },}
\begin{forest}
for tree={filled circle, inner sep = 0pt, outer sep = 0 pt, s sep = 1 cm}
[,
    [, edge label={node[below]{$u_1$}}
        [, draw, fill=white
            [, draw, fill=white
                [,
                    [,
                    ]
                ]
            ]
            [, draw, fill=white
                [,
                    [, draw, fill=white, edge label={node[below]{$v_1$}}
                    ]
                    [,
                    ]
                ]
            ]
        ]
        [, edge label={node[right]{$u_2$}}
            [,
                [, draw, fill=white
                    [, draw, fill=white
                    ]
                    [, draw, fill=white
                    ]
                ]
            ]
            [, draw, fill=white
                [,
                    [, draw, fill=white, edge label={node[below]{$v_2$}}
                    ]
                ]
            ]
        ]
    ]
]
\end{forest}}
\caption{The special vertices $v_1$ and $v_2$ have respective ceilings $u_1$ and $u_2$.}
\label{p1fig}
\end{figure}

\begin{lem}
\label{123p1}
A $123$-avoiding forest avoids $2413$ if and only if it is a $P_1$-forest.
\end{lem}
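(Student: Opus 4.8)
The plan is to characterize, in a $123$-avoiding forest, exactly when a $2413$ instance can occur, and then to match that obstruction with the failure of the $P_1$-condition. Recall from the discussion after Definition~\ref{tdmdef} that in a $123$-avoiding forest, any non-TDM vertex $v$ dominates all of its descendants: $L(v) \ge L(w)$ for every descendant $w$ of $v$. So if $v_1 v_2 v_3 v_4$ is an instance of $2413$ along a root-to-leaf path (with $v_1$ an ancestor of $v_2$, etc.), then from $L(v_2) < L(v_1)$ and $L(v_1) < L(v_3)$ we learn $v_2$ cannot dominate its descendant $v_3$, so $v_2$ is a TDM vertex; likewise $L(v_3) > L(v_4)$ forces nothing immediately, but $L(v_4)$ sitting between $L(v_2)$ and $L(v_1)$ together with $v_1$ being an ancestor of $v_4$ and $L(v_1) > L(v_4)$ forces $v_1$ to not dominate $v_4$ either, so $v_1$ is also TDM. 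Meanwhile $v_3$, being sandwiched $L(v_2) < L(v_3)$ and $L(v_1) < L(v_3)$ with the TDM ancestors $v_1, v_2$ both smaller at the relevant comparisons, needs to be checked against TDM-ness directly: since $v_2$ is a TDM ancestor of $v_3$ with $L(v_2) < L(v_3)$, the vertex $v_3$ is \emph{not} a TDM vertex (a TDM vertex must be $\le$ all its ancestors). So a $2413$ instance in a $123$-avoiding forest consists of TDM, TDM, non-TDM, anything, and I should pin down $v_4$: $v_4$ is a strict descendant of the non-TDM vertex $v_3$, and $v_2$ is a TDM vertex strictly above $v_3$ with $L(v_2) < L(v_4) < L(v_1)$; whether $v_4$ is TDM or non-TDM, replacing $v_4$ by its highest TDM descendant-or-itself keeps a valid instance, so WLOG I may take $v_4$ non-TDM. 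Thus a $2413$ instance (after this normalization) is precisely: TDM $v_1$, TDM $v_2$, non-TDM $v_3$, non-TDM $v_4$ in that order along a path, with $L(v_2) < L(v_4) < L(v_1) < L(v_3)$.

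Next I would connect this to the ceiling. A non-TDM vertex $w$ with $v_1$ (TDM), $v_2$ (non-TDM), $v_3$ (TDM), $w$ (non-TDM) above it in order is exactly a special vertex, and (relabeling) the configuration TDM, TDM, non-TDM, non-TDM from the previous paragraph is the same shape read with the two TDM vertices first — indeed, if $v_3$ is non-TDM then its lowest TDM ancestor $v_3'$ sits between $v_2$ and $v_3$, and then $v_1, v_2, v_3', v_3$ (TDM, TDM? no) — let me instead argue directly in terms of the ceiling structure described after Definition~\ref{ceildef}: along a path the vertices group as blocks $v_{i,1}\dots v_{i,m_i}$ (TDM) followed by $w_{i,1}\dots w_{i,n_i}$ (non-TDM), and the special vertices are the $w_{i,j}$ with $i \ge 1$, all sharing ceiling $v_{i-1,m_{i-1}}$. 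Given a special vertex $v = w_{i,j}$ with ceiling $u = v_{i-1,m_{i-1}}$, the witnesses to specialness can be taken as $v_1 = u$, some non-TDM $v_2$ in an earlier block $w_{i',\cdot}$ (or we only need \emph{one} TDM below $u$, namely any $v_{i,\ell}$, with a non-TDM strictly above $u$) — the upshot is that $P_1$ failing at $v$ (i.e.\ $L(u) \le L(v)$, hence $L(u) < L(v)$ since labels are distinct) produces exactly a $2413$ instance: take the non-TDM vertex $w$ immediately before the block starting with $u$ (this exists because $u$ is not the first block, as $v$ is special), a TDM vertex $t$ strictly above that $w$ witnessing $w$ non-TDM (so $L(t) < L(w)$... wait, I want $t$ the TDM ancestor of $w$ with the right comparison) — more carefully, $w$ non-TDM means some ancestor is smaller; take $t$ to be such a TDM ancestor of $w$ with $L(t) < L(w)$. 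Then $t, w, u, v$ read along the path: $t$ is TDM, $w$ is non-TDM, $u$ is TDM, $v$ is non-TDM, and I claim $L(w) < L(t)$? No — I need the pattern $2413$, so I need values in relative order $2,4,1,3$: that is $L(u) < L(t) < L(v) < L(w)$. Hmm, so I need $L(t) < L(v)$ and $L(v) < L(w)$ and $L(u) < L(t)$. Since $u$ is a TDM vertex and $t$ is an earlier TDM vertex which is an ancestor of $u$, we get $L(t) \le L(u)$?? That is backwards. So I should instead pick $t$ to be a TDM vertex \emph{below} the failure — here is where I expect the real work: choosing the four witnessing vertices with the correct value order, using that (a) TDM vertices weakly decrease along a path, (b) a non-TDM vertex exceeds all its descendants, (c) $L(u) < L(v)$ by the assumed failure.

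So the core of the proof, and the step I expect to be the main obstacle, is the careful bookkeeping of \emph{which} four vertices on the path witness a $2413$ instance and verifying their labels realize the order $2,4,1,3$, in both directions. For the ``$P_1 \Rightarrow$ avoids $2413$'' direction: assume a $2413$ instance $v_1, v_2, v_3, v_4$; by the normalization above, $v_1, v_2$ are TDM and $v_3, v_4$ are non-TDM; then $v_4$ is a special vertex whose ceiling $u$ lies at depth $\ge$ that of $v_1$ (since $v_1$ is a TDM vertex at or above the last TDM block preceding the non-TDM block containing... ) — and monotonicity of TDM labels plus $L(u) \ge L(v_1)$? I will need $L(u) \ge L(v_1)$: $u$ is a TDM ancestor of (or equal to) the deepest TDM vertex above $v_3$; since $v_2$ is a TDM vertex above $v_3$ and $u = v_{i-1, m_{i-1}}$ is the \emph{deepest} TDM vertex of its block strictly above the non-TDM block containing $v_3$, either $u$ is weakly above $v_2$ (so $L(u) \ge L(v_2) > L(v_4)$ — wait that gives $L(u) > L(v_4)$, which is $P_1$ \emph{holding} at $v_4$, good, no contradiction) or... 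Let me restate the target: $P_1$ holding means $L(u) > L(v_4)$ for the ceiling $u$ of $v_4$. The instance gives $L(v_2) > L(v_4)$ with $v_2$ TDM above $v_3$. I want to derive $L(u) > L(v_4)$, i.e.\ a contradiction with the \emph{negation} being assumed in the other direction; for \emph{this} direction I assume $P_1$ and want no instance, so suppose an instance exists and derive $L(u) \le L(v_4)$, contradicting $P_1$. The point will be: $u$, being the ceiling of $v_4$, is the deepest TDM vertex such that there are TDM, non-TDM, TDM, non-TDM below it ending at $v_4$; but the instance exhibits $v_1$ (TDM) above $v_2$ (TDM, which we can view after possibly replacing $v_2$ by the first non-TDM below it... ) — honestly the cleanest route is to show the ceiling $u$ of $v_4$ satisfies $L(u) \le L(v_1)$, because $u$ sits on the path at or below $v_1$ among TDM vertices and TDM labels decrease; combined with $L(v_1) < L(v_3)$... no, I want $L(u) \le L(v_4)$. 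I will get it from: the non-TDM vertex just above $u$'s block, call it $p$, satisfies $L(p) \ge L(\text{its descendant } v_1)$? Only if $v_1$ is a descendant of $p$, which holds, so $L(p) \ge L(v_1) > L(v_4)$; that is not the contradiction either. I'll resolve this by a direct induction / minimal-counterexample on the path: among all TDM vertices $x$ on the path with $L(x) > L(v_4)$ and $x$ an ancestor of $v_4$ admitting the full TDM--nonTDM--TDM--nonTDM--to-$v_4$ shape below $x$, the deepest such is the ceiling, and I'll show the instance forces the ceiling to have label $> L(v_4)$, i.e.\ $P_1$; conversely, if $P_1$ fails, the ceiling has label $< L(v_4)$ and I build the instance by taking $u$ itself as ``$2$'', the non-TDM vertex immediately preceding $u$'s block as ``$4$'' (it exceeds all its descendants, in particular $v_4$, and exceeds $L(u)$ since $u$ is its descendant — wait, that gives it $> L(u)$ and $> L(v_4)$, and we need it to be the largest, the ``$4$'': good), a TDM vertex strictly below $u$ and above $v_4$ that is smaller than $L(u)$ — but all TDM vertices below $u$ have label $\le L(u)$ automatically; I need one $< L(v_4)$, and such exists (e.g.\ take $v_4$'s lowest TDM ancestor $q$; if $L(q) \ge L(v_4)$ then... ) — the ``$1$'' should be the smallest, so I want $L(q) < L(v_4)$, and if not, I trace up to a smaller TDM vertex, bottoming out at the root's TDM chain. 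I will write this induction out carefully in the proof; the rest is the symmetric argument, Lemma-style, replacing ``avoids $2413$'' reasoning by ``avoids $2314$'' for $132$-avoiding forests, which is handled in the next lemma of the paper, and I will not repeat it here.
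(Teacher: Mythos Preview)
Your proposal has a fundamental error right at the start: you have misread the pattern $2413$. If $v_1,v_2,v_3,v_4$ is an instance of $2413$, then the label inequalities are $L(v_3)<L(v_1)<L(v_4)<L(v_2)$, not $L(v_2)<L(v_1)$, $L(v_1)<L(v_3)$, $L(v_3)>L(v_4)$, $L(v_1)>L(v_4)$ as you write. This reversal propagates: you conclude that $v_1,v_2$ are TDM and $v_3,v_4$ are non-TDM, whereas the correct classification (via the $123$-avoidance characterization you cite) is that $v_1$ and $v_3$ are TDM and $v_2$ and $v_4$ are non-TDM. Because of this, all of your subsequent attempts to locate the ceiling and to build a $2413$ instance from a $P_1$-violation are chasing the wrong configuration, which is why you repeatedly find yourself saying ``that is backwards'' and never closing either direction.

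Once the roles are corrected, both directions are short and require no induction. For the forward direction, given the instance $v_1,v_2,v_3,v_4$ as above, $v_4$ is special (witnesses $v_1,v_2,v_3,v_4$ already have the TDM/non-TDM alternation), its ceiling $u$ is a TDM descendant of $v_1$, and TDM labels weakly decrease along the path, so $L(u)\le L(v_1)<L(v_4)$, violating $P_1$. For the converse, take a special $v$ with ceiling $u$ and $L(u)<L(v)$; pick witnesses $u,a,b,v$ with $u,b$ TDM and $a,v$ non-TDM, and take $a$ to be the child of $u$. Then $L(b)<L(u)$ since $b$ is TDM, $L(u)<L(a)$ since $a$ is non-TDM with TDM parent $u$, and $L(v)<L(a)$ since $a$ is non-TDM in a $123$-avoiding forest. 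Combined with the assumed $L(u)<L(v)$, this gives $L(b)<L(u)<L(v)<L(a)$, so $u,a,b,v$ is a $2413$ instance. Your planned ``minimal-counterexample induction on the path'' is unnecessary.
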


\begin{proof}
Recall that a forest avoids $123$ if and only if every non-TDM vertex has the largest label among its descendants.

In one direction, we show that if a forest avoids $123$ and contains $2413$, then it contains a special vertex whose label is larger than its ceiling's. Suppose that $v_1,v_2,v_3,v_4$ is an instance of $2413$ in the forest, so $v_1,v_2,v_3,v_4$ lie in that order on a path from the root and $L(v_3)<L(v_1)<L(v_4)<L(v_2)$. Since $L(v_1)<L(v_2)$ and $L(v_3)<L(v_4)$, $v_2$ and $v_4$ are non-TDM. Additionally, $v_1$ and $v_3$ are TDM since they are not greater than all of their strict descendants. Thus, $v_4$ is a special vertex, and its ceiling $u$ is a TDM descendant of $v_1$. Then we have $L(u)<L(v_1)<L(v_4)$, so $v_4$ has a larger label than its ceiling, as desired.

In the other direction, we show that if a forest avoids $123$ and contains a special vertex whose label is larger than its ceiling's, then it contains $2413$. Let $v$ be this special vertex with ceiling $u$, and let $a$ and $b$ be vertices along the path from $u$ to $v$ such that $u$ and $b$ are TDM, $a$ and $v$ are non-TDM, and $u,a,b,v$ appear in that order. We may assume $a$ is a child of $u$, as $u$'s child is non-TDM and appears before $b$ and $v$. As $b$ is TDM, $L(b)<L(u),L(a)$, and as $a$ is non-TDM, $L(a)>L(b),L(v)$. Since $a$ is non-TDM and the child of the non-TDM vertex $u$, $L(u)<L(a)$. By assumption, $L(v)>L(u)$, so $L(a)>L(v)>L(u)>L(b)$. Hence, $u,a,b,v$ form an instance of $2413$, as desired.
\end{proof}

\begin{lem}
\label{132p1}
A $132$-avoiding forest avoids $2314$ if and only if it is a $P_1$-forest.
\end{lem}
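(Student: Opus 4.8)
The plan is to mirror the proof of Lemma~\ref{123p1} as closely as the asymmetry of the two characterizations allows: I would prove each of the two implications by contraposition, now using the recalled criterion that a forest avoids $132$ exactly when no non-TDM vertex $v$, with lowest TDM ancestor $u$, has a descendant $w$ with $L(u)<L(w)<L(v)$, in the role played before by the $123$-criterion.

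For the direction ``$P_1$-forest $\Rightarrow$ avoids $2314$'' I would show the contrapositive: given a $132$-avoiding forest $F$ that is not a $P_1$-forest, I produce an instance of $2314$. Take a special vertex $v$ whose ceiling $u$ satisfies $L(u)<L(v)$. Using the block structure of the path from the root to $v$ recalled just before the lemma, $u$ is the last TDM vertex of its maximal TDM run, so its child $a$ on this path is non-TDM with lowest TDM ancestor $u$, hence $L(a)>L(u)$; let $b$ be the lowest TDM ancestor of $v$, which lies strictly between $a$ and $v$ on the path and has $L(b)<L(u)$ since both are TDM and $b$ is deeper. Now $132$-avoidance applied to $a$ with the descendant $v$, together with $L(v)>L(u)$, forces $L(v)>L(a)$, so $L(b)<L(u)<L(a)<L(v)$ and $u,a,b,v$ is the desired instance of $2314$; this mirrors the second half of the proof of Lemma~\ref{123p1}.

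For the direction ``avoids $2314$ $\Rightarrow$ $P_1$-forest'', again by contraposition, I start from an instance $v_1,v_2,v_3,v_4$ of $2314$ in a $132$-avoiding forest and aim to exhibit a special vertex whose label exceeds that of its ceiling. As in the $123$ case, $v_2$ and $v_4$ are forced to be non-TDM. The genuine difference is that $132$-avoidance does \emph{not} force $v_1$ and $v_3$ to be TDM, so I first normalize the instance: if $v_3$ is non-TDM, I replace it by its lowest TDM ancestor $u_3$ (which has $L(u_3)<L(v_3)$, so the resulting four labels still form the pattern $2314$). The step I expect to be the crux is checking that this replacement stays on a single path from the root in the right order, i.e. that $u_3$ is a strict descendant of $v_2$ — and this is exactly where $132$-avoidance does the work: if $u_3$ were instead a strict ancestor of $v_2$, it would also be the lowest TDM ancestor of $v_2$, and then $v_3$ would be a descendant of $v_2$ with $L(u_3)<L(v_3)<L(v_2)$, contradicting the $132$-criterion. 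Once $v_3$ is TDM (either at the outset or after this replacement), let $t$ be the lowest TDM ancestor of $v_2$; then $t,v_2,v_3,v_4$ witnesses that $v_4$ is special, and its ceiling $u$ satisfies $L(u)\le L(t)<L(v_2)<L(v_4)$ — using that $t$ is the lowest TDM ancestor of the non-TDM vertex $v_2$ — so $F$ is not a $P_1$-forest.

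Apart from that normalization, the remaining work is bookkeeping: identifying ceilings via the block decomposition of the relevant path, the elementary inequalities between labels of nested TDM vertices, and repeated use of the two recalled characterizations. Nothing about the maps $\alpha,\beta$ is needed here; the preservation of $P_1$ under shuffles and antishuffles is a separate matter.
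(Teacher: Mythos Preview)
Your proposal is correct and follows essentially the same approach as the paper. The only difference is cosmetic: in the direction ``contains $2314$ $\Rightarrow$ not $P_1$'', the paper first normalizes $v_1$ to be the lowest TDM ancestor of $v_2$ (using $132$-avoidance to verify $L(v_1)>L(v_3)$ persists) and then normalizes $v_3$, whereas you normalize $v_3$ first and then introduce $t$ as the lowest TDM ancestor of $v_2$; both orderings invoke the $132$-criterion in the same essential way, and the other direction is identical to the paper's argument.
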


\begin{proof}
Recall that a forest avoids $132$ if and only if every non-TDM vertex has the smallest label among its descendants that are greater than its lowest TDM ancestor.

In one direction, we show that if a forest avoids $132$ and contains $2314$, then it contains a special vertex whose label is larger than its ceiling's. Suppose that $v_1,v_2,v_3,v_4$ is an instance of 2314 in the forest, so $v_1,v_2,v_3,v_4$ lie in that order on a path from the root and $L(v_3)<L(v_1)<L(v_2)<L(v_4)$. Since $L(v_2)>L(v_1)$ and $L(v_4)>L(v_3)$, $v_2$ and $v_4$ are non-TDM. Let $v$ be the lowest TDM ancestor of $v_2$, so $L(v)<L(v_2)$. As $L(v_3)<L(v_2)$, we must have that $L(v)>L(v_3)$ or else $v_2$ would not have the smallest possible label greater than $L(v)$. Since $L(v_3)<L(v)<L(v_2)$, we may assume that $v_1=v$, so $v_1$ is the lowest TDM ancestor of $v_2$. Since $L(v_1)>L(v_3)$, there exists a TDM vertex on the path from $v_1$ to $v_3$ different from $v_1$. This TDM vertex must come after $v_2$, so we may assume that it is $v_3$. Thus, $v_1$ and $v_3$ are TDM and $v_2$ and $v_4$ are non-TDM, which means that $v_4$ is a special vertex, and its ceiling $u$ is a TDM descendant of $v_1$. Then we have $L(u)<L(v_1)<L(v_4)$, so $v_4$ has a larger label than its ceiling, as desired.

In the other direction, we show that if a forest avoids $132$ and contains a special vertex whose label is larger than its ceiling's, then it contains $2314$. Let $v$ be this special vertex with ceiling $u$, and let $a$ and $b$ be vertices along the path from $u$ to $v$ such that $u$ and $b$ are TDM, $a$ and $v$ are non-TDM, and $u,a,b,v$ appear in that order. We may take $a$ to be $u$'s child, as $u$'s child is non-TDM and appears before $b$ and $v$. As $b$ is TDM, $L(b)<L(u),L(a)$. By assumption, $L(u)<L(v)$, so since $a$ is non-TDM and its parent $u$ is TDM, $L(u)<L(a)<L(v)$. Putting this together, we have $L(v)>L(a)>L(u)>L(b)$, so $u,a,b,v$ form an instance of $2314$, as desired.
\end{proof}

\begin{lem}
\label{shufp1}
Applying a shuffle or antishuffle to a $P_1$-forest results in a $P_1$-forest.
\end{lem}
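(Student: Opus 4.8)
The plan is to lean on the structural facts about shuffles and antishuffles already recorded: an operation at a vertex $p$ changes no label outside the subtree rooted at $p$, permutes the multiset $M$ of labels on the descendants of $p$ (including $p$) while preserving the relative order of the labels on the \emph{strict} descendants of $p$, and alters neither the underlying forest nor the set of TDM vertices. The last point means the special vertices and their ceilings are unchanged, so it suffices to fix a special vertex $v$ with ceiling $u$ and show $L'(u)>L'(v)$, where $L'$ denotes the new labelling.

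Two cases are immediate. If $v$ is not a descendant of $p$, then neither is $u$, so both labels are untouched and the inequality persists because $F$ is a $P_1$-forest. If $u$ is a descendant of $p$, then $u$ (being TDM while $p$ is not) and hence $v$ are \emph{strict} descendants of $p$, so the operation preserves the order of $L(u)$ and $L(v)$. The remaining case --- $v$ a descendant of $p$ with $u$ a strict ancestor of $p$ --- is the crux: there $L'(u)=L(u)$, so everything reduces to proving $L'(v)<L(u)$. Here I would invoke the description of the root-to-$v$ path recalled above: since $u$ is the ceiling of $v$, the vertex $p$, being a non-TDM vertex that is a strict descendant of $u$ and a (weak) ancestor of $v$, must lie either in the same non-TDM block as $v$, in which case $p$ is itself special with ceiling $u$, or in the non-TDM block immediately preceding it, in which case $u$ is the lowest TDM ancestor of $p$.

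For the first subcase I would first prove the self-contained claim that if $w$ is special with ceiling $u$, then every descendant of $w$ has label less than $L(u)$: a TDM descendant $y$ satisfies $L(y)\le L(w)<L(u)$ because $w$ is an ancestor of $y$ and $F$ is a $P_1$-forest, while a non-TDM descendant $y$ is itself special and has a ceiling that is a descendant of $u$ or $u$ itself (the chain exhibiting $u$ as $w$'s ceiling, with its last vertex replaced by $y$, exhibits $u$ as a candidate for $y$'s ceiling), so $L(y)<L(u)$ again by the $P_1$ hypothesis. Since $L'(v)$ is a member of $M$, this forces $L'(v)<L(u)$ at once. For the second subcase, $u$ being the lowest TDM ancestor of $p$ forces $L(p)>L(u)$; write $M_{<}$ and $M_{>}$ for the sets of labels of $M$ below and above $L(u)$, so that $L(p)\in M_{>}$ and $L(v)\in M_{<}$. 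A shuffle sends $\max M\in M_{>}$ up to $p$, and an antishuffle sends $\min\{x\in M:x>L(u)\}=\min M_{>}$ up to $p$; in either case the strict descendants of $p$ receive a label set containing all of $M_{<}$. Because $L(v)\in M_{<}$, the rank of $v$ among the strict descendants of $p$ does not exceed the size of $M_{<}$, and since the labels in $M_{<}$ are precisely the smallest ones in the new label set, $v$ again receives a label in $M_{<}$, i.e. $L'(v)<L(u)$. This exhausts the cases.

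The step I expect to be the real obstacle is the antishuffle in the last subcase. A shuffle only lowers the labels of strict descendants of $p$, so it is transparently harmless; an antishuffle, however, can raise some of them, and one has to rule out that $v$ is promoted above $L(u)$. The point that makes this go through is that the label an antishuffle pulls up to $p$ is exactly $\min M_{>}$, which already exceeds $L(u)$, so the small labels $M_{<}$ --- among them $L(v)$ --- are left untouched; converting this observation into the rank argument above, together with pinning down which of the two subcases $p$ falls into and establishing the auxiliary claim about descendants of a ceiling-$u$ vertex, is where essentially all of the work lies. Everything else is formal, following from the bookkeeping already set up for the forest Simion-Schmidt maps.
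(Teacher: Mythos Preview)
Your argument is correct. The case split is exhaustive (any two ancestors of $v$ are comparable, so once $v$ lies below $p$ either $u$ does too or $u$ sits strictly above $p$), the identification of the two subcases for $p$ in Case~3 is right because the ceiling of $v=w_{i,j}$ is exactly $v_{i-1,m_{i-1}}$, and the rank argument in the second subcase goes through since both $L(p)$ and the label the operation places at $p$ lie in $M_{>}$ (for the antishuffle this uses precisely that $u$ is the lowest TDM ancestor of $p$).

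The paper takes a different, more structural route. Rather than checking each pair $(v,u)$ against each operation vertex $p$, it introduces for every non-TDM vertex $w$ the subtree $T_w$ consisting of those descendants of $w$ that stay non-TDM all the way down from $w$'s lowest TDM ancestor, and proves that $P_1$ is \emph{equivalent} to the statement that $T_w$ coincides with the set of descendants of $w$ whose label exceeds that lowest TDM ancestor's label. Once this reformulation is in hand, the lemma is immediate: a shuffle or antishuffle at $w$ permutes labels only within $T_w$ (your $M_{>}$ in the second subcase), so the equivalent condition is visibly preserved. Your proof recovers the same mechanism in Subcase~B but handles Subcase~A separately by bounding all of $M$ below $L(u)$; the paper's invariant absorbs both subcases at once. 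The trade-off is that the paper front-loads work into proving the equivalence while you distribute it across cases; the paper's version makes the invariant explicit and reusable, whereas yours is self-contained and needs no auxiliary characterization.
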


\begin{proof}
For each TDM vertex $v$, we define the subtree $T_v$ as the maximal subtree of the forest rooted at $v$ whose only TDM vertex is $v$, where maximal is in the sense of containment. For each non-TDM vertex $u$, we define the subtree $T_u$ as the tree consisting of the descendants of $u$ in $T_v$, where $v$ is the lowest TDM ancestor of $u$. We may view $T_u$ as a subtree of $T_v$, and $T_u$ only consists of non-TDM vertices. Note that this definition only depends on which vertices of the forest are TDM and which are non-TDM, and that the forest is then partitioned into the trees $T_v$ for TDM $v$. Figure \ref{p1pffig} gives an example of how $T_u$ and $T_v$ are defined, where TDM vertices have been colored black and non-TDM vertices have been colored white.

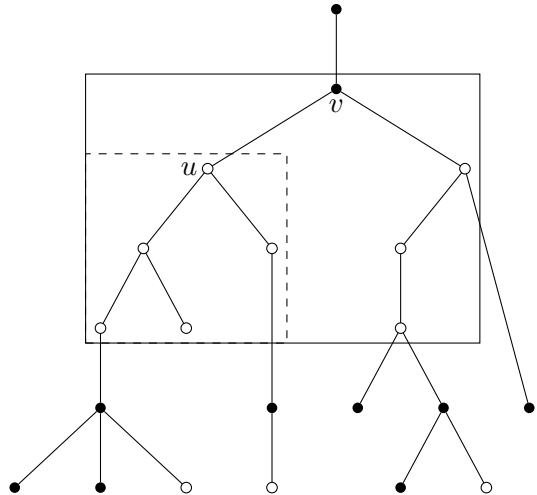
\begin{figure}[ht]
\centering
\scalebox{0.7}
{\forestset{filled circle/.style={
      circle,
      text width=4pt,
      fill,
    },}
\begin{forest}
for tree={filled circle, inner sep = 0pt, outer sep = 0 pt, s sep = 1 cm}
[,
    [, edge label={node[below]{$v$}}, tikz={\node[draw,fit=()(!111)(!l)]{};}
        [, draw, fill=white, edge label = {node[left]{$u$}}, tikz={\node[draw,dashed,fit=()(!11)(!l)]{};}
            [, draw, fill=white
                [, draw, fill=white
                    [, tier=b
                        [,
                        ]
                        [,
                        ]
                        [, draw, fill=white
                        ]
                    ]
                ]
                [, draw, fill=white
                ]
            ]
            [, draw, fill=white
                [, tier=b
                    [, draw, fill=white
                    ]
                ]
            ]
        ]
        [, draw, fill=white
            [, draw, fill=white
                [, draw, fill=white
                    [, tier=b
                    ]
                    [, tier=b
                        [,
                        ]
                        [, draw, fill=white
                        ]
                    ]
                ]
            ]
            [, tier=b]
        ]
    ]
]
\end{forest}}
\caption{The solid box encloses $T_v$, and the dashed box encloses $T_u$.}
\label{p1pffig}
\end{figure}

The condition that the forest is a $P_1$-forest is equivalent to the condition that for all non-TDM vertices $u$ with lowest TDM ancestor $v$, the descendants of $u$ in $T_u$ have greater labels than the label of $v$, which is greater than the labels of the descendants of $u$ not in $T_u$. In other words, $T_u$ consists exactly of the descendants $w$ of $u$ with $L(w)>L(v)$. Indeed, if a vertex $w$ of $T_u$ has label smaller than $v$, then it would be TDM, a contradiction. If a vertex $w$ not in $T_u$ has label greater than $v$, then the path from $u$ to $w$ contains a TDM vertex, say $x$. Then $v,u,x,w$ appear on a path in that order such that $v$ and $x$ are TDM, $u$ and $w$ are non-TDM, and $L(w)>L(v)$. This contradicts the fact that the forest is a $P_1$-forest, as the label of the ceiling of $w$ is at most the label of $v$. Every non-TDM descendant $w$ of $u$ that is not in $T_u$ is a special vertex, and every special vertex arises in this way. We then have $L(w)<L(v)$, so the special vertex $w$ has a smaller label than its ceiling, which has label at most $f(v)$. The condition that the forest is a $P_1$-forest is then satisfied, so the two conditions are equivalent.

Now, note that when we shuffle or antishuffle a non-TDM vertex $u$, we only permute labels within $T_u$. Indeed, $T_u$ contains all of the descendants of $u$ with labels greater than $L(v)$, which are the only vertices that are permuted in any shuffle or antishuffle. Since we only permute within $T_u$, after the shuffle or antishuffle, the equivalent condition that the forest is a $P_1$-forest remains satisfied, as desired.
\end{proof}

\begin{defn}
\label{segdef}
For each non-TDM vertex $v$, let the \emph{segment} of $v$ be the set of TDM ancestors $u$ of $v$ with $L(u)<L(v)$. Let the \emph{top} and \emph{bottom} of the segment of $v$ be the vertices of the segment with the least and greatest labels.
\end{defn}

We may refer to these as the top and bottom of $v$ instead of the segment of $v$. Note that the segment of $v$ necessarily consists of the TDM vertices along the path from the top of $v$ to $v$. 

\begin{defn}
\label{comparableef}
We say that a pair of vertices is \emph{comparable} if one is an ancestor of the other.
\end{defn}

We may view a forest as a Hasse diagram for a poset, which inspires the term comparable.

\begin{defn}
\label{p2def}
A \emph{$P_2$-forest} is a forest in which the following holds: whenever two comparable non-TDM vertices have segments that intersect, the top of their segments coincide.
\end{defn}

Here, $P_2$ is the key property $P$ for $(X,Y)=(3142,3124)$ mentioned at the beginning of this subsection. One way to visualize segments is to plot the labels along a path from the root to a leaf. For example, Figure \ref{p2fig} shows such a plot when if the label sequence is $8,10,6,4,7,3,9,2,5,1$, with the top-down minima, now left-right minima, colored black. The segment of a non-TDM vertex consists of the TDM vertices that have a smaller $x$- and $y$-coordinate in the plot. The forest that the label sequence in Figure \ref{p2fig} comes from is not a $P_2$-forest.

\begin{figure}[ht]
    \centering
    \scalebox{0.7}
    {\begin{tikzpicture}
        \draw[fill] (1/2.5,8/2.5) circle(2pt);
        \node at (1/2.5-0.2,8/2.5-0.2) {$v_1$};
        \draw[gray,dashed] (2/2.5,0)--(2/2.5,10/2.5)--(0,10/2.5);
        \draw (2/2.5,10/2.5) circle(2pt);
        \node at (2/2.5-0.2,10/2.5-0.2) {$v_2$};
        \draw[fill] (3/2.5,6/2.5) circle(2pt);
        \node at (3/2.5-0.2,6/2.5-0.2) {$v_3$};
        \draw[fill] (4/2.5,4/2.5) circle(2pt);
        \node at (4/2.5-0.2,4/2.5-0.2) {$v_4$};
        \draw[gray,dashed] (5/2.5,0)--(5/2.5,7/2.5)--(0,7/2.5);
        \draw (5/2.5,7/2.5) circle(2pt);
        \node at (5/2.5-0.2,7/2.5-0.2) {$v_5$};
        \draw[fill] (6/2.5,3/2.5) circle(2pt);
        \node at (6/2.5-0.2,3/2.5-0.2) {$v_6$};
        \draw[gray,dashed] (7/2.5,0)--(7/2.5,9/2.5)--(0,9/2.5);
        \draw (7/2.5,9/2.5) circle(2pt);
        \node at (7/2.5-0.2,9/2.5-0.2) {$v_7$};
        \draw[fill] (8/2.5,2/2.5) circle(2pt);
        \node at (8/2.5-0.2,2/2.5-0.2) {$v_8$};
        \draw[gray,dashed] (9/2.5,0)--(9/2.5,5/2.5)--(0,5/2.5);
        \draw (9/2.5,5/2.5) circle(2pt);
        \node at (9/2.5-0.2,5/2.5-0.2) {$v_9$};
        \draw[fill] (10/2.5,1/2.5) circle(2pt);
        \node at (10/2.5-0.2,1/2.5-0.2) {$v_{10}$};
    \end{tikzpicture}}
    \caption{The segments of $v_5$ and $v_9$ intersect, but their tops $v_3$ and $v_4$ are different.}
    \label{p2fig}
\end{figure}
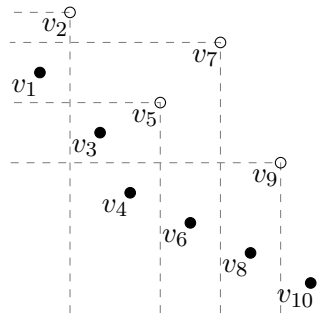

\begin{lem}
\label{123p2}
A $123$-avoiding forest avoids $3142$ if and only if it is a $P_2$-forest.
\end{lem}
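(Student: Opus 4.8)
The plan is to reduce everything to a single root-to-leaf path, since both relevant conditions --- that the forest contains an instance of $3142$, and that it is a $P_2$-forest (Definition~\ref{p2def}) --- involve only tuples of vertices lying on a common such path. So fix a root-to-leaf path $P$ in the $123$-avoiding forest and let $\sigma$ be its label sequence; then $\sigma$ avoids $123$, hence is an interleaving of two strictly decreasing subsequences, namely its left-to-right minima (which are exactly the TDM vertices on $P$) and the remaining, non-TDM, labels. In this picture (as in Figure~\ref{p2fig}), the segment of a non-TDM vertex $v$ on $P$ is the nonempty contiguous block of the chain of TDM ancestors of $v$ consisting of those of label below $L(v)$; its bottom is the lowest TDM ancestor of $v$, and its top is the highest TDM ancestor of $v$ of label below $L(v)$, equivalently the vertex of largest label in the segment. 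I will use throughout the elementary consequence of $123$-avoidance that, of two non-TDM vertices on a common path, the one nearer the root carries the larger label --- otherwise it would form a $123$ together with the left-to-right minimum immediately preceding it.

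For the direction that a $P_2$-forest avoids $3142$ I would argue the contrapositive. Suppose the forest is not a $P_2$-forest; then there are comparable non-TDM vertices $a$ and $b$, with $a$ an ancestor of $b$, whose segments meet but whose tops $t_a$ and $t_b$ are distinct. Working on a root-to-leaf path through them, I would first note that, because $L(b)<L(a)$ and the chain of TDM ancestors of $a$ is an initial part of that of $b$, the top $t_b$ of $b$'s segment is a (necessarily strict) descendant of the top $t_a$ of $a$'s segment. I would then observe that ``the segments of $a$ and $b$ intersect'' is exactly the condition that $t_b$ be a weak ancestor of the bottom of $a$'s segment, hence a strict ancestor of $a$; thus $t_a, t_b, a, b$ lie on the path in this order. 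Reading off labels along this chain gives $L(t_b)<L(b)$ (as $t_b$ is in $b$'s segment), $L(t_b)<L(t_a)$ (as $t_a$ is strictly higher in the TDM chain), $L(t_a)<L(a)$ (as $t_a$ is in $a$'s segment), and $L(t_a)>L(b)$ (as $t_a$ is a TDM ancestor of $b$ lying strictly above the top of $b$'s segment); so $L(t_b)<L(b)<L(t_a)<L(a)$ and $t_a, t_b, a, b$ is an instance of $3142$.

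For the converse direction, I would start from an instance $v_1,v_2,v_3,v_4$ of $3142$ on a root-to-leaf path, so $L(v_2)<L(v_4)<L(v_1)<L(v_3)$, and pin down the types of the four vertices. Since $v_1$ is an ancestor of $v_3$ with $L(v_1)<L(v_3)$, if $v_1$ were non-TDM the left-to-right minimum before it would complete a $123$ with $v_1,v_3$; so $v_1$ is TDM, and then $v_3$ --- having the strictly smaller-labeled ancestor $v_1$ --- is non-TDM. Since $v_2$ is an ancestor of $v_3$, the lowest TDM ancestor $r$ of $v_3$ has $L(r)\le L(v_2)<L(v_4)$, and as $r$ is also an ancestor of $v_4$ this forces $v_4$ to be non-TDM. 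Thus $v_3$ and $v_4$ are comparable non-TDM vertices. Finally I would observe that $v_1$ lies in the segment of $v_3$ (a TDM ancestor with label below $L(v_3)$), so the top of $v_3$'s segment has label at least $L(v_1)>L(v_4)$, whereas $r$, the bottom of $v_3$'s segment, has label below $L(v_4)$ and is a TDM ancestor of $v_4$, so $r$ lies in the segment of $v_4$ as well. Hence the segments of $v_3$ and $v_4$ intersect (sharing $r$) but their tops lie on opposite sides of $L(v_4)$ and are therefore distinct, so the $P_2$ condition fails.

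The step I expect to require the most care is, in the first direction, translating ``the segments of $a$ and $b$ intersect'' into ``$t_b$ is a weak ancestor of the bottom of $a$'s segment'' via the interleaving picture and then extracting the precise inequality chain $L(t_b)<L(b)<L(t_a)<L(a)$ --- the only genuine subtlety there is keeping weak and strict ancestry straight. The type analysis in the second direction (forcing $v_1$ to be TDM and $v_4$ to be non-TDM) is brief but is the one other place where $123$-avoidance is really used, so I would present it carefully rather than leave it to the reader.
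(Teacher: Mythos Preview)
Your argument is correct and follows essentially the same route as the paper's proof: in one direction you take the two tops $t_a,t_b$ together with the non-TDM pair $a,b$ to exhibit a $3142$, and in the other you extract from a $3142$ instance a comparable non-TDM pair whose segments meet but have different tops. The only spot to tighten is the clause ``Since $v_2$ is an ancestor of $v_3$, the lowest TDM ancestor $r$ of $v_3$ has $L(r)\le L(v_2)$'': this uses that $v_2$ is itself TDM (otherwise $r$ could lie strictly above $v_2$), which you have not stated. It follows immediately from $123$-avoidance, since a non-TDM $v_2$ would have the largest label among its descendants and hence $L(v_2)\ge L(v_4)$, contradicting the $3142$ order; once $v_2$ is TDM, $r$ is a weak descendant of $v_2$ and the inequality is clear. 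The paper simply observes at the outset that both $v_1$ and $v_2$ are TDM (because each has a strict descendant of larger label), which streamlines this step.
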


\begin{proof}
As before, a forest avoids $123$ if and only if every non-TDM vertex has the largest label among its descendants.

In one direction, we show that if a forest avoids $123$ and contains $3142$, then it contains a pair of comparable non-TDM vertices whose segments intersect but do not have the same top. Suppose that $v_1,v_2,v_3,v_4$ is an instance of 3142 in the forest, so $v_1,v_2,v_3,v_4$ lie in that order on a path from the root and $L(v_2)<L(v_4)<L(v_1)<L(v_3)$. Since $L(v_3),L(v_4)>L(v_2)$, $v_3$ and $v_4$ are non-TDM. Since $L(v_1),L(v_2)<L(v_3)$, $v_1$ and $v_2$ are not greater than all of their strict descendants and are thus TDM. Now the segment of $v_3$ contains $v_1$ and $v_2$, but the segment of $v_4$ contains $v_2$ but not $v_1$, so $v_3$ and $v_4$ form the desired pair of comparable non-TDM vertices.

In the other direction, we show that if a forest avoids $123$ and contains a pair of comparable non-TDM vertices whose segments intersect but do not have the same top, then it contains $3142$. Suppose that $u$ and $v$ form such a comparable pair with $u$ an ancestor of $v$. Then, since $u$ is non-TDM, we have that $L(u)>L(v)$, which means that $u$'s top is an ancestor of $v$'s top. As $u$ and $v$ have intersecting segments, $v$'s top must be contained in $u$'s segment. Let $a$ and $b$ be the tops of $u$ and $v$, respectively, so since $a$ and $b$ are TDM and $a$ is $b$'s ancestor, $L(a)>L(b)$. By assumption, $a$ does not lie in $v$'s segment and $b$ lies in $v$'s segment, so $L(a)>L(v)>L(b)$. Also, $a$ and $b$ are in $u$'s segment, so $L(u)>L(a),L(b)$. Consequently, $a,b,u,v$ appear in that order along a path from the root in the forest with $L(u)>L(a)>L(v)>L(b)$, so they form an instance of $3142$, as desired.
\end{proof}

\begin{lem}
\label{132p2}
A $132$-avoiding forest avoids $3124$ if and only if it is a $P_2$-forest.
\end{lem}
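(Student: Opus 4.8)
The plan is to mirror the proof of Lemma~\ref{123p2}, but working with the $132$-avoidance characterization used in the proof of Lemma~\ref{132p1}: every non-TDM vertex $v$ has the smallest label among its descendants whose label exceeds that of its lowest TDM ancestor. Throughout I would use that the TDM ancestors of any vertex form a chain along which labels strictly decrease with depth, so that the segment of a non-TDM vertex $v$ (Definition~\ref{segdef}) consists exactly of the TDM ancestors of $v$ lying at or below its \emph{top}, the highest TDM ancestor of $v$ with label less than $L(v)$. As in Lemma~\ref{123p2}, both implications are proved by contrapositive.

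For ``$P_2$-forest $\Rightarrow$ avoids $3124$'', I would start from a $132$-avoiding forest containing a $3124$ instance $v_1,v_2,v_3,v_4$ (in this order along a root path, with $L(v_2)<L(v_3)<L(v_1)<L(v_4)$) and show that $v_3,v_4$ witness the failure of $P_2$. Having $v_2$ as a common ancestor with a smaller label forces $v_3$ and $v_4$ to be non-TDM, and the lowest TDM ancestor of $v_3$ lies in the segments of both $v_3$ and $v_4$, so the segments intersect. To separate their tops I exhibit a TDM ancestor $z$ of $v_3$ with $L(v_3)<L(z)<L(v_4)$: take $z=v_1$ if $v_1$ is TDM, and otherwise take $z$ to be the lowest TDM ancestor of $v_1$, whose label is forced into the interval $(L(v_3),L(v_1))$ by applying $132$-avoidance to the non-TDM vertex $v_1$ and its descendant $v_3$. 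Then $z$ lies in the segment of $v_4$ but not in that of $v_3$, so the top of $v_4$ is at or above $z$ while the top of $v_3$ lies strictly below $z$; hence the tops differ and the forest is not a $P_2$-forest.

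For ``avoids $3124$ $\Rightarrow$ $P_2$-forest'', I would start from comparable non-TDM vertices $u$ (an ancestor of) $v$ whose segments intersect but whose tops $a$ and $b$ are distinct. The key preliminary observation is that $L(a)<L(v)$: if instead $L(a)>L(v)$, then $L(v)<L(a)<L(u)$, and $132$-avoidance applied to $u$ forces $L(v)$ below the label of $u$'s lowest TDM ancestor, which (since no TDM vertex lies strictly between $u$ and that ancestor) puts the entire segment of $v$ among the strict descendants of $u$ and the entire segment of $u$ among the ancestors of $u$, contradicting that they intersect. Given $L(a)<L(v)$, the vertex $a$ lies in the segment of $v$, so $b$ is a strict ancestor of $a$ and thus $L(b)>L(a)$; and since $b$ is then a TDM ancestor of $u$ not lying below $a=\text{top}(u)$, we must have $L(b)>L(u)$. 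Collecting these gives $L(a)<L(u)<L(b)<L(v)$ with $b,a,u,v$ in this order along a root path, i.e.\ a $3124$ instance.

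I expect the main obstacle to be the preliminary observation in the second implication: that in a $132$-avoiding forest, comparable non-TDM vertices with intersecting segments cannot have the top of the ancestor labeled larger than the descendant. This is the one place where the $132$-avoidance hypothesis does genuine work, and it hinges on the fact that there are no TDM vertices strictly between a non-TDM vertex and its lowest TDM ancestor, which forces the two segments onto disjoint portions of the path. The remaining inequalities in both directions are careful but routine once the chain-of-TDM-ancestors picture is set up, exactly paralleling Lemma~\ref{123p2}.
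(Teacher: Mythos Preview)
Your proof is correct and follows essentially the same approach as the paper. The second direction is essentially identical to the paper's argument (the paper phrases the preliminary observation as $L(u)<L(v)$ rather than your $L(a)<L(v)$, but the contradiction via disjoint segments is the same). In the first direction the paper instead massages the $3124$ instance so that $v_1$ and $v_2$ may be assumed TDM, whereas you directly produce a TDM witness $z$ with $L(v_3)<L(z)<L(v_4)$; this is a minor variation of the same idea and arguably cleaner.
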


\begin{proof}
As before, a forest avoids $132$ if and only if every non-TDM vertex has the smallest label among its descendants that are greater than the label of its lowest TDM ancestor, i.e. its bottom.

In one direction, we show that if a forest avoids $132$ and contains $3124$, then it contains a pair of comparable non-TDM vertices whose segments intersect but do not have the same top. Suppose that $v_1,v_2,v_3,v_4$ is an instance of 3124 in the forest, so $v_1,v_2,v_3,v_4$ lie in that order on a path from the root and $L(v_2)<L(v_3)<L(v_1)<L(v_4)$. Since $L(v_3)>L(v_2)$ and $L(v_4)>L(v_1)$, $v_3$ and $v_4$ are non-TDM. Because $L(v_1)>L(v_3)$ and $v_3$ is non-TDM, there exists a TDM vertex $v$ on the path from $v_1$ to $v_3$. Indeed, the bottom of $v_3$ necessarily lies strictly between $v_1$ and $v_3$. Suppose that there are no TDM vertices on the path between $v$ and $v_3$ other than $v$. We then have that $L(v)<L(v_3)$ so we may assume that $v_2=v$ is the lowest TDM ancestor of $v_3$. Furthermore, if $v_1$ is non-TDM then since $L(v_3)<L(v_1)$, $L(v_3)$ must be less than the label of $v_1$'s bottom, which means that we can replace $v_1$ with its bottom while preserving the 3124 order of $v_1,v_2,v_3,v_4$. Thus, we may assume that $v_1$ and $v_2$ are TDM and $v_3$ and $v_4$ are non-TDM, which means that $v_3$'s segment contains $v_2$ but not $v_1$ while $v_4$'s segment contains $v_1$ and $v_2$. Hence $v_3$ and $v_4$ form the desired pair of comparable non-TDM vertices.

In the other direction, we show that if a forest avoids $132$ and contains a pair of comparable non-TDM vertices whose segments intersect but do not have the same top, then it contains $3124$. Suppose that $u$ and $v$ forms such a comparable pair with $u$ an ancestor of $v$. We must have that $L(u)<L(v)$, as if $L(u)>L(v)$ then $L(v)$ must be smaller than the label of $u$'s bottom, which means that $v$'s top has a label smaller than $u$'s bottom so $u$ and $v$ have disjoint segments, a contradiction. As $u$ and $v$ have intersecting segments, $u$'s top must be contained in $v$'s segment. Let $a$ and $b$ be the tops of $u$ and $v$, respectively. By assumption, $b$ does not lie in $u$'s segment and $a$ lies in $u$'s segment, so $L(a)<L(u)<L(b)$. Also, $a$ and $b$ are in $v$'s segment, so $L(v)>L(a),L(b)$. Consequently, $b,a,u,v$ appear in that order along a path from the root in the forest and satisfy the inequalities $L(v)>L(b)>L(u)>L(a)$, so they form an instance of 3124, as desired.
\end{proof}

\begin{lem}
\label{shufp2}
Applying a shuffle or antishuffle to a $P_2$-forest results in a $P_2$-forest.
\end{lem}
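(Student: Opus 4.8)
The plan is to prove the stronger claim that a single shuffle or antishuffle leaves \emph{every} segment of the forest unchanged. The lemma then follows immediately: the $P_2$ condition is quantified purely over pairs of comparable non-TDM vertices and over the intersections and tops of their segments, and shuffles and antishuffles preserve both the underlying forest structure and the set of TDM vertices, so once all segments are preserved a forest satisfies $P_2$ if and only if its image does. It therefore suffices to analyze a single shuffle at a non-TDM vertex $w$; antishuffles are handled by an analogous argument.

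First I would sharpen the description of a shuffle from Definition \ref{shufdef}: shuffling $w$ not only preserves which vertices are TDM, it preserves the \emph{label} of every TDM vertex, and any label it changes belongs to a non-TDM vertex in the subtree rooted at $w$. The point is that every TDM strict descendant of $w$ has label below that of the lowest TDM ancestor of $w$, hence below $L(w)$, and one checks from the order-preserving nature of the relabeling that labels below $L(w)$ are fixed. Since segments consist only of TDM vertices, the segment of a vertex $v$ can change only when $v$ itself is a non-TDM vertex in the subtree of $w$ whose label changes; and in that case $L(v)$ moves into the interval $[L(w), M)$, where $M$ is the largest label occurring in the subtree of $w$. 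Hence each potentially affected segment can change only by gaining or losing TDM ancestors of $w$ whose labels lie in $[L(w), M)$.

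The crux is to show that in a $P_2$-forest no TDM ancestor of $w$ has label in $[L(w), M)$, for then every segment is unchanged. We may assume $M > L(w)$, as the shuffle is otherwise trivial; let $m$ be the descendant of $w$ with $L(m) = M$. Since $w$ is an ancestor of $m$ and $L(w) < M$, the vertex $m$ is non-TDM, and the segment of $w$ is contained in that of $m$; as $w$ is non-TDM this common part is nonempty, so the two segments intersect and $P_2$ forces $w$ and $m$ to share a top. Because the TDM ancestors of $w$ form a chain with labels strictly decreasing in depth, the shared-top condition identifies the shallowest TDM ancestor of $w$ with label below $L(w)$ with the shallowest one with label below $M$; comparing these forces every TDM ancestor of $w$ to have label either above $M$ or below $L(w)$, which is the claim. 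For the antishuffle one runs the same argument with $L(w)$ decreasing to the smallest descendant label exceeding that of the lowest TDM ancestor of $w$, now pairing $w$ with the descendant realizing that minimum and using the interval below $L(w)$.

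I expect the crux to be the main obstacle, since the $P_2$ hypothesis is genuinely needed: without it a shuffle can drag a shallow TDM ancestor into a segment, which is exactly what would happen to the segment of $v_5$ in the forest of Figure \ref{p2fig}, where shuffling $v_5$ pulls $v_1$ into its segment. The work lies in converting a single instance of ``the tops agree,'' applied to the well-chosen pair $(w,m)$, into the global statement that the chain of TDM ancestors of $w$ avoids the entire label window opened by the relabeling; once that is established, the rest is bookkeeping.
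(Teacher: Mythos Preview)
Your proposal is correct and follows essentially the same approach as the paper: both arguments show that every non-TDM vertex's segment is unchanged by a shuffle or antishuffle, and both do so by applying the $P_2$ hypothesis to the pair consisting of the shuffled vertex and its descendant carrying the extremal label ($M$ for a shuffle, the minimum above the bottom's label for an antishuffle). Your formulation ``no TDM ancestor of $w$ has label in $[L(w),M)$'' is just a repackaging of the paper's ``the top of each affected vertex is unchanged''---the two statements are equivalent once one notes that the TDM ancestors of $w$ have strictly decreasing labels with depth, and the derivation of either from the shared-top conclusion is the same computation.
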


\begin{proof}
It suffices to show that the segment of each non-TDM vertex does not change when we shuffle or antishuffle any non-TDM vertex, as shuffles and antishuffles do not change which vertices are TDMs. Since the segment of a non-TDM vertex only depends on its top, it suffices to show that the top of each non-TDM vertex does not change when we shuffle or antishuffle any non-TDM vertex.

When we shuffle a non-TDM vertex $v$, we replace its label $L(v)$ with the largest label $L(w)$ among the descendants of $v$. Note that if $v\ne w$, then the bottom of $v$ is in the segment of $w$. Thus, $v$ and $w$ have intersecting segments, so they have the same tops. It follows that when we replace the label of $v$ with $f(w)$, the top of $v$ does not change. The same argument works for all of the vertices that change labels during the shuffle. They are the descendants of $v$ whose labels lie between $L(v)$ and $L(w)$. All of these vertices have the same top as $v$ and $w$, and this does not change after the shuffle relabeling. The segments of the other non-TDM descendants of $v$ do not change either.

When we antishuffle a non-TDM vertex $v$, we replace its label $L(v)$ with the smallest label $L(w)$ among the descendants of $v$ that is greater than the label of the bottom of $v$. Note that if $v\ne w$, then the bottom of $v$ is in the segment of $w$. Thus, $v$ and $w$ have intersecting segments, so they have the same tops. It follows that when we replace the label of $v$ with $L(w)$, the top of $v$ does not change. The same argument works for all of the vertices that change labels during the shuffle. They are the descendants of $v$ whose labels lie between $L(v)$ and $L(w)$. All of these vertices have the same top as $v$ and $w$, and this does not change after the antishuffle relabeling. The segments of the other non-TDM descendants of $v$ do not change either.
\end{proof}

With all of these lemmas, we can finish the proof of Theorem \ref{thm71i}. 

\begin{proof}[Proof of Theorem \ref{thm71i}]
As mentioned before, to show $\{123,X\}\sim\{132,Y\}$ for some patterns $X$ and $Y$, it suffices to show that the maps $\alpha$ and $\beta$ of Garg and Peng in \cite{GP} defined earlier restrict to maps between forests avoiding $\{123,X\}$ and forests avoiding $\{132,Y\}$. To do so, it suffices to exhibit a property $P$ of forests such that a $123$-avoiding forest avoids $X$ if and only if it satisfies $P$, a $132$-avoiding forest avoids $Y$ if and only if it satisfies $P$, and $P$ is preserved by shuffles and antishuffles. For $(X,Y)=(2413,2314)$, the property is $P_1$, demonstrated by Lemmas \ref{123p1}, \ref{132p1}, and \ref{shufp1}, and for $(X,Y)=(3142,3124)$, the property is $P_2$, demonstrated by Lemmas \ref{123p2}, \ref{132p2}, and \ref{shufp2}.
\end{proof}

Since $\alpha$ and $\beta$ restrict to bijections between forests avoiding $\{123,2413\}$ and forests avoiding $\{132,2314\}$ as well as between forests avoiding $\{123,3142\}$ and forests avoiding $\{132,3124\}$, they also restrict to bijections between the intersection of these two restrictions.

\begin{cor}
We have the forest-Wilf equivalence $\{123,2413,3142\}\sim\{132,2314,3124\}$.
\end{cor}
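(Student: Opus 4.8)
The plan is to observe that the corollary follows immediately by intersecting the two restrictions already established in the proof of Theorem \ref{thm71i}. First I would note that a forest avoids $\{123,2413,3142\}$ if and only if it avoids $123$ and avoids both $2413$ and $3142$; by Lemmas \ref{123p1} and \ref{123p2} this happens exactly when it is a $123$-avoiding forest that is simultaneously a $P_1$-forest and a $P_2$-forest. Symmetrically, by Lemmas \ref{132p1} and \ref{132p2}, a forest avoids $\{132,2314,3124\}$ if and only if it is a $132$-avoiding forest that is both a $P_1$-forest and a $P_2$-forest.

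Next I would invoke the fact that $\alpha$ and $\beta$ are inverse bijections between $123$-avoiding and $132$-avoiding forests on $[n]$, together with Lemmas \ref{shufp1} and \ref{shufp2}, which say that the properties of being a $P_1$-forest and of being a $P_2$-forest are each preserved by shuffles and antishuffles, hence by $\alpha$ and $\beta$. Since the conjunction of two properties each preserved by a map is again preserved by that map, $\alpha$ restricts to a map from $123$-avoiding forests satisfying both $P_1$ and $P_2$ to $132$-avoiding forests satisfying both $P_1$ and $P_2$, and $\beta$ restricts to its inverse. By the previous paragraph this is precisely a bijection between forests on $[n]$ avoiding $\{123,2413,3142\}$ and forests on $[n]$ avoiding $\{132,2314,3124\}$, which gives $f_n(\{123,2413,3142\})=f_n(\{132,2314,3124\})$ for all $n\ge 0$, i.e.\ the claimed forest-Wilf equivalence.

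There is essentially no obstacle here: all the content is contained in the lemmas proved above, and the only thing to check is the trivial observation that avoiding a union of pattern classes is the same as lying in the intersection of the corresponding restricted families. If desired, I would phrase the general principle as follows: for patterns $X_1,\dots,X_r$ and $Y_1,\dots,Y_r$ such that each pair $(X_i,Y_i)$ is mediated by a shuffle/antishuffle-invariant property $P_i$ in the sense of Theorem \ref{thm71i}, the sets $\{123,X_1,\dots,X_r\}$ and $\{132,Y_1,\dots,Y_r\}$ are forest-Wilf equivalent via the same maps $\alpha,\beta$; the corollary is the case $r=2$ with $(X_1,Y_1)=(2413,2314)$ and $(X_2,Y_2)=(3142,3124)$.
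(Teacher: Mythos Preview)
Your proposal is correct and takes essentially the same approach as the paper: the paper simply notes that since $\alpha$ and $\beta$ restrict to bijections for each of the two pairs separately, they also restrict to a bijection on the intersection. You have just spelled this out more explicitly via the mediating properties $P_1$ and $P_2$, and your closing remark about the general $r$-fold version is a natural strengthening that the paper does not state.
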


\subsection{Forests avoiding $213$ and another pattern}
\label{gp71iii}
\hspace*{\fill} \\
We will now prove Theorem \ref{213twist} by generalizing the bijection given by Theorem \ref{gpshuf213}.

Throughout this subsection, let $\pi=\pi(1)\cdots\pi(k)$ denote a pattern of length $k$ that also satisfies $\pi(k)=\pi(k-1)+1=\pi(k-2)+2$. Suppose that $\pi(j)=1$. Note that Theorem \ref{213twist} in this case is trivial if $\pi$ contains $213$. Thus, suppose that it does not, so $\pi(1),\ldots,\pi(j-1)>\pi(j),\ldots,\pi(k)$.

\begin{defn}
\label{cpdef}
We say that a positive integer $i<j$ is a \emph{checkpoint} if the numbers $\pi(1),\ldots,\pi(i)$ are all greater than the numbers $\pi(i+1),\ldots,\pi(j)$.
\end{defn}

If the points $P_i=(i,\pi(i))$ are plotted, then $i$ is a checkpoint if we can draw two lines parallel to the coordinate axes such that $P_1,\ldots,P_i$ lie in the upper left quadrant and $P_{i+1},\ldots,P_k$ lie in the lower right quadrant formed by the lines.

\begin{figure}[ht]
    \centering
    \scalebox{0.7}
    {\begin{tikzpicture}
        \draw[gray,dashed] (2.5/2.5,0)--(2.5/2.5,11/2.5);
        \draw[gray, dashed] (0,8.5/2.5)--(11/2.5,8.5/2.5);
        \draw[gray,dashed] (5.5/2.5,0)--(5.5/2.5,11/2.5);
        \draw[gray, dashed] (0,5.5/2.5)--(11/2.5,5.5/2.5);
        \draw[fill] (1/2.5,9/2.5) circle(2pt);
        \draw[fill] (2/2.5,10/2.5) circle(2pt);
        \draw[fill] (3/2.5,6/2.5) circle(2pt);
        \draw[fill] (4/2.5,8/2.5) circle(2pt);
        \draw[fill] (5/2.5,7/2.5) circle(2pt);
        \draw[fill] (6/2.5,1/2.5) circle(2pt);
        \draw[fill] (7/2.5,5/2.5) circle(2pt);
        \draw[fill] (8/2.5,2/2.5) circle(2pt);
        \draw[fill] (9/2.5,3/2.5) circle(2pt);
        \draw[fill] (10/2.5,4/2.5) circle(2pt);
    \end{tikzpicture}}
    \caption{The checkpoints of the pattern $9,10,6,8,7,1,5,2,3,4$ are $2$ and $5$.}
    \label{cpfig}
\end{figure}
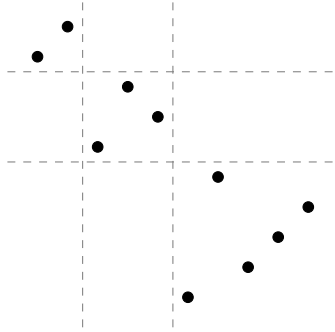

Suppose that the checkpoints of $\pi$ are $a_1<\cdots<a_m$, which is an empty list when $\pi(1)=1$ but always contains $j-1$ when $j>1$. For convenience, we define $a_0=0$.

\begin{defn}
\label{rankdef}
Let the \emph{rank} of a vertex $v$ in a forest with respect to $\pi$ be the greatest positive integer $i\le m$ such that the path from the root of the forest to $v$ contains the pattern $\pi(1)\cdots\pi(a_i)$. If $\pi$ has no checkpoints, or if such an integer $i$ does not exist, then we define the rank to be $0$.
\end{defn}

All trees and forests considered in this subsection will avoid $213$. A forest avoiding $213$ is a set of trees that avoid $213$. In a tree that avoids $213$, the labels greater than the label of the root must come before any labels less than the root on any path from the root to a leaf, a fact shown for example in \cite[Section 3.2]{GP}. Thus, we may decompose the tree into the tree of labels that are at least the label of the root, which we refer to as the \emph{large tree}. The rest of the vertices necessarily form a forest, which we refer to as the \emph{small forest}. Note that the small forest is empty when the root has the smallest label in the tree, but the large tree is always nonempty. We will also consider for a given vertex $v$ in the large tree the vertices in the small forest whose lowest ancestor in the large tree is $v$. These descendants of $v$ form a forest, which we refer to as the \emph{small subforest of $v$}.

\begin{figure}[ht]
    \centering
    \scalebox{0.7}
    {\forestset{filled circle/.style={
          circle,
          text width=4pt,
          fill,
        },}
    \begin{forest}
    for tree={filled circle, inner sep = 0pt, outer sep = 0 pt, s sep = 1 cm}
    [, fill=white
        [, fill=red, edge=white, edge label={node[above, black]{7[1]}}
            [, edge=red, fill=red, edge label={node[left,black]{10[1]}}
                [, fill=blue, edge label={node[left]{3}}, tier=b
                ]
                [, edge=red, fill=red, edge label={node[left,black]{8[2]}}
                ]
                [, edge=red, fill=red, edge label={node[right,black]{9[2]}}, tikz={\node[draw,fit=(!1)(!l1)]{};}
                    [, fill=blue, edge label={node[right]{1}}, tier=b
                        [, edge=blue, fill=blue, edge label={node[right,black]{2}}
                        ]
                    ]
                    [, fill=blue, edge label={node[left]{4}}, tier=b
                        [, edge=blue, fill=blue, edge label={node[left,black]{5}}
                        ]
                    ]
                ]
            ]
            [, fill=blue, edge label={node[right]{6}}, tier=b
            ]
        ]
    ]
    \end{forest}}
    \caption{A $213$-avoiding tree on $[10]$. The large tree on $\{7,8,9,10\}$ is colored red, the small forest on $\{1,2,3,4,5,6\}$ is colored blue, and the small subforest of the vertex labeled $9$ is boxed. Vertices in the large tree are also marked by their ranks with respect to the pattern $54123$ in brackets.}
    \label{rankfig}
\end{figure}
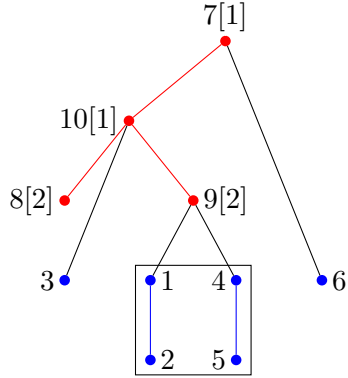

The rank of a vertex $v$ in the large tree of a tree $T$ measures the ``progress'' that the path from the root of $T$ to $v$ makes in containing an instance of $\pi$ by recording the index of the furthest checkpoint the label sequence in this path gets past. Figure \ref{rankfig} gives an example of the decomposition of a $213$-avoiding forest along with ranks of vertices in the large tree with respect to $\pi=54123$. Continuing into the small subforest of $v$, the only thing that matters towards containing $\pi$ is how much progress was already made in the large tree of $T$, i.e. the rank of $v$. Avoiding or containing $\pi$ now becomes a matter of avoiding or containing a truncation of $\pi$ in the small subforest of $v$. This is formalized in the following rank additivity lemma.

\begin{lem}
\label{rankadd}
Suppose that a vertex $v$ of the large tree of a tree $T$ has rank $r$ with respect to $T$ and $\pi$. Let $F$ be the small subforest of $v$ with respect to $T$, and suppose that a vertex $w$ of $F$ has rank $s$ with respect to $F$ and the pattern $\pi(a_r+1)\cdots\pi(k)$. Then $w$ has rank $r+s$ with respect to $T$ and $\pi$. Furthermore, the union of the path from the root of $T$ to $v$ and $F$ avoids $\pi$ if and only if $F$ avoids the pattern $\pi(a_r+1)\cdots\pi(k)$.
\end{lem}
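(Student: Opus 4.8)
The plan is to unwind the definition of rank carefully and reduce both assertions to a single combinatorial observation: any instance of $\pi$ whose first vertex lies at or above $v$ must split its first $a_r$ letters into the path above $v$ and its remaining letters below $v$ (in the small subforest), because of how checkpoints interact with the large-tree/small-forest decomposition. First I would set up notation: write $P$ for the path from the root of $T$ to $v$, so that the union in the statement is $P \cup F$ (sharing only the vertex $v$, which is the lowest vertex of $P$ and an ancestor of every vertex of $F$). I would record the key structural fact, already implicit in the decomposition: every label in $P$ is $\ge L(\text{root of }T)$, hence in particular every label in $P$ exceeds every label appearing in the small forest, and so every label of $P$ exceeds every label of $F$. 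This means that in the concatenated label sequence along any root-to-leaf path through $v$, the $P$-portion is an ``all large'' prefix and the $F$-portion is an ``all small'' suffix.

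The main step is the following claim: for a vertex $w$ of $F$, the path from the root of $T$ to $w$ contains the pattern $\pi(1)\cdots\pi(a_i)$ if and only if either $i \le r$, or $i > r$ and the path from the root of $F$ to $w$ contains $\pi(a_r+1)\cdots\pi(a_i)$. The ``if'' direction for $i\le r$ is immediate since $P$ already contains $\pi(1)\cdots\pi(a_r) \supseteq \pi(1)\cdots\pi(a_i)$ as patterns (using that checkpoints are nested and that containing a longer initial segment implies containing a shorter one — here I would invoke Definition \ref{cpdef} and Definition \ref{rankdef}). For the ``if'' direction with $i>r$: concatenate an occurrence of $\pi(1)\cdots\pi(a_r)$ in $P$ with an occurrence of $\pi(a_r+1)\cdots\pi(a_i)$ in $F$; since $a_r$ is a checkpoint, $\pi(1),\ldots,\pi(a_r)$ are all larger than $\pi(a_r+1),\ldots,\pi(a_i)$, which matches exactly the order relation ``every $P$-label exceeds every $F$-label'' established above, so the concatenation is an occurrence of $\pi(1)\cdots\pi(a_i)$. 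The ``only if'' direction is the heart: given an occurrence of $\pi(1)\cdots\pi(a_i)$ on the root-to-$w$ path with $i>r$, I would argue that it cannot lie entirely in $P$ (else $v$ would have rank $\ge i > r$, contradiction, using maximality in the definition of rank), so it must use at least one vertex of $F$; then I must show the split point is exactly after $\pi(a_r)$. Suppose the occurrence uses vertices $u_1,\ldots,u_{a_i}$ and let $t$ be the largest index with $u_t \in P$. I want $t = a_r$. We cannot have $t > a_r$: then $u_1,\ldots,u_t$ would be an occurrence of $\pi(1)\cdots\pi(t)$ in $P$ and, letting $i'$ be the largest checkpoint index with $a_{i'} \le t$, one checks $\pi(1)\cdots\pi(a_{i'})$ occurs in $P$ with $i' \ge r$; combined with the tail in $F$ past position $t\ge a_{i'}$... — actually the cleanest route is to show directly $t\ge a_r$ is forced (the $P$-part of the occurrence realizes $\pi(1)\cdots\pi(t)$, and a prefix of a pattern-occurrence that ends exactly where ``large'' labels end must extend through all positions $\le$ the last checkpoint $\le t$, forcing $t$ to be some $a_{i'}$ with $i'\le r$ by maximality of $r$; and $t < a_r$ is impossible because $\pi(a_r)$ is large relative to $\pi(a_r+1),\ldots,\pi(k)$ only up to the checkpoint, so a letter $\pi(s)$ with $t<s\le a_r$, which must be matched in $F$ hence to a small label, would have to be smaller than all of $\pi(1),\ldots,\pi(t)$ — but $\pi(s)$ need not be, since $t$ is not a checkpoint). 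I would then conclude $t = a_r$ exactly. Granting the claim, the rank statement follows by taking $i$ maximal: $w$ has $T$-rank equal to the largest $i$ with $\pi(1)\cdots\pi(a_i)$ on the root-to-$w$ path $= r + (\text{largest } s \text{ with } \pi(a_r+1)\cdots\pi(a_r+s')\ \ldots)$ — more precisely, re-indexing the checkpoints of $\pi(a_r+1)\cdots\pi(k)$ and matching them to $a_{r+1}-a_r, a_{r+2}-a_r,\ldots$ (these are exactly the checkpoints of the truncation, another small lemma I would state and verify from Definition \ref{cpdef}), we get $T$-rank of $w$ equals $r + (F\text{-rank of } w) = r+s$.

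For the ``furthermore'' clause about avoidance: the tree $P\cup F$ contains $\pi$ iff some vertex has rank $\ge m$ where $\pi$ has $m$ checkpoints — wait, more carefully, $P \cup F$ contains $\pi$ iff $\pi(1)\cdots\pi(k)$ occurs on some root-to-leaf path, and since $\pi(a_m+1)\cdots\pi(k) = \pi(j),\ldots,\pi(k)$ (the decreasing-then-structured tail after the last checkpoint $j-1$) — I would handle this by noting $\pi$ itself is the pattern $\pi(1)\cdots\pi(a_{m'})$ for the ``checkpoint'' $a_{m'} = k$ in the degenerate sense, or just argue directly: an occurrence of $\pi$ in $P\cup F$, by the same split argument with $i$ such that $a_i$-truncation is all of $\pi$ (i.e. applying the claim's proof to the full pattern rather than an initial checkpoint-segment, which works verbatim since the only property of $a_i$ used was that $a_r$ is a checkpoint and $a_i > a_r$), must split after position $a_r$, giving an occurrence of $\pi(a_r+1)\cdots\pi(k)$ in $F$; conversely such an occurrence concatenates with the rank-$r$ witness in $P$. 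Hence $P\cup F$ contains $\pi$ iff $F$ contains $\pi(a_r+1)\cdots\pi(k)$, which is the contrapositive of the avoidance statement.

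The main obstacle I anticipate is the ``only if'' direction of the claim — pinning down that the boundary between the $P$-part and the $F$-part of an occurrence falls exactly at a checkpoint, and exactly at the checkpoint $a_r$ and no earlier. The subtlety is that while ``every $P$-label exceeds every $F$-label'' forces the split to occur at a descent of $\pi$ separating large values from small values, it is the checkpoint condition (all of $\pi(1),\ldots,\pi(i)$ exceed all of $\pi(i+1),\ldots,\pi(j)$) that is needed to make the split consistent, and maximality in the definition of rank (Definition \ref{rankdef}) is what forces it to be at the $r$-th checkpoint rather than a later one. I would want to be careful that the auxiliary fact ``the checkpoints of $\pi(a_r+1)\cdots\pi(k)$ are precisely $a_{r+1}-a_r < \cdots < a_m - a_r$'' is genuinely true — this should follow directly from Definition \ref{cpdef} applied to the truncation, since a checkpoint of the truncation at position $\ell$ says $\pi(a_r+1),\ldots,\pi(a_r+\ell)$ all exceed $\pi(a_r+\ell+1),\ldots,\pi(j)$, and combined with $a_r$ being a checkpoint of $\pi$ this upgrades to $a_r+\ell$ being a checkpoint of $\pi$ — but I would double-check the converse inclusion and the edge cases $r=0$ (where the statement is nearly vacuous) and $r=m$ (where the truncation $\pi(a_m+1)\cdots\pi(k) = \pi(j)\cdots\pi(k)$ has no checkpoints, consistent with $F$-rank always $0$).
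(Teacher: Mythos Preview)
Your overall architecture matches the paper's proof: both rely on the key structural fact that every label on the path $P$ from the root to $v$ exceeds every label in the small subforest $F$, and both establish that any split of an occurrence of $\pi(1)\cdots\pi(a_i)$ between $P$ and $F$ must occur at a checkpoint. The concatenation (``if'') direction and the avoidance clause are handled the same way in both.

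There is, however, a genuine gap in your ``only if'' direction. You aim to show that the split index $t$ equals $a_r$ \emph{exactly}, and your attempted argument that $t<a_r$ is impossible is garbled and in fact wrong: the split can perfectly well occur at an earlier checkpoint $a_{i'}$ with $i'<r$. (Concretely, for $\pi=54123$ with $v$ of rank $2$, a particular occurrence on the root-to-$w$ path may have only its first vertex in $P$.) What you correctly extract is that $t$ must be \emph{some} checkpoint $a_{i'}$ with $i'\le r$ (the former because the $P$-labels dominate the $F$-labels, forcing $\{\pi(1),\ldots,\pi(t)\}$ to lie above $\{\pi(t+1),\ldots,\pi(a_i)\}$; the latter by maximality in Definition~\ref{rankdef}). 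The missing observation, which the paper uses, is that this already suffices: the $F$-portion $u_{t+1},\ldots,u_{a_i}$ is then an occurrence of $\pi(a_{i'}+1)\cdots\pi(a_i)$, and since $a_{i'}\le a_r$, this sequence contains $\pi(a_r+1)\cdots\pi(a_i)$ as a consecutive subsequence, hence as a subpattern. So the path in $F$ does contain $\pi(a_r+1)\cdots\pi(a_i)$, completing the claim without ever needing $t=a_r$. Once you drop the attempt to pin $t$ down exactly and insert this one line, your argument goes through and coincides with the paper's.

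One small setup correction: with the paper's definitions the small subforest $F$ of $v$ does not contain $v$ itself (its vertices lie in the small forest), so $P$ and $F$ are disjoint rather than overlapping at $v$; this does not affect the argument but should be stated correctly.
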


\begin{proof}
We first take care of the case that both $r$ and $s$ are positive. As defined before, we suppose that $\pi$ has checkpoints at $a_1,\ldots,a_m$. Note that $\pi(a_r+1)\cdots\pi(k)$ then has checkpoints at the indices $a_{r+1}-a_r,\ldots,a_m-a_r$. Suppose that $v_1,\ldots,v_{a_r}$ is an instance of $\pi(1)\cdots\pi(a_r)$ on the path from the root of $T$ to $v$ and that $w_1,\ldots,w_{a_{r+s}-a_r}$ is an instance of $\pi(a_r+1)\cdots\pi(a_{r+s})$ on the path from $v$ to $w$, not including $v$. Then as $a_r$ is a checkpoint, we know that $\pi(1),\ldots,\pi(a_r)$ are all greater than $\pi(a_r+1),\ldots,\pi(a_{r+s})$. As $v_1,\ldots,v_{a_r}$ are ancestors of $v$ in the large tree and $w_1,\ldots,w_{a_{r+s}-a_r}$ are in the small subforest of $v$, we have that the labels of $v_1,\ldots,v_{a_r}$ are greater than the labels of $w_1,\ldots,w_{a_{r+s}-a_r}$. Thus, $v_1,\ldots,v_{a_r},w_1,\ldots,w_{a_{r+s}-a_r}$ is an instance of $\pi(1)\cdots\pi(a_{r+s})$ and the rank of $w$ with respect to $T$ and $\pi$ is at least $r+s$. Conversely, suppose that $r+s<m$ (note that we are done if $r+s=m$) and that the rank of $w$ with respect to $T$ and $\pi$ is more than $r+s$. Then, there is an instance $v_1,\ldots,v_x,w_1,\ldots,w_y$ of $\pi(1)\cdots\pi(a_{r+s+1})$ on the path from the root of $T$ to $w$, where the $v_i$ are ancestors of $v$ and the $w_i$ are in the small subforest of $v$. As the labels of $v_1,\ldots,v_x$ are greater than the labels of $w_1,\ldots,w_y$, $x$ must be a checkpoint of $\pi$. As the rank of $v$ with respect to $T$ and $\pi$ is $r$, we have that $x\le a_r$, so $w_1,\ldots,w_y$ contains an instance of $\pi(a_r+1)\cdots\pi(a_{r+s+1})$. But $a_{r+s+1}-a_r$ is the $(s+1)$-st checkpoint of $\pi(a_r+1)\cdots\pi(k)$, contradicting the fact that the rank of $w$ with respect to $F$ and $\pi(a_r+1)\cdots\pi(k)$ is $s$. Thus, the rank of $w$ with respect to $T$ and $\pi$ is $r+s$.

The other cases are trivial or follow similarly. If $r=0$ because $\pi$ has no checkpoints (so $\pi(1)=1$), then all vertices of $T$ have rank $0$. We then have that $s$ is the rank of $w$ with respect to $\pi$ in $F$, which is $0$ as $\pi$ has no checkpoints. If $r=0$ because in the path from the root of $T$ to $v$, there is no instance of $\pi(1)\cdots\pi(a_1)$, then the proof is the same as in the case $r,s>0$. If $s=0$ because $\pi(a_r+1)\cdots\pi(k)$ has no checkpoints, then the rank of $v$ with respect to $\pi$ and $T$ is $r=m$. Then all descendants of $v$ also have rank $m$, and all vertices in $F$ have rank $0$ with respect to $F$ and $\pi(a_r+1)\cdots\pi(k)$ since it has no checkpoints. If $s=0$ because there is no instance of $\pi(a_r+1)\cdots\pi(a_{r+1})$ in the path from $v$ to $w$ excluding $v$, then the proof is the same as the proof for $r,s>0$.

The proof of the second part of the lemma is essentially the same. Any instance of $\pi$ of the form $v_1,\ldots,v_x,w_1,\ldots,w_y$ with $v_i$ in the large tree and $w_i$ in the small subforest of $v$ must satisfy the property that the labels of $v_1,\ldots,v_x$ are greater than the labels of $w_1,\ldots,w_y$. Thus, $x$ is a checkpoint of $\pi$ and is at most $a_r$ since the rank of $v$ with respect to $\pi$ and $T$ is $r$. Then, $w_1,\ldots,w_y$ must contain an instance of $\pi(a_r+1)\cdots\pi(k)$. Conversely, if $w_1,\ldots,w_{k-a_r}$ form an instance of $\pi(a_r+1)\cdots\pi(k)$ in $F$, then concatenating with $v_1,\ldots,v_{a_r}$, an instance of $\pi(1)\cdots\pi(a_r)$ on the path from the root of $T$ to $v$, gives an instance of $\pi$.
\end{proof}

The key idea behind the proof for Theorem \ref{213twist} is a recursive bijection between forests avoiding $\{213,\pi\}$ and forests avoiding $\{213,\widetilde\pi\}$ that preserves the structure of the forest (it only permutes the labels) and rank of each vertex. Lemma \ref{rankadd} allows us to transfer between avoiding $\pi$ and avoiding truncations of $\pi$ in small subforests, which allows us to inductively define such a bijection, with Theorem \ref{gpshuf213} forming our base case. We denote the structure- and rank-preserving map from forests avoiding $\{213,\pi\}$ to forests avoiding $\{213,\widetilde\pi\}$ by $f_\pi$ and the inverse map by $f_{\widetilde\pi}$. Note that it suffices to define such a bijection between trees avoiding $\{213,\pi\}$ and trees avoiding $\{213,\widetilde\pi\}$, as we can then apply this bijection to each tree in the forest. The ranks of the vertices only depend on the relative ordering of the labels within a tree, so the bijection extends properly. We define this bijection recursively on trees $T$ with the following cases, noting that whenever we write $f_\sigma$ or $f_{\widetilde\sigma}$ for some pattern $\sigma$, the last three terms of $\sigma$ are increasing consecutive integers. We may refer to applying $f_\sigma$ to a subforest $F$ of $T$, by which we mean to replace the labels of $F$ by the labels of $f_\sigma(F)$ (note this does not change the label set of $F$). We define $f_\pi$ and $f_{\widetilde\pi}$ by splitting into the following cases, where we are allowed to apply $f_\sigma$ to the forest in cases (3) and (6) by Lemma \ref{rankadd}:

\begin{enumerate}
    \item If $T$ has less than three vertices, we define $f_\pi(T)=f_{\widetilde\pi}(T)=T$ to be the identity map.
    
    \item If $\pi=123$, we define $f_\pi$ and $f_{\widetilde\pi}$ to be $\beta$ and $\alpha$, respectively, where $\alpha$ and $\beta$ are as defined in the beginning of this section.
    
    \item If $\pi(1)=1$ and the label of the root of the tree is $1$, we define $f_\pi(T)$ and $f_{\widetilde\pi}(T)$ to be the result of applying $f_\sigma$ and $f_{\widetilde\sigma}$ to the strict descendants of the root, which is a forest, where $\sigma=\pi(2)-1,\ldots,\pi(k)-1$.
    
    \item If $\pi(1)>1$ and the label of the root of the tree is $1$, we define $f_\pi(T)$ and $f_{\widetilde\pi}(T)$ to be the result of applying $f_\pi$ and $f_{\widetilde\pi}$ to the strict descendants of the root, which is a forest.
    
    \item If $\pi(1)=1$ and the label of the root is greater than $1$, we define $f_\pi(T)$ and $f_{\widetilde\pi}(T)$ to be the result of applying $f_\pi$ and $f_{\widetilde\pi}$ to both the large tree and small forest of $T$, respectively.
    
    \item If $\pi(1)>1$ and the label of the root is greater than $1$, we define $f_\pi(T)$ and $f_{\widetilde\pi}(T)$ as follows. We respectively apply $f_\pi$ and $f_{\widetilde\pi}$ to the large tree of $T$. Then for each vertex $v$ of the large tree of $T$, we do the following. If the rank of $v$ with respect to $T$ and $v$ is $r$, then we respectively apply $f_\sigma$ and $f_{\widetilde\sigma}$ to the small subforest of $v$ in $T$, where $\sigma=\pi(a_r+1)\cdots\pi(k)$.
\end{enumerate}

\begin{prop}
\label{213pi}
The maps $f_\pi$ and $f_{\widetilde\pi}$ are structure- and rank-preserving and are inverses. Furthermore, $f_\pi$ maps $\{213,\pi\}$-avoiding forests to $\{213,\widetilde\pi\}$-avoiding forests and $f_{\widetilde\pi}$ maps $\{213,\widetilde\pi\}$-avoiding forests to $\{213,\pi\}$-avoiding forests.
\end{prop}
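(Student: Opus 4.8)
The plan is to prove all four assertions at once by strong induction on the number of vertices of the forest. Since both the construction and the statement refer only to individual trees (the rank of a vertex depends only on the relative order of the labels within its tree), it suffices to treat trees, and the induction is well founded because in each recursive case (3)--(6) the maps are applied to forests with strictly fewer vertices than $T$: the strict descendants of the root in (3) and (4); the large tree and small forest in (5); and the large tree together with the small subforests in (6) (the large tree is proper since the small forest is nonempty, and each small subforest is proper since the large tree is nonempty). The base cases are immediate. In (1) the map is the identity, a forest on fewer than three vertices has no instance of a pattern of length $\ge 3$, and all ranks are $0$. In (2) we have $\pi=123$, $\widetilde\pi=132$, $f_\pi=\beta$, $f_{\widetilde\pi}=\alpha$; structure-preservation and mutual inverseness are the recalled properties of $\alpha$ and $\beta$, rank-preservation is vacuous since $123$ has no checkpoints, and the avoidance claim is precisely Theorem \ref{gpshuf213}.

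For the inductive step, cases (3)--(5) rest on local observations. In case (4), where $\pi(1)>1$, the root carries the least label and so cannot play the role of $\pi(1)$, of $\widetilde\pi(1)$, or of the ``$2$'' of $213$; being an ancestor of every vertex it can occupy no other position in an instance either, so it lies in no instance of $213$, $\pi$, or $\widetilde\pi$. Hence $T$ avoids $\{213,\pi\}$ iff the forest of its strict descendants does, and we recurse. In case (3), where $\pi(1)=1$ (so $\pi$ has no checkpoints) and the root carries the least label, the dual observation is that an instance of $\pi$ can always be rerouted through the root; thus $T$ avoids $\{213,\pi\}$ iff the forest of strict descendants avoids $\{213,\sigma\}$ with $\sigma=\pi(2)-1,\dots,\pi(k)-1$, and since deleting the leading $1$ commutes with the twist (the twist touches only the last two entries and $j\le k-2$) one has $\widetilde\sigma=\widetilde\pi(2)-1,\dots,\widetilde\pi(k)-1$, so the hypothesis applies to the shorter pattern $\sigma$. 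In case (5), where $\pi(1)=1$ and the root is not least, we decompose $T$ into its large tree and small forest; since on every root-to-leaf path all labels of the large tree precede all labels of the small forest ($213$-avoidance) and $\pi(1)$ is the least entry of $\pi$, no instance of $213$ or $\pi$ straddles the two parts, so avoidance decomposes and we recurse on each. In all of these cases the maps fix the root label (in these regimes the root is handled by (1), (3), or (4), none of which alters its label), so the large tree / small forest decomposition of $f_\pi(T)$ coincides with that of $T$.

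Case (6), where $\pi(1)>1$ and the root is not least, is the heart of the argument. We decompose $T$, apply $f_\pi$ to the large tree --- whose vertex ranks are then unchanged by the hypothesis, so the truncations $\sigma=\pi(a_r+1)\cdots\pi(k)$ attached to the small subforests are the same before and after, and the decomposition is preserved as above --- and apply $f_\sigma$ to the small subforest $F_v$ of each rank-$r$ vertex $v$. The $213$-structure forces every instance of $213$ or $\pi$ in $T$ to lie entirely within the large tree or entirely within the union of the path to a single large-tree vertex $v$ and $F_v$; Lemma \ref{rankadd} converts ``that union avoids $\pi$'' into ``$F_v$ avoids $\sigma$'', and the analogous equivalences hold with $\widetilde\pi$ and $\widetilde\sigma$ in place of $\pi$ and $\sigma$ because $\pi$ and $\widetilde\pi$ have identical checkpoints ($j\le k-2$) and $\widetilde{\pi(a_r+1)\cdots\pi(k)}=\widetilde\pi(a_r+1)\cdots\widetilde\pi(k)$. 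Feeding in the induction hypotheses for $f_\pi$ on the smaller large tree and $f_\sigma$ on the smaller small subforests gives the avoidance statement, and rank-preservation follows from the rank-additivity half of Lemma \ref{rankadd} together with rank-preservation of $f_\pi$ and $f_\sigma$. Mutual inverseness comes out of the same induction: the case of the recursion used to compute $f_\pi(T)$ is matched step for step by the corresponding case for $f_{\widetilde\pi}$ (large tree to large tree, $\sigma$ to $\widetilde\sigma$), bottoming out at $\alpha\circ\beta=\beta\circ\alpha=\mathrm{id}$.

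The step I expect to be the main obstacle is the rank bookkeeping. A vertex's rank with respect to $T$ generally differs from its rank in the subforest to which $f_\pi$ is recursively applied --- prepending a least-label root in cases (3) and (4) can raise a rank, and the decompositions in cases (5) and (6) redistribute ranks --- so one must argue that the label rearrangements performed by $f_\pi$ are confined to regions whose alteration cannot change whether any root-to-vertex path contains a checkpoint prefix $\pi(1)\cdots\pi(a_i)$. This is exactly what Lemma \ref{rankadd} and rank-preservation at smaller scales encode, so the real work is in formulating the inductive hypothesis carefully enough that it closes in case (6) and in reconciling it with the prepended root in case (4).
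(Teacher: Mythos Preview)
Your proposal is correct and follows essentially the same approach as the paper: the same case split (1)--(6), the same appeals to Theorem \ref{gpshuf213} for the base case $\pi=123$, the same use of Lemma \ref{rankadd} to transfer avoidance and rank between $T$ and the small subforests in case (6), and the same observation that the root of the large tree always carries its minimum label so the decomposition is stable under recursion. The only cosmetic difference is that you phrase the induction as a single strong induction on the number of vertices over all admissible $\pi$ simultaneously, whereas the paper speaks of a double induction on the pattern length $k$ and the number of vertices; since in cases (3) and (6) the shorter pattern $\sigma$ is again of the required form and is applied to a strictly smaller forest, your single induction is well founded and the two formulations are equivalent.
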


\begin{proof}
We proceed using induction on $k$ (the length of $\pi$) and the number of vertices in the tree to show that the defined maps are inverses, noting that the bijection clearly extends to forests with all of the claimed properties. Note that the notion of rank does not depend on $\pi(k-1)$ or $\pi(k)$, so the rank with respect to $\pi$ is the same as the rank with respect to $\widetilde\pi$.

For the base case, if the number of vertices is less than three, then the two maps are both the identity map and are obviously inverses of each other that fix structure and rank. If $\pi=123$, then Theorem \ref{gpshuf213} along with the definitions of $\alpha$ and $\beta$ implies that the defined maps are inverses of each other and preserve structure. All vertices have rank $0$ when $\pi=123$, so the base case follows.

For the inductive step, note first that the maps we define are always inverses of each other, preserve structure, and do not change the large tree of $T$. Indeed, we always either apply $f_\pi$ or $f_{\widetilde\pi}$ to a smaller forest (note the different treatment of the cases when the root has label $1$, corresponding to when the small forest is empty) or apply $f_\sigma$ or $f_{\widetilde\sigma}$, where $\sigma$ is shorter than $\pi$. We always apply $f_\sigma$ to disjoint subforests of $T$. Hence, $f_\pi$ and $f_{\widetilde\pi}$ always invert each other. Indeed, we also see by induction that whether the root of the tree is $1$ or not is also fixed by our maps, so all of the cases of the recursive definitions imply that the maps are inverses of each other. This settles all but the last case defined in the recursion, where the smaller map applied depends on the rank of vertices in $T$. Once we show that the maps preserve rank, this case also follows, and all of the claimed properties of the maps will be proved.

For the inductive step that our defined maps preserve vertex ranks, we split into the cases in our definition of $f_\pi$. Note that there is nothing to show when $\pi(1)=1$ (Cases (3) and (5)), as all vertices have rank $0$ in that case. If $\pi(1)>1$ and the label of the root of the tree is $1$ (Case (4)), then note that the rank of the root is $0$ and the rank of a vertex $v$ with respect to $\pi$ and $T$ is equal to the rank of $v$ with respect to $T$ with the root removed by Lemma \ref{rankadd}. This follows because when the root has label $1$ and $\pi(1)>1$, the root is never part of an instance of $\pi$. If $\pi(1)>1$ and the label of the root of the tree is greater than $1$ (Case (6)), then note that the ranks of all vertices in the large tree are preserved by induction, as the large tree has fewer vertices than $T$. But the ranks with respect to all of the the small subforests and their corresponding truncated patterns are also preserved by induction, so by Lemma \ref{rankadd}, the rank of a vertex of the small forest with respect to $T$ and $\pi$ is preserved as the sum of its rank with respect to the small subforest and the rank of its lowest ancestor in the large tree.

It remains to show that $f_\pi$ maps a tree avoiding $\{213,\pi\}$ to a tree avoiding $\{213,\widetilde\pi\}$. This inductive step argument is essentially the same as the previous one. When the root of $T$ has label $1$ and $\pi(1)>1$ (Case (4)), avoiding $\pi$ is equivalent to avoiding $\pi$ in the forest resulting from removing the root, as the root is never involved in instances of $\pi$. When the root of $T$ has label $1$ and $\pi(1)=1$ (Case (3)), avoiding $\pi$ is equivalent to avoiding $\pi(2)\cdots\pi(k)$ in the forest resulting from removing the root, as every instance of $\pi$ in $T$ gives an instance of $\pi(2)\cdots\pi(k)$ in the forest, while every instance of $\pi(2)\cdots\pi(k)$ in the forest gives an instance of $\pi$ in $T$ by appending the root to the beginning. When the root of $T$ has label greater than 1 and $\pi(1)=1$ (Case (5)), note that instances of $\pi$ must be entirely contained in the large tree or the small forest, since the first vertex of the occurrence must have the smallest label. Finally, when the root of $T$ has label greater than 1 and $\pi(1)>1$ (Case (6)), instances of $\pi$ occur if and only if the small subforests do not contain the corresponding truncations of $\pi$ by Lemma \ref{rankadd}. We have shown avoidance of $\pi$ to be equivalent to a combination of avoidance of $\pi$ and truncations of $\pi$ in disjoint subforests, and the recursion we have defined gives exactly the same equivalent avoidances in the corresponding subforests, so we are done.
\end{proof}

\begin{proof}[Proof of Theorem \ref{213twist}]
Proposition \ref{213pi} shows that the bijection we have defined exists, preserves structure and rank, and maps $\{213,\pi\}$-avoiding forests on $[n]$ to $\{213,\widetilde\pi\}$-avoiding forests on $[n]$, so we have that $\{213,\pi\}\sim\{213,\widetilde\pi\}$.
\end{proof}

\section{Consecutive Wilf Equivalences in Forests}
\label{cons}
In this section, we discuss various notions of c-forest-Wilf equivalence. As mentioned in Section \ref{def}, whenever we refer to instances and equivalences throughout this section, we mean consecutive instances and c-forest-Wilf equivalences, respectively. In Subsection \ref{cfam}, we construct a large family of such c-forest-Wilf equivalences between certain types of patterns we call \emph{grounded permutations} to prove Theorems \ref{cclass} and \ref{cequiv}. Subsection \ref{ground} then discusses some necessary conditions for two grounded permutations to be strongly c-forest-Wilf equivalent, following the methodology of Garg and Peng in their proof of the following theorem from \cite{GP}.

\begin{thm}[{\cite[Theorem 1.5]{GP}}]
\label{gp15}
If $\pi$ and $\pi'$ are patterns of length $k$ such that $\pi\stackrel{sc}\sim\pi'$, then $\pi(1)=\pi'(1)$ or $\pi(1)+\pi'(1)=k+1$.
\end{thm}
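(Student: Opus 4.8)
The plan is to run the forest analogue of the Goulden--Jackson cluster method, following Garg and Peng, and then to read off $\pi(1)$ from the smallest clusters. Recall that this method expresses the occurrence-weighted forest generating function $\Omega_\pi(z,u):=\sum_{n\ge 0}\bigl(\sum_{F\text{ on }[n]}u^{\#\mathrm{occ}_\pi(F)}\bigr)\frac{z^n}{n!}$ as a universal, pattern-independent functional of the forest cluster generating function $\mathcal C_\pi$, which aggregates over all \emph{clusters} --- overlapping chains of marked occurrences, together with the data of how such a chain sits inside a tree --- the contribution $\frac{1}{N!}(\#\text{labelings})\,z^N(u-1)^{\#\text{marked occurrences}}$. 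Since this functional is invertible in the relevant ring of formal power series, $\pi\stackrel{sc}\sim\pi'$ is equivalent to $\mathcal C_\pi=\mathcal C_{\pi'}$, i.e.\ to equality of all cluster coefficients, and the first few coefficients are what I would target.

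A single-occurrence cluster has $k$ vertices and contributes $\frac1{k!}z^k(u-1)$ for every $\pi$, so it is uninformative, and clusters on $k+1,\dots,2k-2$ vertices with two marked occurrences require $\pi$ to overlap itself under a short shift, which need not happen. The first data available for \emph{every} $\pi$ comes from clusters on $2k-1$ vertices made of two occurrences sharing a single vertex. In a forest such a shared vertex is either the bottom of one occurrence and the top of the other --- giving a purely linear cluster, a downward path $w_1\to\cdots\to w_{2k-1}$ with marked occurrences exactly on $w_1\cdots w_k$ and $w_k\cdots w_{2k-1}$ --- or the top of one occurrence and position $j$ of the other for some $1\le j\le k-1$, giving a cluster that branches at that vertex. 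This is where the forest and permutation settings genuinely diverge: the forest functional equation builds a tree by attaching subtrees below each vertex and at most one parent above the top of a chain, so the top and the bottom of a cluster enter asymmetrically, and (unlike in the permutation cluster method) the contributions of these differently shaped clusters stay separately accessible. The linear cluster has $\binom{\pi(1)+\pi(k)-2}{\pi(k)-1}\binom{2k-\pi(1)-\pi(k)}{k-\pi(k)}$ labelings by $[2k-1]$ --- the shared vertex must receive the value $\pi(1)+\pi(k)-1$, after which one distributes the smaller and larger values between the two halves --- and the branching cluster with branch at position $j$ has an analogous product of binomials in $\pi(1)$ and $\pi(j)$; the case $j=1$, both occurrences emanating from a common top vertex, is the cleanest, contributing $\binom{2\pi(1)-2}{\pi(1)-1}\binom{2k-2\pi(1)}{k-\pi(1)}$, which depends on $\pi$ only through $\pi(1)$.

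From $\mathcal C_\pi=\mathcal C_{\pi'}$ I would then extract this system of binomial identities and argue combinatorially that it forces $\pi'(1)\in\{\pi(1),\,k+1-\pi(1)\}$. The $j=1$ identity already confines $\pi'(1)$ to a small set --- though, since $\binom{2a-2}{a-1}\binom{2k-2a}{k-a}$ is a product of central-type binomial coefficients with indices summing to the constant $k-1$, it is not injective even modulo the reflection $a\mapsto k+1-a$, so it alone does not finish --- and the remaining branched clusters, whose counts pair $\pi(1)$ with each of the other entries $\pi(j)$, then cut the possibilities down to the reflection orbit, which is exactly complementation. Alternatively one can bypass the cluster machinery: since the occurrence polynomials $\sum_{F\text{ on }[n]}u^{\#\mathrm{occ}}$ agree for $\pi$ and $\pi'$ at every $n$, one can try to recover the same data at $n=2k-1$ by isolating, through their distinct rooted-tree shapes, the forests that are exactly one of these clusters.

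\textbf{Expected difficulties.} The two genuine obstacles are: (a) verifying, at the level of the forest functional equation, that the differently shaped clusters on $2k-1$ vertices really do contribute to separate coefficients --- this is the structural feature responsible for c-forest-Wilf being strictly weaker than c-Wilf, as in Theorem~\ref{gpcequiv}, so it must be established rather than assumed; and (b) the elementary but delicate arithmetic of ruling out accidental coincidences among products of central binomial coefficients, so that the system of identities has only the reflection-symmetric solution. The inversion of the cluster functional and the labeling counts themselves should be routine.
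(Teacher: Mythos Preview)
Your overall strategy --- invoke Theorem~\ref{gpfclust} and extract $\pi(1)$ from the $2$-clusters on $2k-1$ vertices --- is exactly the route taken by Garg and Peng, and the branching-at-the-root cluster with its count $\tfrac12\binom{2\pi(1)-2}{\pi(1)-1}\binom{2k-2\pi(1)}{k-\pi(1)}$ is indeed the heart of the matter. But both of your expected difficulties are misdiagnoses that would derail the plan as written. For (a), the claim that differently shaped clusters ``stay separately accessible'' is false, not merely unverified: Theorem~\ref{gpfclust} gives only the aggregate $r_{n,m}$, so the $j=1$ branching cluster, the $1<j<k$ branching clusters, and the $j=k$ linear cluster all feed into the single number $r_{2k-1,2}$. (Your reading of Theorem~\ref{gpcequiv} is inverted: if per-shape data \emph{were} recoverable, forest equivalence would refine permutation equivalence rather than coarsen it, since linear clusters sit among the forest clusters.) For (b), the function $p\mapsto\binom{2p-2}{p-1}\binom{2k-2p}{k-p}$ actually \emph{is} injective modulo $p\leftrightarrow k+1-p$: as in the proof of Proposition~\ref{sameh}, the sequence $\binom{2a}{a}\binom{2n-2a}{n-a}$ is strictly decreasing for $a\le n/2$, so no auxiliary identities from the other $j$'s are needed once this quantity has been isolated.

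The repair is to stop trying to separate shapes and compute the aggregate $r_{2k-1,2}$ directly. For each $j=1,\dots,k$ the number of $2$-clusters with the second instance rooted at position $j$ of the first is $\binom{\pi(1)+\pi(j)-2}{\pi(j)-1}\binom{2k-\pi(1)-\pi(j)}{k-\pi(j)}$, with an extra factor $\tfrac12$ when $j=1$ coming from the tree automorphism. As $j$ ranges over $2,\dots,k$, the value $\pi(j)$ ranges over $[k]\setminus\{\pi(1)\}$, so adding back the \emph{full} $a=\pi(1)$ term and applying Vandermonde's convolution gives $\sum_{a=1}^k\binom{\pi(1)+a-2}{a-1}\binom{2k-\pi(1)-a}{k-a}=\binom{2k-1}{k-1}$, independent of $\pi$. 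Hence $r_{2k-1,2}=\binom{2k-1}{k-1}-\tfrac12\binom{2\pi(1)-2}{\pi(1)-1}\binom{2k-2\pi(1)}{k-\pi(1)}$ depends only on $\pi(1)$, and the monotonicity above forces $\pi(1)=\pi'(1)$ or $\pi(1)+\pi'(1)=k+1$.
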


Finally, we investigate super-strong c-forest-Wilf equivalence in Subsection \ref{sstriv} and prove Theorem \ref{fssequ}.

We begin by defining some notations and terms specific to the theory of consecutive avoidance. The following notion of a \emph{forest cluster} was introduced by Garg and Peng in \cite{GP}.

\begin{defn}[{\cite[Definition 6.1]{GP}}]
\label{clusdef}
A \emph{$m$-cluster of size $n$} with respect to a pattern $\pi$ is a rooted tree on $[n]$ (a rooted forest on $[n]$ with only one component) with $m$ distinct \emph{highlighted instances} of $\pi$ such that every vertex is a part of a highlighted instance and there is no proper nonempty subset $V$ of vertices such that all highlighted instances either lie completely in $V$ or do not intersect $V$. Informally, the highlighted instances in the cluster are ``connected.''
\end{defn}

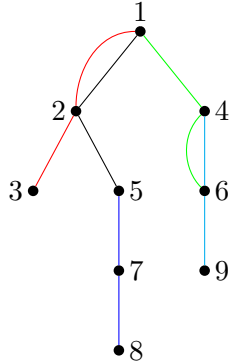
\begin{figure}[ht]
    \centering
    \scalebox{0.7}
        {\forestset{filled circle/.style={
          circle,
          text width=4pt,
          fill,
        },}
    \begin{forest}
    for tree={filled circle, inner sep = 0pt, outer sep = 0 pt, s sep = 1 cm}
    [, fill=white
        [, edge=white, edge label={node[above,black]{1}}, name=v1
            [, edge label={node[left,black]{2}}, name=v2
                [, edge=red, edge label={node[left,black]{3}}
                ]
                [, edge label={node[right,black]{5}}
                    [, edge=blue, edge label={node[right,black]{7}}
                        [, edge=blue, edge label={node[right,black]{8}}
                        ]
                    ]
                ]
            ]
            [, edge=green, edge label={node[right,black]{4}}, name=v4
                [, edge=cyan, edge label={node[right,black]{6}}, name=v6
                    [, edge=cyan, edge label={node[right,black]{9}}
                    ]
                ]
            ]
        ]
    ]
    \path[red,out=180,in=90] (v1.child anchor) edge (v2.parent anchor);
    \path[green,out=-135,in=135] (v4.child anchor) edge (v6.parent anchor);
    \end{forest}}
    \caption{A $5$-cluster of size $9$ with respect to $123$. The instances are the colored paths on $\{1,2,3\}$, $\{1,2,5\}$, $\{1,4,6\}$, $\{4,6,9\}$, and $\{5,7,8\}$.}
    \label{clusfig}
\end{figure}

Figure \ref{clusfig} shows a forest cluster with respect to the pattern $123$, where we use colored paths to denote the different highlighted instances. We do not require that every instance is highlighted. Indeed, note that the instance $2,5,7$ is not highlighted. If it were, then we would have a different forest cluster.

Forest clusters are the analogue of permutation clusters for forests. General cluster methods were introduced by Goulden and Jackson in \cite{GJ} and applied to permutation consecutive pattern avoidance by Elizalde and Noy in \cite{EN}. Forest clusters were introduced by Garg and Peng in \cite{GP}, where their connection to strong equivalence was shown through the following theorem.

\begin{thm}[{\cite[Theorem 6.2]{GP}}]
\label{gpfclust}
Two patterns $\pi$ and $\pi'$ are strongly equivalent if and only if for all $m,n\ge0$, the number of $m$-clusters of size $n$ with respect to $\pi$ is equal to the number of $m$-clusters of size $n$ with respect to $\pi'$.
\end{thm}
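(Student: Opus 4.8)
The plan is to adapt the Goulden--Jackson cluster method, in the form used by Elizalde and Noy for permutations, with the forest structure absorbed by the exponential formula. For a pattern $\pi$ let $A^\pi_{n,m}$ be the number of forests on $[n]$ with exactly $m$ consecutive instances of $\pi$, and set $P_\pi(x,y)=\sum_{n,m}A^\pi_{n,m}\frac{x^n}{n!}y^m$; strong equivalence of $\pi$ and $\pi'$ is exactly $P_\pi=P_{\pi'}$. The standard trick is to pass to $\widehat P_\pi(x,t):=P_\pi(x,1+t)$: expanding $(1+t)^m=\sum_{S}t^{|S|}$ over subsets $S$ of the set of instances, $\widehat P_\pi$ becomes the generating function $\sum_{(T,S)}\frac{x^{|T|}}{|T|!}t^{|S|}$ over pairs of a forest $T$ together with a \emph{marked} subset $S$ of its consecutive instances of $\pi$. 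Since $t\mapsto t-1$ recovers $P_\pi$ from $\widehat P_\pi$, it suffices to prove a \emph{pattern-independent, invertible} relation between $\widehat P_\pi$ and the cluster generating function $C_\pi(x,t):=\sum_{n\ge 1,\,c\ge 1}(\#\ c\text{-clusters of size }n\text{ w.r.t.\ }\pi)\frac{x^n}{n!}t^c$. Given such a relation $\widehat P_\pi=\Phi(C_\pi)$ with $\Phi$ a bijection on the relevant formal power series and independent of $\pi$, the chain ``$A^\pi_{n,m}=A^{\pi'}_{n,m}$ for all $n,m$'' $\iff P_\pi=P_{\pi'}\iff\widehat P_\pi=\widehat P_{\pi'}\iff C_\pi=C_{\pi'}\iff$ ``all cluster numbers agree'' proves both implications of the theorem at once.

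Next I would set up the decomposition behind $\Phi$. Given a marked pair $(T,S)$, form the \emph{overlap graph} on $S$, joining two instances when they share a vertex; since consecutive instances are downward paths, the union of an overlap-connected family of instances is a connected subtree of $T$. One checks that each connected component of the overlap graph spans a subtree which, with its induced marked instances, is a cluster in the sense of Definition~\ref{clusdef} --- the ``no proper nonempty subset'' condition holds because a violating vertex set would have to contain a vertex lying simultaneously on an instance inside it and an instance outside it --- and that distinct components give vertex-disjoint clusters. The vertices of $T$ belonging to no instance of $S$ are declared \emph{free}. Thus $V(T)$ is partitioned into free vertices and the vertex sets of a family of clusters, and $(T,S)$ can be reconstructed from (i) a forest structure on the set of pieces $\{$free vertices$\}\cup\{$clusters$\}$ recording how they nest, together with, for each cluster, which of its vertices is the parent-attachment of each piece hanging below it, (ii) the internal structure and marking of each installed cluster, and (iii) the interleaving of the labels. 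Because no marked instance straddles two clusters, reassembly never forces an unwanted merge, so this is a bijection. Translating it with the exponential formula (forest $=$ set of rooted trees, and below every vertex --- whether free or a vertex of a cluster --- hangs a marked forest) yields the functional equation
\[
u=x\exp\bigl(u+C_\pi(u,t)\bigr),\qquad \widehat P_\pi(x,t)=u/x,
\]
where the substitution of $u$ into the first argument of $C_\pi$ encodes the species composition ``cluster with a marked forest installed below each of its vertices.'' This $\Phi\colon C_\pi\mapsto\widehat P_\pi$ is pattern-independent, and it is invertible because $u=x+\cdots$: from $\widehat P_\pi$ one recovers $u=x\widehat P_\pi$ and hence $C_\pi(u,t)=\log\widehat P_\pi-x\widehat P_\pi$, which determines $C_\pi$ as a power series in its first argument.

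I expect the crux of the work to be making the reassembly map an honest bijection with the correct label bookkeeping, which is genuinely heavier than in the permutation case: a forest cluster can branch (as in Figure~\ref{clusfig}), so unlike a permutation cluster it is not a linear gadget with two endpoints but a subtree that admits a separate hanging slot below \emph{each} of its vertices, and the functional equation must account for all of them with the right $1/n!$ factors --- formally this is the composition $C_\pi$ with the species ``vertex $\times$ marked forest,'' and verifying that composition really models ``install a marked forest below every cluster-vertex'' is where the care lies. Once the bijection and the identity $\widehat P_\pi=\Phi(C_\pi)$ are in hand, the rest --- solving the functional equation by Lagrange inversion, checking $\Phi$ is invertible, and reading off the equivalence --- is routine. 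A sanity check for the whole scheme: at $t=0$ no instances are marked, $C_\pi(u,0)=0$, and the identity degenerates to $u=xe^{u}$ with $\widehat P_\pi(x,0)=u/x=e^{u}$, i.e.\ the rooted-tree and rooted-forest exponential generating functions, as it must.
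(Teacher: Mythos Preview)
The paper does not actually prove this theorem; it is quoted from Garg and Peng \cite[Theorem 6.2]{GP} and used as a black box. The only hint the paper gives about the original argument is the remark in the proof of Lemma~\ref{ptostrong} that it ``mimick[s] the proof of Theorem~\ref{gpfclust} that was given in \cite{GP},'' which there takes the form of a recursive decomposition of clusters. So there is no in-paper proof to compare your proposal against.

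That said, your proposal is a correct proof. The passage from $P_\pi$ to $\widehat P_\pi=P_\pi(x,1+t)$ and the interpretation of $\widehat P_\pi$ as counting pairs (forest, subset of marked instances) is the standard Goulden--Jackson maneuver. Your decomposition of a marked forest into vertex-disjoint clusters plus free vertices is sound: overlap-connected families of marked paths do span subtrees satisfying Definition~\ref{clusdef}, and distinct components are vertex-disjoint for exactly the reason you give. The functional equation also checks out: with $u=x\widehat F$ one has $\widehat T=u+C_\pi(u,t)$ (the two summands corresponding to ``root free'' and ``root in a cluster''), whence $u=x e^{\widehat T}=x\exp(u+C_\pi(u,t))$ and $\widehat P_\pi=\widehat F=u/x$. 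Your inversion $C_\pi(u,t)=\log\widehat P_\pi-x\widehat P_\pi$ together with $u=x+O(x^2)$ recovers $C_\pi$ from $\widehat P_\pi$, so the map $\Phi$ is indeed invertible and pattern-independent, completing both directions.

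One small clarification worth making explicit in a final write-up: $u$ is \emph{not} the marked-tree EGF $\widehat T$ but rather $x\widehat F$; the marked-tree EGF is $\widehat T=u+C_\pi(u,t)$. Your sanity check at $t=0$ is consistent with this. This is exactly the forest adaptation of the Elizalde--Noy cluster method, and is presumably close in spirit to what \cite{GP} does, with your version phrased via a single functional equation rather than an explicit recursion on coefficients.
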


Equivalences between patterns are typically proven by showing strong equivalence via the cluster method for both permutations and forests. The proof of Theorem \ref{gpcequiv} in \cite{GP} proceeds by showing that the number of clusters for $1324$ and $1423$ satisfy the same recursion. Our results in this section also follow this method. It would be interesting to see a proof of equivalence without appealing to strong equivalence.

We can refine the notion of equivalence by specifying not only the number of vertices in the cluster but the structure of the tree.

\begin{defn}
\label{clustrucdef}
Two patterns $\pi$ and $\pi'$ satisfy \emph{cluster structure equivalence} if for all (unlabeled) rooted trees $T$ and integers $m\ge0$, the number of clusters on $T$ with $m$ highlighted instances of $\pi$ is equal to the the number of clusters on $T$ with $m$ highlighted instances of $\pi'$.
\end{defn}

We can also extend the notion of \emph{super-strong} equivalence, introduced by Dwyer and Elizalde for permutations in \cite{DE}, to forests.

\begin{defn}
Two patterns $\pi$ and $\pi'$ of length $k$ are \emph{super-strongly} equivalent if for all unlabeled rooted forests $F$ on $n$ vertices with a set $S$ of highlighted paths of length $k$, the number of ways to label the vertices of $F$ with distinct labels in $[n]$ such that $S$ is the set of instances of $\pi$ in $F$ is equal to the number of ways to label the vertices of $F$ with distinct labels in $[n]$ such that $S$ is the set of instances of $\pi'$ in $F$.
\end{defn}

\subsection{A large family of nontrivial equivalences}
\label{cfam}
\hspace*{\fill} \\
In this section, we prove Theorems \ref{cclass} and \ref{cequiv}, which state that there is an equivalence class of size at least $2^{n-4}$ for all $n$ and that a $(1-o(1))^n$-fraction of patterns of length $n$ satisfy a nontrivial equivalence. We do so by first demonstrating a family of nontrivial equivalences through a series of lemmas. Then, we use the properties of this family to obtain Theorems \ref{cclass} and \ref{cequiv} as a result.

First, we define some notions related to a certain class of permutations we will consider. Throughout this subsection, we assume that $\pi=\pi(1)\cdots\pi(k)$ is a pattern of length $k$ that is not the identity.

\begin{defn}
\label{streakdef}
The \emph{streak} of a pattern $\pi=\pi(1)\cdots\pi(k)$ is the largest integer $m$ such that $\pi(i)=i$ for all $1\le i\le m$. 
\end{defn}

Note that if $\pi(1)\ne1$, then the streak of $\pi$ is $0$.

\begin{defn}
\label{heightdef}
For $1<i\le k$, let the \emph{height} of $\pi(i)$ in $\pi$ be the largest integer $m<i$ such that $\pi(i-m+1)<\ldots<\pi(i)$, i.e. the length of the longest consecutive increasing subsequence beginning after $\pi(1)$ and ending at $\pi(i)$. For $\pi(1)\ne j\in[k]$, let the height of $j$ in $\pi$ be the height of $\pi(i)$ in $\pi$, where $\pi(i)=j$. Let the \emph{max height} of $\pi$, be the maximum of the height of $\pi(i)$ over all $1<i\le k$.
\end{defn}

\begin{exmp}
\label{streakex}
The pattern $\pi=123485679$ has streak $4$ and max height $4$, since the heights of $8$ and $9$ in $\pi$ are $4$ due to the maximal increasing consecutive subsequences $2348$ and $5679$. Note that $12348$ is an increasing consecutive subsequence of length $5$, but we do not count it because it starts at the first element of $\pi$.
\end{exmp}

We will be working with a more general class of clusters. Note that the definition of clusters extends to clusters with respect to a set of more than one pattern in the obvious way: we have a rooted tree with some highlighted instances of patterns in the set such that it is not possible to partition the vertices of the tree into two parts so that each highlighted instance has vertices in at most one part.

\begin{defn}
\label{pseudodef}
A \emph{$p$-pseudo $m$-cluster of size $n$} with respect to $\pi$ is an $m$-cluster of size $n$ with respect to the patterns $\{\pi(1)\cdots\pi(k),\pi(2)\cdots\pi(k),\ldots,\pi(p)\cdots\pi(k)\}$ such that each highlighted instance of a pattern that is not $\pi$ contains the root of the cluster when the cluster is viewed as a tree.
\end{defn}

\begin{defn}
\label{primdef}
A \emph{primitive} $m$-cluster of size $n$ with respect to $\pi$ is an $m$-cluster of size $n$ in which all highlighted instances of $\pi$ contain the root of the cluster. A \emph{primitive} $p$-pseudo $m$-cluster of size $n$ with respect to $\pi$ is a $p$-pseudo $m$-cluster of size $n$ in which all highlighted instances of a pattern contains the root of the cluster.
\end{defn}

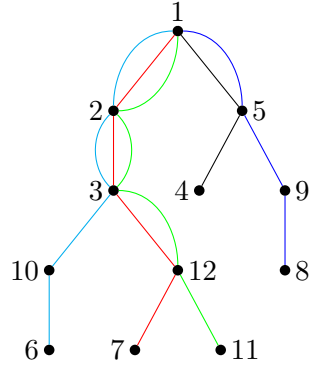
\begin{figure}[ht]
    \centering
    \scalebox{0.7}
    {\forestset{filled circle/.style={
      circle,
      text width=4pt,
      fill,
    },}
    \begin{forest}
    for tree={filled circle, inner sep = 0pt, outer sep = 0 pt, s sep = 1 cm}
    [, fill=white
        [, edge=white, edge label={node[above,black]{1}}, name=v1
            [, edge=red, edge label={node[left,black]{2}}, name=v2
                [, edge=red, edge label={node[left,black]{3}}, name=v3
                    [, edge=cyan, edge label={node[left,black]{10}}, name=v10
                        [, edge=cyan, edge label={node[left,black]{6}}, name=v6
                        ]
                    ]
                    [, edge=red, edge label={node[right,black]{12}}, name=v12
                        [, edge=red, edge label={node[left,black]{7}}, name=v7
                        ]
                        [, edge=green, edge label={node[right,black]{11}}, name=v11
                        ]
                    ]
                ]
            ]
            [, edge label={node[right,black]{5}}, name=v5
                [, edge label={node[left,black]{4}}, name=v4
                ]
                [, edge=blue, edge label={node[right,black]{9}}, name=v9
                    [, edge=blue, edge label={node[right,black]{8}}, name=v8
                    ]
                ]
            ]
        ]
    ]
    \path[blue,out=0,in=90] (v1.child anchor) edge (v5.parent anchor);
    \path[cyan,out=180,in=90] (v1.child anchor) edge (v2.parent anchor);
    \path[cyan,out=-135,in=135] (v2.child anchor) edge (v3.parent anchor);
    \path[green,out=-90,in=0] (v1.child anchor) edge (v2.parent anchor);
    \path[green,out=-45,in=45] (v2.child anchor) edge (v3.parent anchor);
    \path[green,out=0,in=90] (v3.child anchor) edge (v12.parent anchor);
    \end{forest}}
    \caption{A primitive $3$-pseudo $5$-cluster of size $12$ with respect to $12354$. The instances are the colored paths on $\{1,2,3,10,6\}$, $\{1,2,3,12,7\}$, $\{1,2,3,12,11\}$, $\{1,5,4\}$, and $\{1,5,9,8\}$. The blue and black paths, respectively on $\{1,5,9,8\}$ and $\{1,5,4\}$, are instances of $2354$ and $354$.}
    \label{pseudofig}
\end{figure}

When speaking generally, we may omit some of $m,n,p$. We note that (primitive) $1$-pseudo clusters are simply (primitive) clusters, and that for $i<j$, all (primitive) $i$-pseudo clusters are (primitive) $j$-pseudo clusters. In a primitive $m$-cluster, there are exactly $m$ leaves, all at depth $k$.

\begin{defn}
\label{clusheightdef}
In a primitive pseudo cluster $C$, for each vertex $v$ that is not the root of the cluster, the \emph{height} of $v$ in $C$ is the largest integer $m$ such that there exist vertices $v_1,\ldots,v_m=v$, none of which is the root of $C$, such that the labels of $v_1,\ldots,v_m$ are increasing and for all $1\le i<m$, $v_{i+1}$ is $v_i$'s child. In other words, the height of a vertex is the length of the longest consecutive sequence of vertices different from the root of $C$ with increasing labels.
\end{defn}

Note that if $v$ is the $i$th vertex in a highlighted instance of $\pi$ in the cluster, then the height of $v$ in $C$ is the same as the height of $\pi(i)$ in $\pi$. However, the notions of height do not necessarily coincide for vertices not in a highlighted instance of $\pi$.

\begin{defn}
\label{pequivdef}
Two permutations $\pi$ and $\pi'$ of length $k$ are \emph{$p$-equivalent} if for all nonnegative integers $\ell,m,n$ with $\ell\in[n]$ and functions $f:[n]\setminus\{\ell\}\rightarrow[k]$, the number of primitive $p$-pseudo $m$-clusters $C$ of size $n$ whose root is labeled $\ell$ such that for all $i\in[n]\setminus\{\ell\}$, $f(i)$ is equal to the height of the vertex with label $i$ in $C$, is equal for $\pi$ and $\pi'$.
\end{defn}

\begin{defn}
\label{opcdef}
An \emph{ordered primitive cluster} is a primitive cluster whose underlying rooted tree is an ordered rooted tree, in the sense that the children of any vertex are ordered, so if labels between the children of a vertex are permuted a different labeled rooted tree is obtained.
\end{defn}

Thus far, we have been working with unordered forests. We will only consider ordered (i.e. planar) forests in the context of ordered primitive clusters. Note that given the structure of the underlying tree, the ratio of the number of ordered clusters to the number of unordered clusters is the number of automorphisms of the tree, so the notions are closely related.

\begin{defn}
\label{psequivdef}
Two permutations $\pi$ and $\pi'$ of length $k$ are \emph{primitive structure equivalent} if for all nonnegative integers $m$ and $n$ and unlabeled rooted trees $T$ on $n$ vertices with $m$ leaves, the number of labellings of $T$ that are ordered primitive clusters for $\pi$ is equal to the number of labellings of $T$ that are ordered primitive clusters for $\pi'$.
\end{defn}

The family of equivalences we will describe involve a class of permutation we call the \emph{grounded permutations}.

\begin{defn}
\label{groundef}
A permutation $\pi=\pi(1)\cdots\pi(k)\ne1\cdots k$ is \emph{grounded} if there exists a positive integer $m$ such that $\pi(1)\cdots\pi(m)=1\cdots m$ and $\pi(2)\cdots\pi(k)$ avoids the pattern $1\cdots(m+1)$ in the consecutive sense of permutation pattern avoidance.
\end{defn}

In other words, a pattern $\pi$ is grounded if it is not the identity and its max height is at most its streak. Since the max height of $\pi$ is at least its streak when $\pi\ne1\cdots k$, we can equivalently say that $\pi$ is grounded if it is not equal to $1\cdots k$ and its max height is equal to its streak. The heights of the elements in $\pi$ are then bounded above by its streak, hence the term grounded.

The description of the family of equivalences proceeds with a long series of lemmas. We first describe a decomposition of clusters for grounded $\pi$ into pseudo clusters.

\begin{defn}
\label{pcdecompdef}
Suppose that $\pi$ is a grounded pattern with streak $s$. Then for any $1\le p\le s$, any $p$-pseudo $m$-cluster $C$ with respect to $\pi$, we define the \emph{standard decomposition} of $C$ as $(P,\mathcal Q)$, where
\begin{enumerate}
    \item $P$ is the primitive $p$-pseudo cluster consisting of all highlighted instances of patterns that contain the root of $C$, and
    \item $\mathcal Q$ is the collection of the pseudo clusters $C_v$ over all non-root vertices $v$ of $P$. Here, we define $C_v$ to be the $h$-pseudo cluster rooted at $v$ consisting of the vertices in $C$ whose lowest ancestor in $P$ is $v$, where $H$ is the height of $v$ in $P$.
\end{enumerate}
\end{defn}

\begin{figure}[ht]
    \centering
    \scalebox{0.7}
    {\forestset{filled circle/.style={
      circle,
      text width=4pt,
      fill,
    },}
    \begin{forest}
    for tree={filled circle, inner sep = 0pt, outer sep = 0 pt, s sep = 1 cm}
    [, fill=white
        [, edge=white, edge label={node[above,black]{1}}, name=v1
            [, edge label={node[above,black]{2}}, name=v2
                [, edge=red, edge label={node[left,black]{4}}, name=v4
                    [, edge=red, edge label={node[left,black]{6}}, name=v6
                        [, edge=red, edge label={node[left,black]{5}}, name=v5
                        ]
                    ]
                    [, edge=yellow!60!red, edge label={node[left,black]{24}}, name=v24
                        [, edge=yellow!60!red, edge label={node[left,black]{26}}, name=v26
                            [, edge=yellow!60!red, edge label={node[left,black]{25}}, name=v25
                            ]
                        ]
                    ]
                ]
                [, edge label={node[right,black]{9}}, name=v9
                    [, edge label={node[left,black]{8}}, name=v8
                    ]
                    [, edge=blue, edge label={node[left,black]{19}}, name=v19
                        [, edge=blue, edge label={node[left,black]{23}}, name=v23
                            [, edge=blue, edge label={node[left,black]{22}}, name=v22
                            ]
                        ]
                    ]
                    [, edge=cyan, edge label={node[right,black]{21}}, name=v21
                        [, edge=cyan, edge label={node[right,black]{12}}, name=v12
                        ]
                    ]
                ]
            ]
            [, edge=black!60!white, edge label={node[right,black]{3}}, name=v3
                [, edge=black!60!white, edge label={node[right,black]{20}}, name=v20
                    [, edge=black!60!white, edge label={node[right,black]{13}}, name=v13
                    ]
                ]
            ]
            [, edge=black!30!white, edge label={node[right,black]{15}}, name=v15
                [, edge=black!30!white, edge label={node[right,black]{7}}, name=v7
                    [, edge=green, edge label={node[right,black]{10}}, name=v10
                        [, edge=green, edge label={node[right,black]{16}}, name=v16
                            [, edge=green, edge label={node[right,black]{14}}, name=v14
                            ]
                        ]
                    ]
                    [, edge=black!60!green, edge label={node[right,black]{11}}, name=v11
                        [, edge=black!60!green, edge label={node[right,black]{18}}, name=v18
                            [, edge=black!60!green, edge label={node[right,black]{17}}, name=v17
                            ]
                        ]
                    ]
                ]
            ]
        ]
    ]
    \path[cyan,dashed,out=0,in=90] (v2.child anchor) edge (v9.parent anchor);
    \end{forest}}
    \caption{The standard decomposition of a $2$-pseudo cluster with respect to $1243$. The highlighted instances on $\{1,2,9,8\}$, $\{1,3,20,13\}$, and $\{1,15,7\}$ in the primitive portion $P$ are colored with different shades of grey. The $C_v$ are colored in different shades of red, blue, and green. The red pseudo cluster consists of instances $2,4,6,5$ and $4,24,26,25$, the blue pseudo cluster consists of instances $9,19,23,22$ and $9,21,12$, and the green pseudo cluster consists of instances $7,10,16,14$ and $7,11,18,17$. Note that the vertex labeled $2$ is not in the blue pseudo cluster corresponding to the vertex labeled $9$ in the decomposition, but it is a part of the highlighted instance $2,9,21,12$ in the whole pseudo cluster.}
    \label{standecfig}
\end{figure}
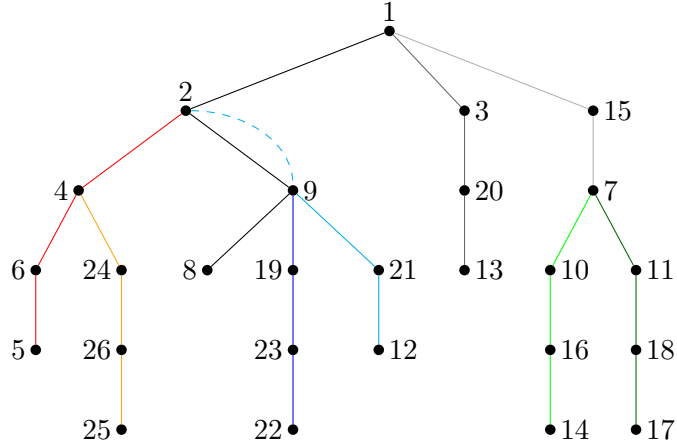

We will show the existence and uniqueness of the standard decomposition in Lemma \ref{pcdecomp}, along with some other properties. To do so, we first show an intermediate result.

\begin{lem}
\label{rootsmall}
Let $\pi$ be a grounded pattern with streak $s$. In any $p$-pseudo cluster with respect to $\pi$ with $p\le s$, the root has label $1$.
\end{lem}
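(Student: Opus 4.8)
The plan is to argue by contradiction using the connectedness requirement in Definition \ref{clusdef}. Suppose $\pi$ is grounded with streak $s$; by Definition \ref{groundef} this gives $s\ge 1$, hence $\pi(1)=1$, and also $s\le k-1$ since $\pi\neq 1\cdots k$. Fix a $p$-pseudo cluster $C$ with $p\le s$ and assume, for contradiction, that the root $r$ of $C$ does not carry the label $1$. Let $w$ be the (unique) vertex of $C$ with label $1$; then $w$ is a strict descendant of $r$, and the subtree $T_w$ of $C$ rooted at $w$ has vertex set $V$ that is nonempty (it contains $w$) and proper (it omits $r$). I will show that every highlighted instance of $C$ lies entirely in $V$ or is disjoint from $V$, which contradicts the definition of a cluster and forces $r=w$.

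The crux is the claim that any highlighted instance $I$ meeting $T_w$ is contained in $T_w$. Recall from the start of Section \ref{cons} that ``instance'' here means consecutive instance, so $I$ is a downward path with a unique topmost vertex $t(I)$, an ancestor of every vertex of $I$; it therefore suffices to show $t(I)\in T_w$. Suppose not. Pick $x\in I\cap T_w$. Then $t(I)$ and $w$ are both ancestors of $x$, hence comparable, and since $t(I)\notin T_w$ the vertex $t(I)$ must be a strict ancestor of $w$. As $I$ is a consecutive path from $t(I)$ down through $x$, it contains the whole tree-path from $t(I)$ to $x$, and this path passes through $w$; consequently $w\in I$ and $w$ sits at a position $\ge 2$ in $I$ (it is strictly below $t(I)$).

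It remains to rule this out in the two kinds of highlighted instances, which is exactly where groundedness and the hypothesis $p\le s$ are used. If $I$ is a full highlighted instance of $\pi$, then since $\pi(1)=1$ the smallest label of $I$ occurs at its top; but $w$ has the globally smallest label, so $w$ would be the top of $I$, contradicting that $w$ is at a position $\ge 2$. If instead $I$ is a highlighted instance of a truncation $\pi(j)\cdots\pi(k)$ with $2\le j\le p\le s$, then by Definition \ref{pseudodef} $I$ contains the cluster root $r$, and since $r$ is the tree root it must equal $t(I)$; moreover $j\le s$ gives $\pi(1)\cdots\pi(j-1)=1\cdots(j-1)$, so $\pi(j)=j$ is the minimum among the values $\pi(j),\ldots,\pi(k)$, whence $r=t(I)$ carries the smallest label of $I$. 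Since $w\in I$ has label $1$, this forces $L(r)=1$, i.e. $r=w$, contradicting $r\neq w$. In either case we reach a contradiction, proving the claim; taking $V$ to be the vertex set of $T_w$ then yields the contradiction with Definition \ref{clusdef} and completes the argument. (Note that the hypothesis $s\le k-1$ guarantees each truncation $\pi(j)\cdots\pi(k)$ with $j\le s$ has length at least $2$, so ``topmost vertex'' and ``smallest label'' are meaningful.)

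I expect the only delicate point to be the geometric step in the second paragraph: correctly deducing from the consecutive-path structure that an instance straddling the boundary of $T_w$ must pass through $w$ itself and also contain a strict ancestor of $w$. Everything afterward is short, and the role of $p\le s$ is simply to ensure that the first entry of every admissible truncation pattern is the minimum of its remaining entries.
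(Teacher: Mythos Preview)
Your proof is correct and takes a genuinely different route from the paper's. The paper argues by analyzing an arbitrary pair of overlapping (possibly truncated) instances and showing directly that the higher-starting one carries the smaller labels; it then invokes connectedness to conclude the root has the globally smallest label. You instead locate the vertex $w$ with label $1$, and use the observation that in every highlighted instance (full or truncated with $j\le s$) the top vertex carries the minimum label, so no instance can properly straddle the boundary of $T_w$; the subtree at $w$ would then disconnect the cluster unless $w$ is the root. Your argument is shorter and more conceptual for this particular lemma. The paper's approach, on the other hand, extracts more: the inequality ``$j$ is at most the height of $\pi(i)$'' for overlapping instances is reused verbatim in the proof of Lemma~\ref{pcdecomp}, so if you adopt your proof here you will need to supply that overlap bound separately when you reach the standard decomposition.
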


\begin{proof}
Suppose that we have two intersecting instances $v_1,\ldots,v_k$ and $w_1,\ldots,w_k$ of $\pi$, where $v_i,w_i$ are labeled vertices and $v_1,\ldots,v_k$ and $w_1,\ldots,w_k$ are paths. Suppose that $v_i=w_j$ is the last vertex that the two instances share in common, i.e. the largest $i,j$ with $v_i=w_j$ so that $v_{i+1}\ne w_{j+1}$, if both $v_{i+1}$ and $w_{j+1}$ exist. Without loss of generality, suppose that $i\ge j$, which means that $w_1\in\{v_1,\ldots,v_k\}$. We assume that $i>j$, or equivalently that $v_1\ne w_1$. In this case, we must have that $j$ is at most the height $h$ of $\pi(i)$ in $\pi$. Indeed, note that we must have $j\le s$ because the first $s+1$ terms of $\pi$ are increasing. If $j>s$, then $w_1,\ldots,w_{s+1}$ would be a consecutive subsequence of $v_2,\ldots,v_k$ with increasing labels, so some $v_q$ has height at least $s+1$, a contradiction. Since the first $s$ terms of $\pi$ are increasing, $w_1,\ldots,w_j$ have increasing labels, so since $v_i=w_j$, $j$ is upper bounded by the height of $\pi(i)$ in $\pi$. Since $j\le s$ and the first $s$ terms of $\pi$ are $1,\ldots,s$, this means that the labels of $w_{j+1},\ldots,w_k$ are greater than the labels of $w_1,\ldots,w_j$, which are greater than the labels of $v_1,\ldots,v_{\min(s,i-1)}$ (which are in turn smaller than the labels of $v_{\min(s+1,i)},\ldots,v_k$). Thus, the smallest labels among the two instances are the labels of $v_1,\ldots,v_{\min(s,i-1)}$. In particular, this means that if we instead took an instance $v_p,\ldots,v_k$ of $\pi(p)\cdots\pi(k)$ for $p\le s$ and an instance $w_1,\ldots,w_k$ of $\pi$ such that $w_1\in\{v_p,\ldots,v_k\}$, then $v_p$ would have the smallest label among all vertices. Also, if we instead took an instance $v_{p_1},\ldots,v_k$ of $\pi(p_1)\cdots\pi(k)$ and an instance $w_{p_2},\ldots,w_k$ of $\pi(p_2)\cdots\pi(k)$ with $p_1,p_2\le s$ with $v_{p_1}=w_{p_2}$, then $v_{p_1}=w_{p_2}$ would have the smallest label among all vertices. In other words, whenever two possibly truncated (by no more than $s$ elements in the beginning) instances of $\pi$ intersect, where truncated instances must begin at the root of the tree formed by the two overlapping paths, the root will have the smallest label. This automatically implies that in any $p$-pseudo cluster with $p\le s$, the root has the smallest label.
\end{proof}

\begin{lem}
\label{pcdecomp}
Let $\pi$ be a grounded pattern with streak $s$, $C$ be a $p$-pseudo $m$-cluster with respect to $\pi$, and $r$ be the root vertex of $C$. Then the standard decomposition $(P,\mathcal Q)$ of $C$ exists and is unique. Furthermore, every vertex of $C$ lies in $P$ or a pseudo cluster in $\mathcal Q$, the pseudo clusters in $\mathcal Q$ are disjoint, and the only vertex that lies in both $P$ and $C_v$ is $v$ for all $v\in P\setminus\{r\}$. Every highlighted instance of a pattern in $C$ corresponds to a highlighted instance of a pattern in exactly one of $P$ and the pseudo clusters in $\mathcal Q$, and corresponding patterns end on the same vertex. The total number of highlighted instances of patterns throughout $P$ and the pseudo clusters in $\mathcal Q$ is thus $m$. Finally, in $P$ and the pseudo clusters in $\mathcal Q$, the root of the pseudo cluster has the smallest label.
\end{lem}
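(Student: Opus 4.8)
The plan is to construct the standard decomposition directly and then verify each claimed property, using Lemma \ref{rootsmall} as the crucial input. First I would \emph{define} $P$ to be the set of vertices that lie on some highlighted instance of a pattern passing through the root $r$, together with all those highlighted instances; one must check that this is itself a primitive $p$-pseudo cluster. The key point here is connectivity: since $C$ is a cluster, any two highlighted instances are linked by a chain of pairwise-intersecting highlighted instances, and I would argue that whenever a highlighted instance $I$ (of some truncated pattern, hence beginning at its topmost vertex) intersects an instance $I'$ through $r$, then $I$ also passes through $r$. This follows from the analysis in the proof of Lemma \ref{rootsmall}: two overlapping (possibly truncated, by at most $s$) instances meet along an initial segment of the lower one whose length is bounded by the height of the meeting vertex in the upper one, which is at most the streak $s$; combined with the groundedness of $\pi$, the shared portion is forced to contain the common root of the subtree spanned by the two paths, which in this configuration is $r$. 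Thus $P$ is exactly the connected ``core'' of highlighted instances through $r$, and by construction all its highlighted instances contain $r$, so $P$ is a primitive $p$-pseudo cluster.

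Next I would define, for each non-root vertex $v$ of $P$, the subtree $C_v$ to consist of all vertices of $C$ whose lowest ancestor in $P$ is $v$ (so $v\in C_v$), equipped with the highlighted instances of $C$ that lie entirely below $v$ in this sense but truncated at $v$. I would check that $C_v$ is an $h$-pseudo cluster where $h$ is the height of $v$ in $P$: every highlighted instance of $C$ not in $P$ must, by the core argument above, be disjoint from $r$ and hence live below some unique $v\in P$; when such an instance passes through $v$ it can have at most $h$ of its vertices among the ancestors of (and including) $v$ inside its own path, because those ancestors form an increasing consecutive run ending at $v$ and $v$ has height $h$ in $P$. Cutting those off leaves a highlighted instance of a truncation $\pi(p'+1)\cdots\pi(k)$ with $p'\le h\le s$, so $C_v$ is a (primitive, after grouping by the topmost vertex $v$) $h$-pseudo cluster. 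Disjointness of the $C_v$ and the fact that $C_v\cap C_w=\emptyset$ for distinct $v,w\in P\setminus\{r\}$ while $C_v\cap P=\{v\}$ are then immediate from the ``lowest ancestor in $P$'' definition, and every vertex of $C$ lands in $P$ or in exactly one $C_v$ by the dichotomy (a vertex either lies on an instance through $r$, or its lowest $P$-ancestor is some $v$). The bijection between highlighted instances of $C$ and highlighted instances of $P$ and the $C_v$, with matching terminal vertices, follows from the same case split, and summing the counts recovers $m$. Uniqueness holds because $P$ is forced: it must contain all highlighted instances through $r$ and, being a primitive $p$-pseudo cluster, cannot contain any other vertex. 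Finally, the statement that the root of each piece has the smallest label is exactly Lemma \ref{rootsmall} applied to the pseudo cluster $P$ (with parameter $p\le s$) and to each $C_v$ (with parameter $h\le s$).

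I expect the main obstacle to be the connectivity/core argument: showing rigorously that the highlighted instances through $r$ already form a connected piece, i.e.\ that no highlighted instance ``straddles'' the boundary by intersecting both a through-$r$ instance and something strictly below. This requires carefully re-running the overlap analysis of Lemma \ref{rootsmall} in the pseudo-cluster setting, keeping track of truncations and of which vertex plays the role of the common subtree-root, and invoking groundedness (max height $\le$ streak) at the right moment to bound the length of the shared segment. Everything after that — disjointness, the instance bijection, the count $m$, uniqueness — is bookkeeping that follows formally from the ``lowest ancestor in $P$'' definition, and the smallest-label claims are a direct citation of Lemma \ref{rootsmall}.
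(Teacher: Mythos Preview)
Your overall approach matches the paper's, but there is a genuine confusion in the first paragraph. The ``core argument'' claim---that any highlighted instance intersecting a through-$r$ instance must itself pass through $r$---is false. Full instances of $\pi$ can and do straddle: an instance can begin strictly below $r$ at a vertex of $P$ and then leave $P$ (see Figure~\ref{standecfig}, where the instance $2,4,6,5$ meets $P$ at the vertex labeled $2$ but does not contain the root labeled $1$). If your claim were true there would be nothing to truncate. Relatedly, your self-identified ``main obstacle'' is misidentified: the connectivity of $P$ is trivial, since every highlighted instance in $P$ contains $r$ by definition, and the parenthetical case of truncated instances is handled by Definition~\ref{pseudodef} itself, not by any overlap analysis.

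You do handle straddling correctly in your second paragraph, truncating such an instance $u_1,\ldots,u_k$ at its last $P$-vertex $u_j=v$ and asserting $j\le h$. But the justification there is circular: you write that $u_1,\ldots,u_j$ ``form an increasing consecutive run,'' which is precisely what must be established. The missing step is to first deduce $j\le s$ from the overlap analysis of Lemma~\ref{rootsmall} (the through-$r$ instance containing $u_j$ shares the segment $u_1,\ldots,u_j$ with the $u$-instance, and that lemma's argument bounds the shared length by $s$); only then does $\pi(1)<\cdots<\pi(j)$ force the labels of $u_1,\ldots,u_j$ to increase, after which $j\le h$ follows from the definition of height. This is exactly how the paper proceeds, invoking Lemma~\ref{rootsmall}'s bound at this point. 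Once you reroute the overlap analysis from the (non-)issue of $P$'s connectivity to this bound, the rest of your outline---disjointness, the instance bijection, the count $m$, uniqueness, and the smallest-label claim via Lemma~\ref{rootsmall}---is correct and coincides with the paper's argument.
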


\begin{proof}
Note that $P$ is uniquely determined by its definition, as it is the cluster consisting of the highlighted instances of patterns that contain the root of $C$. For each non-root vertex $v$ of $P$, we consider the tree $T_v$ of elements whose lowest ancestor that lies in $P$ is $v$. In this way, all of the $T_v$ are disjoint, all vertices of $C$ lie in one of $P$ and $T_v$ for $v\notin P$, and the only vertex that lies in both $P$ and $T_v$ is $v$. The $T_v$ will serve as the underlying trees of the pseudo clusters $C_v$.

We now highlight the instances of patterns in $P$ and the $T_v$. Every highlighted instance of a pattern in $C$ is contained entirely in one of $P$ and the $T_v$ or contains vertices in both $P$ and $T_v$ for some $v$. If a highlighted instance of a pattern in $C$ is contained entirely in one of $P$ and the $T_v$, then we highlight it in the corresponding tree, whichever one of $P$ and the $T_v$ it is. Note that this is consistent with the way we defined $P$ to be the pseudo cluster of highlighted instances of patterns containing the root of $C$, as no highlighted instance of a pattern that does not contain the root of $C$ can be entirely contained in $P$. It is also clear that $P$ is a primitive $p$-pseudo cluster. For every other highlighted instance $u_1,\ldots,u_k$ of a pattern, which is necessarily a highlighted instance of $\pi$ since it does not contain the root of $C$, we consider the maximal index $j$ such that $u_j\in P$. By the same arguments as in the previous paragraph, we must have that $j$ is at most the height $h$ of $u_j$ in $P$. We then highlight the instance $u_j,\ldots,u_k$ of $\pi(j)\cdots\pi(k)$ in $T_{u_j}$. As $j\le h$, this does not violate the condition that $C_{u_j}$ is to be an $h$-pseudo cluster. Furthermore, it is also clear that the highlighted instances of patterns in $T_v$ are all connected, which means that they form a cluster $C_v$. In this way, all highlighted instances of patterns in $C$ have a corresponding highlighted instance of a pattern in one of $P$ and the pseudo clusters in $\mathcal Q$, and corresponding instances end on the same vertex. By Lemma \ref{rootsmall}, in all of $P$ and the pseudo clusters in $\mathcal Q$, the root has the smallest label, so all of the claimed conditions have been proven.

Finally, note that given any standard decomposition $(P,\mathcal Q)$, it is easy to recover the original pseudo cluster $C$. The underlying tree of $C$ is simply the union of the underlying trees of $P$ and the pseudo clusters in $\mathcal Q$. We keep any highlighted instances of patterns containing the root or highlighted instances of $\pi$. All other highlighted instances of patterns are instances $v_j,\ldots,v_k$ of $\pi(j)\cdots\pi(k)$ that begin at a vertex $v_j$ of $P$. In this case, we have by construction that $v_j$ is the only vertex in the instance that is in $P$ and that $j$ is at most the height $h$ of $v_j$ in $P$, as $C_{v_j}$ is an $h$-pseudo cluster. Thus, if $u_1,\ldots,u_h=v_j$ is the path in $P$ of increasing labels, we can highlight $u_{h-j+1},\ldots,u_h=v_j,\ldots,v_k$ as the corresponding highlighted instance of $\pi$ in $C$. Doing so for all highlighted instances of patterns in $P$ and the $T_v$ recovers the original pseudo cluster $C$. Consequently, $p$-pseudo $m$-clusters $C$ are in bijection with standard decompositions $(P,\mathcal Q)$.
\end{proof}

\begin{lem}
\label{ptostrong}
Suppose that $\pi$ and $\pi'$ are grounded patterns of length $k$ of the same streak $s$. If $\pi$ and $\pi'$ are $p$-equivalent for all $1\le p\le s$, then the number of $p$-pseudo $m$-clusters of size $n$ with respect to $\pi$ and $\pi'$ are equal for all $m,n,1\le p\le s$. In particular, by taking $p=1$ we have that $\pi$ and $\pi'$ have the same forest cluster numbers and are thus strongly equivalent by Theorem \ref{gpfclust}.
\end{lem}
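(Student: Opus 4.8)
The plan is to prove, by induction on the size $n$ and uniformly over $1\le p\le s$, the stronger claim that $\pi$ and $\pi'$ have equally many $p$-pseudo $m$-clusters of size $n$ for every $m$; the statement about forest cluster numbers then follows by taking $p=1$, since $1$-pseudo clusters are ordinary clusters, and Theorem~\ref{gpfclust} converts equal cluster numbers into strong equivalence. The recursive engine is the standard decomposition of Lemma~\ref{pcdecomp}, which identifies a $p$-pseudo $m$-cluster $C$ of size $n$ bijectively with a pair $(P,\mathcal Q)$ consisting of a primitive $p$-pseudo cluster $P$ and, for each non-root vertex $v$ of $P$, an $h_v$-pseudo cluster $C_v$ rooted at $v$, where $h_v$ is the height of $v$ in $P$. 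Every $C_v$ has size $n_v\le n-1$: since $P$ contains the root of $C$ together with at least one further vertex (being a nonempty cluster all of whose highlighted paths, of length at least $2$, pass through the root), we have $n=|P|+\sum_v(n_v-1)\ge 2+(n_v-1)$, so the pieces are strictly smaller and the induction is well founded, the cases $n\le 1$ being vacuous.

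For the recursive step I would normalize the data: replace $P$ by the primitive $p$-pseudo cluster $\overline P$ on $[t]$, $t=|P|$, obtained by an order-preserving relabeling, and likewise replace each $C_v$ by an $h_v$-pseudo cluster $\overline C_v$ on $[n_v]$. Writing $g_\pi(n,m,p)$ for the number of $p$-pseudo $m$-clusters of size $n$ with respect to $\pi$, reconstructing $C$ from $\overline P$, the family $(\overline C_v)$, and a way of distributing the labels $[n]$ yields a recursion expressing $g_\pi(n,m,p)$ as a sum, over the discrete data $\big(t,m_P,\ell,f,(n_i),(m_i)\big)$ with $f\colon[t]\setminus\{\ell\}\to[k]$ a height function and $\sum_i(n_i-1)=n-t$, $m_P+\sum_i m_i=m$, of a product of three factors: (i) the number of primitive $p$-pseudo $m_P$-clusters of size $t$ with root label $\ell$ and height function $f$; (ii) the product over non-root labels $i$ of $[t]$ of $g_\pi\big(n_i,m_i,f(i)\big)$, where the factor for $i$ counts the piece attached at the vertex labeled $i$ (a trivial factor when $n_i=1$); and (iii) a combinatorial ``gluing coefficient'' counting the label distributions. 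Here Lemmas~\ref{rootsmall} and~\ref{pcdecomp} ensure that the root of each piece carries the smallest label of that piece, so that when $C$ is reassembled the only order relation linking different parts is that every interior label of the piece attached at $v$ exceeds the label of $v$ --- and, crucially, because $\pi$ is grounded with streak $s$, any highlighted path straddling $P$ and a piece is split at a vertex of $P$ whose index is at most $s$, so the portion lying in $P$ consists only of the smallest values and imposes no further order relation across the cut.

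The step I expect to require the most care is establishing that the gluing coefficient in (iii) is \emph{independent of the pattern}: it is a multinomial-type quantity determined solely by $t$, $\ell$, the sizes $(n_i)$, and the internal orders of the normalized pieces, since the constraints governing the interleaving of labels are precisely the internal orders of $\overline P$ and of the $\overline C_v$ together with the structural relation ``the interior of the piece at $v$ lies above the label of $v$'', none of which references whether the highlighted paths realize $\pi$ or $\pi'$. I would justify this by reassembling $C$ from $\overline P$ one piece at a time, inserting the $n_i-1$ interior labels of $\overline C_{v_i}$ into the running label order at positions above the current position of $v_i$, which exhibits the count as depending only on the data listed; one could also phrase the entire recursion as a substitution of labeled species. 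Granting this, factor~(i) is the same for $\pi$ and $\pi'$ by the hypothesis of $p$-equivalence, factor~(ii) is the same by the inductive hypothesis (each $n_i<n$, and each $f(i)\le s$ since heights in a cluster for a grounded pattern of streak $s$ never exceed the max height $s$), and factor~(iii) is the same expression; hence $g_\pi(n,m,p)=g_{\pi'}(n,m,p)$, which closes the induction. Taking $p=1$ gives equal forest cluster numbers for all $m$ and $n$, and Theorem~\ref{gpfclust} then yields $\pi\stackrel{sc}{\sim}\pi'$. As the preceding discussion notes, this recursion can in fact be upgraded to an explicit bijection between the two families of clusters, but the counting argument already suffices.
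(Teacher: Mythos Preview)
Your proposal is correct and follows essentially the same approach as the paper: both induct on $n$, invoke the standard decomposition $(P,\mathcal Q)$ of Lemma~\ref{pcdecomp}, use $p$-equivalence to match the primitive part and the inductive hypothesis to match the attached pieces (noting that heights in a grounded pattern are bounded by $s$, so the recursive calls stay in range), and observe that the remaining combinatorics of label assignment are pattern-independent. The only cosmetic difference is that the paper sums directly over the actual label sets $(L_P,(L_1,\dots,L_q))$ with the constraint $\min L_i=\ell_i$, so your ``gluing coefficient'' is absorbed into that sum rather than isolated as a separate factor; your normalization-then-glue formulation is an equivalent repackaging of the same count.
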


\begin{proof}
Let $r_{m,n,p}$ be the number of $p$-pseudo $m$-clusters of $n$ vertices with respect to $\pi$, and let $r_{m,n,p}'$ denote the same with respect to $\pi'$. Our approach, mimicking the proof of Theorem \ref{gpfclust} that was given in \cite{GP}, is to show that $r_{m,n,p}$ and $r_{m,n,p}'$ satisfy the same recursion, or equivalently by induction on $n$. The base cases of $n=0$ and $n=1$ are clear, as for $n=0$ there is only the empty pseudo cluster and for $n=1$ we have that $r_{m,1,p}=r_{m,1,p}'=0$. For the recursion or the inductive step, we make use of the standard decomposition.

For $m,n\ge0,1\le p\le s,f:\{2,\ldots,n\}\rightarrow[s]$, let $B(m,n,p,f)$ denote the number of primitive $p$-pseudo $m$-clusters $C$ of size $n$ such that for all $1<i\le n$, the height of the vertex with label $i$ in $C$ is $f(i)$. Note here that we have shown before that such a pseudo cluster must have the smallest label at its root and that the height of every non-root vertex is at most $s$, so it suffices to take $f$ to be a function from $\{2,\ldots,n\}$ to $[s]$, as otherwise $B(m,n,p,f)=0$. We define $B'(m,n,p,f)$ analogously for $\pi'$ and note that $B(m,n,p,f)=B'(m,n,p,f)$ by assumption of the $p$-equivalence of $\pi$ and $\pi'$.

Suppose that $L_P$ is a subset of $\{2,\ldots,n\}$ with $|L_P|=q$ and $L_P=\{\ell_1,\ldots,\ell_q\}$ with $\ell_1<\cdots<\ell_q$. Let $L_1\sqcup\cdots\sqcup L_q$ be a partition of $\{2,\ldots,n\}$ such that $L_i\subseteq\{\ell_i,\ldots,n\}$ and $L_P\cap L_i=\{\ell_i\}$ for all $1\le i\le q$. For simplicity, let $\mathcal L_n$ be the set of all pairs $(L_P,(L_1,\ldots,L_q))$ satisfying these conditions. Given integers $m,n>0$, $1\le p\le s$, $(L_P,(L_1,\ldots,L_q))\in\mathcal L_n$, nonnnegative integers $m_0,\ldots,m_q$ summing to $m$, and a function $f:L_P\rightarrow[s]$, let $N(m,n,p,L_P,(L_1,\ldots,L_q),(m_0,\ldots,m_q),f)$ denote the number of $p$-pseudo $m$-clusters $C$ of size $n$ with respect to $\pi$ such that if the standard decomposition of $C$ is $(P,\mathcal Q)$, then \begin{itemize}
    \item $P$ has label set $L_P\cup\{1\}$ (the label $1$ is automatically the root, as all pseudo clusters have root label $1$), the height of the vertex labeled with $\ell_i$ in $P$ is $f(\ell_i)$, and there are exactly $m_0$ highlighted instances of a pattern completely in $P$, and
    \item a pseudo cluster $C_v$ in $\mathcal Q$ with root $v$ has label set $L_i$ if $v$ has label $\ell_i$ and there are exactly $m_i$ highlighted instances of a pattern in $C_v$,
\end{itemize}
and let $N'(m,n,p,L_P,(L_1,\ldots,L_q),(m_0,\ldots,m_q),f)$ denote the analogous quantity for $\pi'$.

By summing over all possible $L_p,(L_1,\ldots,L_q),(m_0,\ldots,m_q),f$, we have that \[r_{m,n,p}=\sum_{(L_P,(L_1,\ldots,L_q))\in\mathcal L_n}\sum_{m_0+\cdots+m_q=m}\sum_{f:L_P\rightarrow[s]}N(m,n,p,L_P,(L_1,\ldots,L_q),(m_0,\ldots,m_q),f).\]
On the other hand, we have that \[N(m,n,p,L_P,(L_1,\ldots,L_q),(m_0,\ldots,m_q),f)=B(m_0,q+1,p,f\circ g_{L_P})\prod_{|L_i|>1\text{ or }m_i>0}r_{m_i,|L_i|,f(i)}\] where $g_{L_P}:\{2,\ldots,q+1\}\rightarrow L_P$ is the order-preserving bijection between $\{2,\ldots,q+1\}$ and $L_P$. Indeed, by Lemma \ref{pcdecomp}, pseudo clusters $C$ satisfying the conditions in the bullets above are in bijection with the standard decompositions $(P,\mathcal Q)$ that satisfy the conditions. We may then independently decide the pseudo cluster $P$ and the pseudo clusters in $\mathcal Q$ and multiply the results together. For $P$, we want to count the number of primitive $p$-pseudo $m_0$-clusters of size $q+1$ with height function $f\circ g_{L_P}$ as only the relative order matters for counting pseudo clusters. For $C_v\in\mathcal Q$ with root $v$, when $|L_i|=1$ and $m_i=0$, this corresponds to not attaching any extra highlighted instances of patterns to $v$, of which there is only one way. Otherwise, this is by definition counted by $r_{m_i,|L_i|,f(i)}$, demonstrating the identity.

Consequently we have that
\[r_{m,n,p}=\sum_{(L_P,(L_1,\ldots,L_q))\in\mathcal L_n}\sum_{m_0+\cdots+m_q=m}\sum_{f:L_P\rightarrow[s]}B(m_0,q+1,p,f\circ g_{L_P})\prod_{|L_i|>1\text{ or }m_i>0}r_{m_i,|L_i|,f(i)}\]
as well as
\[r_{m,n,p}'=\sum_{(L_P,(L_1,\ldots,L_q))\in\mathcal L_n}\sum_{m_0+\cdots+m_q=m}\sum_{f:L_P\rightarrow[s]}B'(m_0,q+1,p,f\circ g_{L_P})\prod_{|L_i|>1\text{ or }m_i>0}r_{m_i,|L_i|,f(i)}'\]
by applying the same arguments to $\pi'$. We have assumed that $\pi$ and $\pi'$ are $p$-equivalent. Thus, $B(m_0,q+1,p,f\circ g_{L_P})=B'(m_0,q+1,p,f\circ g_{L_P})$ for all $m_0,q,1\le p\le s,f\circ g_{L_P}:\{2,\ldots,q+1\}\rightarrow[s]$, so $r_{m,n,p}$ and $r'_{m,n,p}$ satisfy the same recursion and the lemma follows.
\end{proof}

The next step is show a way to ``boost'' $1$-equivalence between certain (not necessarily grounded) patterns to $p$-equivalence between longer patterns. It allows us to more easily find examples of the stronger condition of $p$-equivalence.

\begin{lem}
\label{boost}
Suppose that $\pi$ and $\pi'$ are $1$-equivalent permutations of length $k$ with the property that $\pi(1)>\pi(2)$ and $\pi'(1)>\pi'(2)$. Then $\pi$ and $\pi'$ have the same max height $h$, and for all $\ell$, $\sigma=1,\ldots,\ell,\pi(1)+\ell,\ldots,\pi(k)+\ell$ and $\sigma'=1,\ldots,\ell,\pi'(1)+\ell,\ldots,\pi'(k)+\ell$ are $p$-equivalent for all $1\le p\le\ell$. By Lemma \ref{ptostrong}, if $\ell\ge h$, then $\sigma$ and $\sigma'$ are strongly equivalent.
\end{lem}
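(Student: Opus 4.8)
The plan is to set up a bijection between primitive $p$-pseudo $m$-clusters for $\sigma$ with a prescribed height function and primitive $p$-pseudo $m$-clusters for $\sigma'$ with the same data, for every $1 \le p \le \ell$, and then invoke Lemma \ref{ptostrong}. First I would record the structural observations that make the prepending of $1,\ldots,\ell$ harmless. Since $\pi(1) > \pi(2)$, the pattern $\pi$ has streak $0$, so $\sigma = 1,\ldots,\ell,\pi(1)+\ell,\ldots,\pi(k)+\ell$ has streak exactly $\ell$ (the block $\pi(1)+\ell,\ldots$ starts with a descent relative to $\ell+1$ would need checking, but $\pi(1)+\ell \ge \ell+1$ always, and $\pi(2)+\ell < \pi(1)+\ell$ kills any longer streak); likewise for $\sigma'$. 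Moreover the max height of $\sigma$ equals the max height of $\pi$ (the only new increasing runs are inside the prepended block $1,\ldots,\ell$, which are disallowed from counting toward height by Definition \ref{heightdef} since they would include $\sigma(1)$, and runs crossing from the prepended block into the $\pi$-part are broken by $\pi(1) > \pi(2)$ after at most reaching $\pi(1)+\ell$ from $\ell$). So the first claim — that $\pi$ and $\pi'$ share a max height $h$ and that $\sigma,\sigma'$ are grounded once $\ell \ge h$ — follows from Definition \ref{groundef} and the fact that $1$-equivalent patterns have the same cluster numbers hence (by an easy extremal argument on the shape of a primitive cluster, whose leaves sit at depth $k$ and whose height profile is forced by $\pi$) the same max height.

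The heart of the argument is the $p$-equivalence of $\sigma$ and $\sigma'$. Fix $1 \le p \le \ell$ and consider a primitive $p$-pseudo $m$-cluster $C$ for $\sigma$. I would argue that $C$ decomposes canonically as follows: by Lemma \ref{rootsmall} the root $r$ carries the smallest label, and because the first $\ell$ entries of $\sigma$ are $1,\ldots,\ell$, the first $\ell$ vertices of every highlighted (possibly truncated, by at most $p \le \ell$) instance form an increasing chain of small labels hanging off $r$; when two instances overlap, Lemma \ref{rootsmall}'s proof shows the overlap happens exactly within these initial chains. Consequently $C$ is obtained from a \emph{core} primitive $p$-pseudo cluster for $\pi$ — namely the configuration of the ``$\pi$-parts'' $\sigma(\ell+1)\cdots\sigma(k) = \pi(1)+\ell,\ldots,\pi(k)+\ell$ of all instances, glued at their roots exactly as a primitive $1$-pseudo (i.e.\ ordinary) cluster for $\pi$ would be, after subtracting $\ell$ — together with the combinatorial data of the prepended chains. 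The prepended data is insensitive to whether we use $\pi$ or $\pi'$: it consists of a rooted tree of ``small'' vertices attached above the core, with its own height/label constraints that depend only on $\ell$, $p$, and the identity-prefix $1,\ldots,\ell$. Thus the map that replaces the core $\pi$-cluster by the corresponding $\pi'$-cluster under the hypothesized $1$-equivalence (relabeling consistently so that the height function and the set of labels are preserved — here one uses that $1$-equivalence is stated with respect to the height function and label-preserving data, Definition \ref{pequivdef} with $p=1$) gives a bijection witnessing $p$-equivalence of $\sigma$ and $\sigma'$. One checks that heights of vertices are matched: vertices in the prepended chains have heights $1,\ldots,\ell$ determined combinatorially and untouched, while a vertex in the $\pi$-part at position $i$ has height equal to the height of $\pi(i)$ in $\pi$ (equivalently in $\sigma$), which is preserved by the $1$-equivalence bijection on cores.

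Having established $p$-equivalence of $\sigma$ and $\sigma'$ for all $1 \le p \le \ell$, when $\ell \ge h$ both $\sigma$ and $\sigma'$ are grounded of streak $\ell$ and Lemma \ref{ptostrong} immediately gives that they are strongly equivalent. I expect the main obstacle to be the bookkeeping in the decomposition of $C$ into core-plus-chains: one must verify that the gluing of the $\pi$-parts to each other is governed by precisely the same combinatorics as an ordinary ($1$-pseudo) cluster for $\pi$ — in particular that no highlighted $\sigma$-instance can attach its $\pi$-part below another instance's $\pi$-part in a way not seen in $\pi$-clusters — and that the truncation parameter $p \le \ell$ interacts correctly, namely that a $\sigma$-instance truncated by $j \le p$ entries corresponds, after stripping the prepended chain, to a $\pi$-instance truncated by $\max(0, j-\ell) = 0$ entries, so the core is genuinely an ordinary cluster for $\pi$ and the $1$-equivalence hypothesis suffices. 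This is exactly where the condition $\pi(1) > \pi(2)$ is used: it guarantees the prepended identity block cannot be ``extended'' into the $\pi$-part, so the streak of $\sigma$ is exactly $\ell$ and the chain/core split is clean.
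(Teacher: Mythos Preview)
Your overall strategy matches the paper's: decompose a primitive $p$-pseudo cluster for $\sigma$ into a ``fixed'' piece coming from the prepended $1,\ldots,\ell$ and primitive $\pi$-clusters coming from the tails, then transport the latter through the $1$-equivalence bijection. The paper makes this precise via a coloring argument, calling the pieces the \emph{fixed tree} and the subtrees $T_v$ rooted at \emph{seeds} (vertices sitting at position $\ell+1$ in some instance); the $1$-equivalence is then applied to each $T_v$ separately.

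Your description of the decomposition is off in two places, and they matter. First, the claim that overlaps between instances occur ``exactly within these initial chains'' is false: two $\sigma$-instances can share their first $\ell+1$ vertices (same seed) and continue to overlap arbitrarily far into the $\pi$-part, so a single $T_v$ is a genuine primitive $\pi$-cluster with possibly many instances, not a bouquet of disjoint paths. Second, speaking of ``a core'' in the singular is misleading in the opposite direction: instances can branch \emph{before} position $\ell+1$, producing several distinct seeds and hence several disjoint $T_v$'s. There is no single $\pi$-cluster to hand to the $1$-equivalence bijection; you must apply it to each $T_v$ independently (and then check that heights in $C$ agree with heights in $T_v$ for non-seed vertices, which is exactly where $\pi(1)>\pi(2)$ is used). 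Your final paragraph shows you sense this is the crux, so the repair is a matter of stating the decomposition correctly rather than changing the plan.
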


\begin{proof}
To see that $\pi$ and $\pi'$ have the same max height, we consider clusters on $k$ vertices with $1$ instance of $\pi$ or $\pi'$. These clusters must consist of a path with labels in the order of $\pi$ or $\pi'$. The $1$-equivalence of $\pi$ and $\pi'$ then implies that $\pi$ and $\pi'$ start with the same number $a$ and that for all $i\ne a$, the height of $i$ in $\pi$ is equal to the height of $i$ in $\pi'$, so $\pi$ and $\pi'$ have the same max height.

The $1$-equivalence of $\pi$ and $\pi'$ is equivalent to the existence of a root- and height-preserving bijection between primitive $m$-clusters of size $n$ for $\pi$ and primitive $m$-clusters of size $n$ for $\pi'$. Here, root-preserving means that the label of the root is fixed and height-preserving means that for each label different from the label of the root, the height of the vertex having that label in the tree is fixed (though the vertex that has that label and even the underlying unlabeled forest structure may change). We specify that such a bijection should be both root-preserving and height-preserving because the height is technically only defined for non-root vertices. Let $\alpha$ be a bijection mapping primitive $m$-clusters $C$ for $\pi$ to primitive $m$-clusters $C'$ for $\pi'$. By assumption, $\alpha$ has an inverse $\beta$, so $\alpha$ and $\beta$ are inverse maps between primitive clusters on $\pi$ and $\pi'$.

To prove the lemma, we will demonstrate a root- and height-preserving bijection between primitive $p$-pseudo $m$-clusters of size $n$ for $\sigma$ and primitive $p$-pseudo $m$-clusters of size $n$ for $\sigma'$ for all $1\le p\le\ell$ based on $\alpha$ and $\beta$. Before we explain the bijection, we first make some observations about primitive pseudo clusters for $\sigma$ and $\sigma'$. First, in any primitive pseudo cluster it is unnecessary to highlight instances of patterns, because the highlighted instances are always going to consist of the paths from the root to the leaves. In this way, highlighted instances correspond directly to the leaves of the underlying tree. Consider a primitive $p$-pseudo cluster $C$ for $\sigma=\sigma(1)\cdots\sigma(k+\ell)$ with $1\le p\le\ell$. For each instance $v_i,\ldots,v_{k+\ell}$ of $\sigma(i)\cdots\sigma(k+\ell)$, color $v_j$ with the color $j$ for all $i\le j\le k+\ell$, so vertices may receive multiple colors. Note that all vertices colored with a number greater than $\ell+1$ are only colored by that number. Indeed, suppose that such a vertex $v$ was colored by both $a$ and $b$, where $a>b$ and $a>\ell+1$. Then note that the parent of $v$ is colored by both $a-1$ and $b-1$. By taking parents and going up the tree, we may assume that $a=\ell+2$. We never end up at the root during this process since $p\le\ell$, so the root is only colored with numbers at most $\ell$. Note that $\sigma(1)<\cdots<\sigma(\ell+1)>\sigma(\ell+2)$. Thus, if $v$ is colored with $\ell+2$ then its label is smaller than its parent's label. But if $v$ is colored with $b\le\ell+1$ then its label is greater than its parent's label, a contradiction.

Define the \emph{fixed tree} $T$ of $C$ as the subtree of vertices whose colors are all at most $\ell+1$. Let the \emph{seeds} of $T$ be the vertices that are labeled $\ell+1$. Note that the root cannot be a seed as $p\le\ell$. By considering the colors of the vertices, for each vertex $u$ not in $T$, the lowest ancestor of $u$ in $T$ is a seed. For each seed $v$, we let $T_v$ be the subtree of $C$ consisting of $v$ and the vertices outside of $T$ for which $v$ is the lowest ancestor in $T$, which necessarily contains vertices other than $v$. Then, $C$ is the union of $T$ and $T_v$ for seeds $v\in T$, where the $T_v$ are disjoint, and the only vertices in both $T$ and $T_v$ is $v$. Let $\mathcal U$ be the collection of $T_v$ over all seeds $v\in T$, and call the decomposition of $C$ into $(T,\mathcal U)$ the \emph{primitive decomposition} of $C$. Note that it is well-defined for both $\sigma$ and $\sigma'$, as $\sigma'$ also satisfies all of the properties that we used throughout this paragraph.

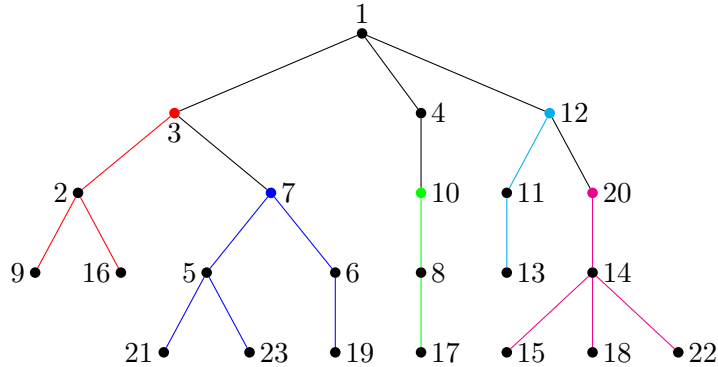
\begin{figure}[ht]
    \centering
    \scalebox{0.7}
    {\forestset{filled circle/.style={
      circle,
      text width=4pt,
      fill,
    },}
    \begin{forest}
    for tree={filled circle, inner sep = 0pt, outer sep = 0 pt, s sep = 1 cm}
    [, fill=white
        [, edge=white, edge label={node[above,black]{1}}
            [, fill=red, edge label={node[black,below]{3}}
                [, edge=red, edge label={node[black, above,left]{2}}
                    [, edge=red, edge label={node[black, above,left]{9}}
                    ]
                    [, edge=red, edge label={node[black, above,left]{16}}
                    ]
                ]
                [, fill=blue, edge label={node[black,right]{7}}
                    [, edge=blue, edge label={node[left,black]{5}}
                        [, edge=blue, edge label={node[left,black]{21}}
                        ]
                        [, edge=blue, edge label={node[right,black]{23}}
                        ]
                    ]
                    [, edge=blue, edge label={node[right,black]{6}}
                        [, edge=blue, edge label={node[right,black]{19}}
                        ]
                    ]
                ]
            ]
            [, edge label={node[right,black]{4}}
                [, fill=green, edge label={node[right,black]{10}}
                    [, edge=green, edge label={node[right,black]{8}}
                        [, edge=green, edge label={node[right,black]{17}}
                        ]
                    ]
                ]
            ]
            [, fill=cyan, edge label={node[right,black]{12}}
                [, edge=cyan, edge label={node[right,black]{11}}
                    [, edge=cyan, edge label={node[right,black]{13}}
                    ]
                ]
                [, fill=magenta, edge label={node[right,black]{20}}
                    [, edge=magenta, edge label={node[right,black]{14}}
                        [, edge=magenta, edge label={node[right,black]{15}}
                        ]
                        [, edge=magenta, edge label={node[right,black]{18}}
                        ]
                        [, edge=magenta, edge label={node[right,black]{22}}
                        ]
                    ]
                ]
            ]
        ]
    ]
    \end{forest}}
    \caption{The primitive decomposition of a pseudo cluster with respect to $12435$. The fixed tree consists of the vertices labeled with $1,3,4,7,10,12,20$. We have not highlighted pattern instances as they are just the paths from the root to the leaves. Instead, we have colored the primitive $435$-clusters and their seeds. The red cluster is rooted at $3$ and has instances on $\{3,2,9\}$ and $\{3,2,16\}$, the blue cluster is rooted at $7$ and has instances on $\{7,5,21\}$, $\{7,5,23\}$, and $\{7,6,19\}$, the green cluster is rooted at $10$ and has an instance on $\{10,8,17\}$, the cyan cluster is rooted at $12$ and has an instance on $\{12,11,13\}$, and the magenta cluster is rooted at $20$ and has instances on $\{20,14,15\}$, $\{20,14,18\}$, and $\{20,14,22\}$.}
    \label{seedfig}
\end{figure}

The last observation we need is that for a primitive $p$-pseudo cluster $C$ for $\sigma$ with $p\le\ell$ with primitive decomposition $(T,\mathcal U)$, all of the clusters $T_v$ are primitive clusters for $\pi$. This is clear from the definition of the primitive decomposition. Now, given a primitive $p$-pseudo $m$-cluster $C$ of size $n$ for $\sigma$, we consider its primitive decomposition $(T,\mathcal U)$. For each $T_v\in\mathcal U$, we apply the map $\alpha$ (using the relative order of the labels in $T_v$). The result is a primitive $p$-pseudo $m$-cluster $C'$ of size $n$ for $\sigma'$. This is as for each seed $v$, the labels of $v$'s strict ancestors are all less than the labels of $v$'s descendants. By replacing each root-to-leaf instance of $\pi$ in $T_v$ with a root-to-leaf instance of $\pi'$, we replace each root-to-leaf instance of $i,\ldots,\ell,\pi(1)+\ell,\ldots,\pi(k)+\ell$ in $C$ with a root-to-leaf instance of $i,\ldots,\ell,\pi'(1)+\ell,\ldots,\pi'(k)+\ell$. Since $\alpha$ fixes the root and the number of vertices and number of leaves by definition, the result is a $p$-pseudo $m$-cluster $C'$ of size $n$ for $\sigma'$. Furthermore, it is clear that $C'$ has the same fixed tree (including labels) and seeds as $C$. Thus, if $(T,\mathcal U')$ is the primitive decomposition of $C'$ where $\mathcal U'=\bigcup_{v\text{ seed}}T_v'$, then applying $\beta$ to each $T_v'$ transforms them back into $T_v$, giving the original cluster $C$. Thus, we have defined a bijection between pseudo clusters for $\sigma$ and pseudo clusters for $\sigma'$. It remains to show that this bijection fixes the root and height of each vertex. The root is clearly fixed since it is a part of the fixed tree. Vertices and labels in the fixed tree are fixed, so their heights are as well. To finish, we note that for a vertex $w\ne v$ in $T_v$ where $v$ is a seed of the fixed tree $T$ of $C$, the height of $w$ in $C$ is equal to the height of $w$ in $T_v$. This is as $\sigma(\ell+1)>\sigma(\ell+2)$ due to $\pi(1)>\pi(2)$, so all of the children of a seed $v$ in $T_v$ have smaller labels than $v$. Thus, as $\alpha$ and $\beta$ preserve the heights, so does the bijection we have defined. Finally, note that $\sigma$ and $\sigma'$ are grounded patterns of streak $\ell$, so by Lemma \ref{ptostrong} they are strongly equivalent.
\end{proof}

The next step is to obtain $1$-equivalences from primitive structure equivalences. The rough idea is that if $x,\ldots,y$ all have the same height in a pattern $\pi$, are not consecutive, and appear after $x-1$ and $y+1$, then we are able to permute the vertices in the layers of any primitive cluster corresponding to the positions of $x,\ldots,y$ in $\pi$ following primitive structure equivalence. This follows because the appearance of $x-1$ and $y+1$ before $x,\ldots,y$ ``pin down'' the values of those layers so that they are able to be freely permuted without affecting the relative order with the rest of the forest.

\begin{lem}
\label{swaplem}
Let $\pi$ be a permutation of length $k$, and consider indices $1<a_1<\ldots<a_i\le k$ with $a_{j+1}-a_j>1$ for all $1\le j<i$ \emph{(}i.e. the indices are not consecutive\emph{)} such that the heights of $\pi(a_j)$ in $\pi$ are equal for all $j$. Suppose that
\begin{itemize}
    \item $\{\pi(a_1),\ldots,\pi(a_i)\}=\{x,\ldots,y\}$ for some $x<y$ and that
    \item $x-1$ and $y+1$ \emph{(}if they are in $[n]$\emph{)} appear before $\pi(a_1)$ in $\pi$.
\end{itemize}
Let $\sigma$ and $\sigma'$ be permutations of length $i$ that are primitive structure equivalent. Suppose that $\sigma$ and $\pi(a_1),\ldots,\pi(a_i)$ are in the same relative order, and let $\pi'(a_1),\ldots,\pi'(a_i)$ be the permutation of $\pi(a_1),\ldots,\pi(a_i)$ that is in the same relative order as $\sigma'$. Then $\pi$ is $1$-equivalent to $\pi'$, where $\pi'$ is $\pi$ with the subsequence $\pi(a_1),\ldots,\pi(a_i)$ is replaced by $\pi'(a_1),\ldots,\pi'(a_i)$, i.e. define $\pi'(a_j)$ as before and $\pi'(j)=\pi(j)$ for $j\notin\{a_1,\ldots,a_i\}$.
\end{lem}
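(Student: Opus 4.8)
The plan is to work directly with the definition of $1$-equivalence (Definition~\ref{pequivdef}): fix $n$, a root label $\ell\in[n]$, and a function $f:[n]\setminus\{\ell\}\rightarrow[k]$, and show that for every $m$ the number of primitive $m$-clusters of size $n$ for $\pi$ with root labeled $\ell$ in which the vertex with label $j$ has height $f(j)$ for all $j$ equals the corresponding number for $\pi'$. Two facts set the stage. First, a primitive cluster for a length-$k$ pattern is just a labeled rooted tree all of whose leaves lie at depth $k$ and all of whose root-to-leaf paths are order-isomorphic to the pattern, and the height of a non-root vertex at depth $d$ in it equals the height of $\pi(d)$ in $\pi$, so it depends only on the depth. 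Call a depth \emph{special} if it is one of $a_1,\dots,a_i$, and a vertex special if its depth is; then every special vertex has one and the same height $h_0$, the common value of the heights of $\pi(a_1),\dots,\pi(a_i)$ in $\pi$. Second, writing $\beta,\gamma$ for the positions of $x-1$ and $y+1$ in $\pi$ (whenever those values lie in $[k]$), the hypotheses give $\beta,\gamma<a_1$, and $\{x,\dots,y\}$ is exactly the set of values of $\pi$ strictly between $x-1$ and $y+1$.

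Next I would set up a structural decomposition of an arbitrary primitive $\pi$-cluster $C$. Since $a_1>1$, every special vertex lies in the subtree $T_w$ rooted at a unique depth-$(a_1-1)$ vertex $w$, and distinct $T_w$ are disjoint. All root-to-leaf paths through $T_w$ share the prefix ending at $w$, hence share ancestors $p_w,q_w$ of $w$ (possibly $w$ itself) at positions $\beta,\gamma$, so $L(p_w)$ and $L(q_w)$ are the labels of ranks $x-1$ and $y+1$ along each such path. The crucial ``pinning'' observation is that, because $\{x,\dots,y\}$ is precisely the block of values strictly between $x-1$ and $y+1$ and these appear before position $a_1$, a vertex of $T_w$ is special if and only if its label lies in the open interval $(L(p_w),L(q_w))$, with the conventions $L(p_w)=0$ when $x=1$ and $L(q_w)=n+1$ when $y=k$. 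Contracting away the non-special depths, the special vertices of $T_w$ split into a disjoint union of rooted trees $\widetilde T_v$, one per depth-$a_1$ child $v$ of $w$, each with all leaves at level $i$; since the labels along a root-to-leaf path of $\widetilde T_v$ are in the same relative order as $\pi(a_1),\dots,\pi(a_i)$, that is, as $\sigma$, each $\widetilde T_v$ with its labels (read through their relative order) is a primitive $\sigma$-cluster.

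The payoff is a decoupling that I would phrase as a count. The underlying tree of $C$, the labels at non-special vertices, the location of the $\widetilde T_v$, and the intervals $(L(p_w),L(q_w))$ are all determined by the underlying tree and the non-special labels alone; given these, the set $R$ of remaining labels is forced to be split so that $R\cap(L(p_w),L(q_w))$ is distributed among the $\widetilde T_v$ for the children $v$ of $w$ ($|\widetilde T_v|$ labels to each, a forced multiplicity), after which each $\widetilde T_v$ may be completed by \emph{any} primitive $\sigma$-cluster labeling on its assigned label set, subject only to $f$ being identically $h_0$ on $R$. The same description governs $\pi'$: it agrees with $\pi$ off the special positions, has the same special positions, the same value block $\{x,\dots,y\}$ there, the same $\beta,\gamma$, and (by the verification below) the same height at every position, while the $\widetilde T_v$-constraint becomes a primitive $\sigma'$-cluster labeling. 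Since $\sigma$ and $\sigma'$ are primitive structure equivalent (Definition~\ref{psequivdef}), the number of labelings making a given rooted tree a primitive $\sigma$-cluster equals the number making it a primitive $\sigma'$-cluster, and this number depends only on the isomorphism type of the tree and the size of the label set. Summing the identical combinatorial data for the two patterns and multiplying the equal per-$\widetilde T_v$ factors gives equal counts, so $\pi$ is $1$-equivalent to $\pi'$. Concretely this is a bijection: keep the tree and the non-special labels, and replace each $\sigma$-cluster labeling of a $\widetilde T_v$ by a $\sigma'$-cluster labeling of the same tree on the same label set through a fixed bijection supplied by Definition~\ref{psequivdef}; the inverse is the analogous map for $\sigma'$ and $\sigma$, using that $\pi'$ satisfies the lemma's hypotheses with $\sigma$ and $\sigma'$ interchanged.

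Finally I would check that $\pi'$ is a permutation whose every position has the same height as in $\pi$; this is what makes the two count problems comparable and makes the rebuilt root-to-leaf paths order-isomorphic to $\pi'$. Permuting the value block $\{x,\dots,y\}$ among positions $a_1,\dots,a_i$ leaves $\pi'$ a permutation, and the height of a position $d$ is determined by the outcomes of the adjacent comparisons $\pi(c-1)$ versus $\pi(c)$ for $2\le c\le d$, so it is enough that each such comparison is unchanged. This is clear when neither $c-1$ nor $c$ is special, and since $a_1,\dots,a_i$ are non-consecutive they are never both special; if exactly one is special, then one of $\pi(c-1),\pi(c)$ remains in $\{x,\dots,y\}$ while the other is a fixed value outside $\{x,\dots,y\}$, so the comparison depends only on whether that fixed value is below $x$ or above $y$, and is unchanged. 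Hence all heights, $h_0$ included, are preserved. I expect the main obstacle to be the structural claim of the second paragraph --- that the special labels inside each $T_w$ are exactly those in $(L(p_w),L(q_w))$ and decouple into independent primitive $\sigma$-cluster data --- which needs care with the shared prefixes of root-to-leaf paths and with the boundary cases $\beta=a_1-1$, $x=1$, and $y=k$; after that the argument is bookkeeping plus an appeal to primitive structure equivalence.
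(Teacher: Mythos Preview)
Your approach is essentially the paper's: contract the special depths to obtain a primitive $\sigma$-cluster on each $\widetilde T_v$ and invoke primitive structure equivalence. Two places are looser than the paper's treatment, though neither is fatal.

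First, the claim that $R\cap(L(p_w),L(q_w))$ is wholly ``distributed among the $\widetilde T_v$ for the children $v$ of $w$'' is not quite right: the intervals for different $w$ may coincide or overlap (whenever two depth-$(a_1{-}1)$ vertices share their ancestors at depths $\beta$ and $\gamma$), so which $T_w$ a given special label lands in is itself a choice, not forced. This does not damage the comparison, because the number of $\sigma$-labelings of each $\widetilde T_v$ depends only on its shape, not on which labels it receives; hence the partition choice factors out identically for $\pi$ and $\pi'$.

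Second, Definition~\ref{psequivdef} is stated for \emph{ordered} primitive clusters, whereas your $\widetilde T_v$ sits inside an unordered $C$. The paper resolves this explicitly: when $a_i<k$ the non-special descendants distinguish siblings, so the contraction is canonically ordered and the structure-preserving bijection $\varphi$ applies directly; when $a_i=k$ the leaves of $\widetilde T_v$ are leaves of $C$ and may be genuinely indistinguishable, and the paper handles this with a separate overcount-by-$M$ argument. Your counting argument can bypass this case split --- the ordered labeling count depends only on the unordered shape, so equal ordered counts give equal unordered counts after dividing by the automorphism factor --- but the explicit ``bijection'' you sketch at the end inherits exactly this ambiguity when $a_i=k$, and you should either drop it or add the paper's fix.
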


\begin{proof}
We will be defining a root- and height-preserving bijection between primitive clusters for $\pi$ and $\pi'$. In fact, our bijection will be structure-preserving, so it will only permute certain non-root labels, specifically the labels of the vertices with depth in $\{a_1,\ldots,a_i\}$. Note that the conditions in the lemma statement are symmetric with respect to $\pi$ and $\pi'$, i.e. they are satisfied for $\pi$ if and only if they are satisfied for $\pi'$ (this holds for any permutation $\pi'$ that starts with $\pi$ and permutes some of $\pi(a_1),\ldots,\pi(a_i)$). This is because the elements of $\pi$ outside of $\pi(a_1),\ldots,\pi(a_i)$ are either greater than all of $\pi(a_1),\ldots,\pi(a_i)$ or less than all of $\pi(a_1),\ldots,\pi(a_i)$. None of the elements $\pi(a_1),\ldots,\pi(a_i)$ are adjacent, so permuting the $\pi(a_j)$ does not impact the equal heights condition. It in fact preserves the heights of all $\pi(j)$ in $\pi$. All other conditions are defined symmetrically so are always satisfied when $\pi(a_1),\ldots,\pi(a_i)$ are permuted.

Note that by the conditions in the statement, any structure-preserving bijection between primitive clusters for $\pi$ and $\pi'$ of the form described in the previous paragraph (where only labels of vertices with depth in $\{a_1,\ldots,a_i\}$ are permuted) will preserve the height of all vertices. Let $\varphi$ be the structure-preserving bijection from ordered primitive clusters for $\sigma$ to ordered primitive clusters for $\sigma'$. For each vertex $v$ of depth $a_1$ in a primitive cluster $C$ of $\pi$, we will permute labels within the subtree of $C$ rooted at $v$. The conditions in the lemma statement guarantee that if $S$ is the set of all of the descendants of $v$ that have depth in $\{a_1,\ldots,a_i\}$, then any descendant or ancestor $w$ of $v$ will either be in $S$, have label either greater than all of the labels of vertices in $S$, or have label less than all of the labels of vertices in $S$. Indeed, suppose that the label of $w\notin S$ is greater than the label of $v$. Consider the ancestor $u$ of $v$ of depth $j$, where $\pi(j)=y+1$, which exists by the conditions in the statement. We know that the label of $w$ is greater than the label of $u$. On the other hand, the label of $u$ is greater than all of the labels in $S$. Thus, the label of $w$ is greater than all of the labels in $S$. The case in which the label of $w$ is less than the label of $v$ is analogous. Thus, if the labels of the vertices in $S$ are permuted so that every instance of $\sigma$ (here we are looking at the vertices of depth $a_1,\ldots,a_i$ in a path starting at $v$) is replaced with an instance of $\sigma'$, then every instance of $\pi$ with the $a_1$st vertex at $v$ will be replaced with an instance of $\pi'$. If we do so for all $v$ of depth $a_1$, we will have replaced all instances of $\pi$ with an instance of $\pi'$. 

If $a_i<k$, then this replacement is precisely given by $\varphi$. Indeed, if we connect every vertex $u_1$ and $u_2$ in $S$ of depths $a_j$ and $a_{j+1}$ such that $u_1$ is an ancestor of $u_2$ for all $1\le j<i$ (i.e. the contraction of the tree to $S$), then we give $S$ the structure of an ordered primitive cluster for $\sigma$. This is because every path from a root to a leaf in this contraction is an instance of $\sigma$, but the children of each vertex are distinguished and their order matters. In the original graph, non-leaf vertices of $S$ have children that are not in $S$. Such children that are not in $S$ distinguish non-leaf vertices in from their siblings in $S$, so we indeed have an ordered primitive cluster for $\sigma$. Applying $\varphi$ and $\varphi^{-1}$ to each subtree rooted at depth $a_1$ gives us a desired bijection between primitive clusters for $\pi$ and primitive clusters for $\pi'$.

If $a_i=k$, then we need to modify the previous argument to account for the fact that the leaves of the contraction to $S$ do not have children in the original tree and may not have a distinguished order. The order of leaves $v_1,\ldots,v_m$ in $S$ does not matter if and only if $v_1,\ldots,v_m$ have a the same parent in the original tree. Thus, given an unlabeled tree structure, there is a fixed number $M$ that counts the number of ways to permute the labels of the leaves of a cluster that do not change the cluster. If we fix an unlabeled tree structure and enrich clusters for $\pi$ and $\pi'$ with one of the $M$ orders for their leaves, then the bijection from the previous paragraph directly applies. Thus, given a fixed unlabeled tree structure, the order-enriched clusters for $\pi$ and $\pi'$ are in bijection. This means that the number of clusters for $\pi$ with a given tree structure is equal to the number of clusters for $\pi'$ with a given tree structure because the fixed number $M$ of orders on the leaves is the same for $\pi$ and $\pi'$. Summing over all possible unlabeled tree structures gives the result.
\end{proof}

\begin{exmp}
\label{swapex}
Figure \ref{swapfig} gives an example of the bijection in the proof of Lemma \ref{swaplem}. In the pattern $1253764$, $3$ and $4$ have the same height of $1$, and $2$ and $5$ appear before them. Thus, we may swap them to get that $1253764$ and $1254763$ are $1$-equivalent, because the patterns $12$ and $21$ are primitive structure equivalent. A structure-preserving bijection between ordered primitive clusters for $12$ and $21$ is given by swapping the largest and smallest labels. In the primitive cluster for $1253764$, the vertices of depth $4$ and $7$, corresponding to the positions of $3$ and $4$ in the pattern, form ordered primitive clusters for $12$, which are colored red and blue. When the bijection between ordered primitive clusters for $12$ and $21$ is applied, we obtain a corresponding primitive cluster for $1254763$. Note that technically, the order of the vertices labeled $6$ and $7$ does not matter. If we enrich the cluster with an order between the vertices labeled $6$ and $7$, as well as an order between the vertices labeled $10$ and $11$, then each cluster for $1253764$ is counted $M=4$ times. Since the corresponding cluster for $1254763$ has the same structure, each cluster for $1253764$ of that structure is also counted $M=4$ times.
\end{exmp}

\begin{figure}[ht]
    \centering
    \scalebox{0.7}
    {\forestset{filled circle/.style={
      circle,
      text width=4pt,
      fill,
    },}
    \begin{minipage}[b]{0.45\linewidth}
    \centering
    \begin{forest}
    for tree={filled circle, inner sep = 0pt, outer sep = 0 pt, s sep = 1 cm}
    [, fill=white
        [, edge=white, edge label={node[above,black]{1}}
            [, edge label={node[left,black]{2}}
                [, edge label={node[left,black]{12}}
                    [, fill=red, edge label={node[right,black]{4}}, name=v4
                        [, edge label={node[left,black]{18}}
                            [, edge label={node[left,black]{14}}
                                [, fill=red, edge label={node[left,black]{6}}, name=v6
                                ]
                                [, fill=red, edge label={node[left,black]{7}}, name=v7
                                ]
                            ]
                        ]
                        [, edge label={node[left,black]{19}}
                            [, edge label={node[left,black]{15}}
                                [, fill=red, edge label={node[left,black]{8}}, name=v8
                                ]
                            ]
                        ]
                    ]
                ]
            ]
            [, edge label={node[right,black]{3}}
                [, edge label={node[right,black]{13}}
                    [, fill=blue, edge label={node[left,black]{5}}, name=v5
                        [, edge label={node[right,black]{20}}
                            [, edge label={node[right,black]{16}}
                                [, fill=blue, edge label={node[right,black]{9}}, name=v9
                                ]
                            ]
                        ]
                        [, edge label={node[right,black]{21}}
                            [, edge label={node[right,black]{17}}
                                [, fill=blue, edge label={node[right,black]{10}}, name=v10
                                ]
                                [, fill=blue, edge label={node[right,black]{11}}, name=v11
                                ]
                            ]
                        ]
                    ]
                ]
            ]
        ]
    ]
    \path[red,dashed,out=180,in=135] (v4.child anchor) edge (v6.parent anchor);
    \path[red,dashed,out=-90,in=135] (v4.child anchor) edge (v8.parent anchor);
    \path[red,dashed] (v4.child anchor) edge (v7.parent anchor);
    \path[blue,dashed,out=0,in=45] (v5.child anchor) edge (v11.parent anchor);
    \path[blue,dashed,out=-90,in=45] (v5.child anchor) edge (v9.parent anchor);
    \path[blue,dashed] (v5.child anchor) edge (v10.parent anchor);
    \end{forest}
    \end{minipage}
    \begin{minipage}[b]{0.45\linewidth}
    \centering
    \begin{forest}
    for tree={filled circle, inner sep = 0pt, outer sep = 0 pt, s sep = 1 cm}
    [, fill=white
        [, edge=white, edge label={node[above,black]{1}}
            [, edge label={node[left,black]{2}}
                [, edge label={node[left,black]{12}}
                    [, fill=red, edge label={node[right,black]{8}}, name=v4
                        [, edge label={node[left,black]{18}}
                            [, edge label={node[left,black]{14}}
                                [, fill=red, edge label={node[left,black]{6}}, name=v6
                                ]
                                [, fill=red, edge label={node[left,black]{7}}, name=v7
                                ]
                            ]
                        ]
                        [, edge label={node[left,black]{19}}
                            [, edge label={node[left,black]{15}}
                                [, fill=red, edge label={node[left,black]{4}}, name=v8
                                ]
                            ]
                        ]
                    ]
                ]
            ]
            [, edge label={node[right,black]{3}}
                [, edge label={node[right,black]{13}}
                    [, fill=blue, edge label={node[left,black]{11}}, name=v5
                        [, edge label={node[right,black]{20}}
                            [, edge label={node[right,black]{16}}
                                [, fill=blue, edge label={node[right,black]{9}}, name=v9
                                ]
                            ]
                        ]
                        [, edge label={node[right,black]{21}}
                            [, edge label={node[right,black]{17}}
                                [, fill=blue, edge label={node[right,black]{10}}, name=v10
                                ]
                                [, fill=blue, edge label={node[right,black]{5}}, name=v11
                                ]
                            ]
                        ]
                    ]
                ]
            ]
        ]
    ]
    \path[red,dashed,out=180,in=135] (v4.child anchor) edge (v6.parent anchor);
    \path[red,dashed,out=-90,in=135] (v4.child anchor) edge (v8.parent anchor);
    \path[red,dashed] (v4.child anchor) edge (v7.parent anchor);
    \path[blue,dashed,out=0,in=45] (v5.child anchor) edge (v11.parent anchor);
    \path[blue,dashed,out=-90,in=45] (v5.child anchor) edge (v9.parent anchor);
    \path[blue,dashed] (v5.child anchor) edge (v10.parent anchor);
    \end{forest}
    \end{minipage}}
    \caption{The patterns $1253764$ and $1254763$ are $1$-equivalent because the patterns $12$ and $21$ are primitive structure equivalent.}
    \label{swapfig}
\end{figure}
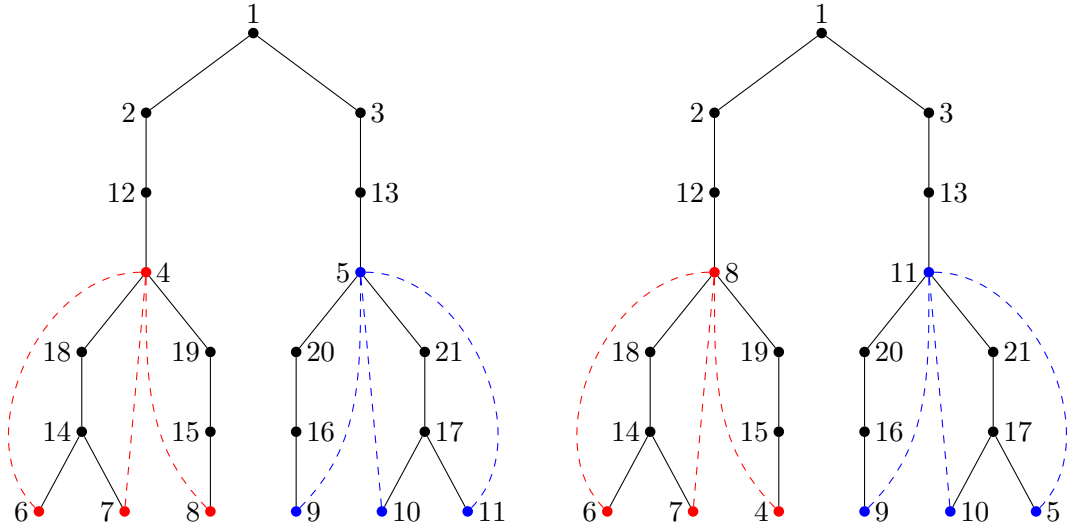

The final step in our construction is to find examples of primitive cluster equivalence. Complementation and appending $k+1$ give some primitive cluster equivalences which can be explicitly described, but there is also a family of recursively defined equivalences constructed in the same way as in Lemma \ref{swaplem}.

\begin{lem}
\label{pselem}
A permutation $\pi$ is primitive structure equivalent to its complement. If $\pi$ and $\pi'$ are permutations of length $k$ that are primitive structure equivalent, then $k+1,\pi$ and $k+1,\pi'$ are primitive structure equivalent. Finally, consider indices $1<a_1<\ldots,a_i\le k$ with the property that $\{\pi(a_1),\ldots,\pi(a_i)\}=\{x,\ldots,y\}$ for some $x<y$ and such that $x-1$ and $y+1$ \emph{(}if they are in $[n]$\emph{)} appear before $\pi(a_1)$ in $\pi$. Let $\sigma$ and $\sigma'$ be permutations of length $i$ that are primitive structure equivalent. Suppose that $\sigma$ and $\pi(a_1),\ldots,\pi(a_i)$ are in the same relative order, and let $\pi'(a_1),\ldots,\pi'(a_i)$ be the permutation of $\pi(a_1),\ldots,\pi(a_i)$ that is in the same relative order as $\sigma'$. Then $\pi$ and $\pi'$ are primitive structure equivalent, where $\pi'$ is $\pi$ but the subsequence $\pi(a_1),\ldots,\pi(a_i)$ is replaced by $\pi'(a_1),\ldots,\pi'(a_i)$, i.e. define $\pi'(a_j)$ as before and $\pi'(j)=\pi(j)$ for $j\notin\{a_1,\ldots,a_i\}$.
\end{lem}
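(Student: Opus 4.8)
The plan is to establish all three assertions tree by tree: for every underlying rooted tree $T$ I will exhibit a bijection between the labellings of $T$ that make it an ordered primitive cluster for the first pattern and those that make it one for the second, which is exactly what Definition~\ref{psequivdef} requires. I will use freely the structural facts recorded around Lemma~\ref{pcdecomp}: a primitive cluster for a pattern of length $k$ has depth exactly $k$ with every leaf at depth $k$, every vertex lies on a root-to-leaf path, the highlighted instances are precisely those paths, and any tree all of whose highlighted instances contain the root automatically meets the cluster connectedness condition.

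For complementation I would send a labelling $L\colon T\to[n]$ to the labelling $v\mapsto n+1-L(v)$; this is an involution on bijective labellings of $T$, it turns each root-to-leaf path reading $\pi$ into one reading $\overline\pi$, and since all highlighted instances pass through the root the complemented labelling is again an ordered primitive cluster, now for $\overline\pi$, so the two counts coincide for every $T$. For prepending I would note that in any primitive cluster for $k+1,\pi$ the root carries the global maximum label $n$ (it is largest on every root-to-leaf path, and every vertex lies on one); deleting it, each ordered child-subtree $T_1,\dots,T_d$ of $T$ must itself be a primitive cluster for $\pi$ on its own label set, since the last $k$ entries of $k+1,\pi$ have relative order $\pi$. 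Thus an ordered-primitive-cluster labelling of $T$ for $k+1,\pi$ is exactly the data of a partition of $[n-1]$ into an ordered list of blocks of sizes $|T_1|,\dots,|T_d|$ together with an ordered-primitive-cluster labelling of each $T_j$ for $\pi$; the number of such partitions depends only on $T$ (it is a multinomial coefficient in the subtree sizes), not on $\pi$, and the number of cluster labellings of each $T_j$ agrees for $\pi$ and $\pi'$ by hypothesis, so multiplying over $j$ gives the claim, both sides vanishing unless $T$ has the required depth-$(k+1)$ shape.

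For the recursive claim I would mimic the proof of Lemma~\ref{swaplem}. Fix $T$ and a labelling making it an ordered primitive cluster for $\pi$, and for each vertex $v$ of depth $a_1$ let $S_v$ be the set of descendants of $v$ of depth in $\{a_1,\dots,a_i\}$; the sets $S_v$ are pairwise disjoint and contained in disjoint subtrees of $T$. The hypotheses — that $\{\pi(a_1),\dots,\pi(a_i)\}=\{x,\dots,y\}$ is an interval of values and that $x-1$ and $y+1$, when they exist, occur before $\pi(a_1)$, hence at ancestors $u_{x-1},u_{y+1}$ of $v$ — force $L(u_{x-1})<L(s)<L(u_{y+1})$ for every $s\in S_v$ and, more generally, force every vertex comparable to some element of $S_v$ but lying in no $S_v$ to have its label entirely above or entirely below $\{L(s):s\in S_v\}$ (a short case analysis on the position of that vertex relative to $v$ — an ancestor of $v$, a descendant at a ``gap'' depth strictly between $a_1$ and $a_i$, or a descendant at depth exceeding $a_i$ — using injectivity of $\pi$). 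Hence the labels within each $S_v$ may be permuted independently without disturbing any root-to-leaf comparison outside $S_v$; contracting $T$ onto $S_v$ produces an ordered primitive cluster for $\sigma$, and applying a structure-preserving bijection $\varphi$ from ordered primitive clusters for $\sigma$ to those for $\sigma'$ (which exists by primitive structure equivalence of $\sigma$ and $\sigma'$) simultaneously over all $v$ of depth $a_1$ converts every root-to-leaf path reading $\pi$ into one reading $\pi'$. This map is structure-preserving and is inverted by applying $\varphi^{-1}$ in the same fashion, so it is the desired bijection; as in Lemma~\ref{swaplem}, the case $a_i=k$ calls for the minor extra bookkeeping of passing to order-enriched clusters, the enrichment count being the same for both patterns.

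I expect this recursive claim to be the main obstacle — the real content is the case analysis establishing the ``pinning'' of labels around each $S_v$, together with the $a_i=k$ leaf-ordering bookkeeping — although both are direct transcriptions of the argument in Lemma~\ref{swaplem}. I would also remark that, unlike Lemma~\ref{swaplem}, the present statement needs neither a non-consecutivity hypothesis on the $a_j$ nor an equal-heights hypothesis on the $\pi(a_j)$: primitive structure equivalence records only the underlying tree, not the heights of vertices, so nothing height-related has to be preserved under the bijection.
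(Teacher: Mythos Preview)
Your proof is correct and follows essentially the same approach as the paper's: complementation for the first part, root-deletion and independent treatment of the child subtrees for the second, and the Lemma~\ref{swaplem} contraction-and-apply-$\varphi$ argument for the third. One small simplification you missed: since you are already working with \emph{ordered} primitive clusters, the leaves at depth $a_i=k$ already carry a distinguished order, so the contraction to $S_v$ is automatically an ordered primitive cluster and the ``order-enriched'' bookkeeping from Lemma~\ref{swaplem} is unnecessary here---this is exactly why the paper can also drop the non-consecutivity hypothesis.
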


\begin{proof}
The first part follows trivially by complementation of the labels of each ordered primitive cluster. For the second part, suppose we have a structure-preserving bijection $\varphi$ between ordered primitive clusters for $\pi$ and ordered primitive clusters for $\pi'$. Any ordered primitive cluster $C$ for $k+1,\pi$ or $k+1,\pi$ must have the largest label at the root. We can apply $\varphi$ to every subtree rooted at a child of the root of $C$, which is an ordered primitive cluster for $\pi$. This gives a structure-preserving bijection between ordered primitive clusters for $k+1,\pi$ and $k+1,\pi'$, as desired. The proof of third part is essentially the same as the proof of Lemma 3.17. We permute the labels of vertices of depth in $
\{a_1,\ldots,a_i\}$, where each subtree of each vertex of depth $a_1$ is permuted separately. Each subtree is permuted by considering the contraction of the tree to the vertices of depth $\{a_1,\ldots,a_i\}$ and applying the structure-preserving bijection given by the primitive structure equivalence of $\sigma$ and $\sigma'$. In contrast with the previous part, we do not require that the indices are not consecutive, because we do not need the vertex heights to be preserved in this case. Since we are already working with ordered primitive clusters, the vertices of depths in $\{a_1,\ldots,a_i\}$ already have the structure of an ordered rooted forest, and so there is no issue when permuting vertices of consecutive depths.
\end{proof}

Lemmas \ref{ptostrong}, \ref{boost}, and \ref{swaplem} reduce the problem of finding strong equivalences to the problem of finding $p$-equivalences for all $1\le p\le s$ to the problem of finding $1$-equivalences to the problem of finding primitive structure equivalences, many of which are described in Lemma \ref{pselem}. These primitive structure equivalences can be passed back up through the chain of lemmas to prove many strong equivalences. It is possible that there are permutations that satisfy the conditions of each lemma different from the ones given by the next lemma, but we are not aware of such methods. Doing so successfully would yield new strong equivalences.

\begin{exmp}
\label{cfwex}
Searching through all permutations of length $k\le5$ for $1$-equivalences given by Lemmas \ref{boost}, \ref{swaplem}, and \ref{pselem}, we find the following $1$-equivalences:
\begin{itemize}
    \item 3142, 3241
    \item 31425, 31524, 32415, 32514
    \item 31452, 32451
    \item 31542, 32541
    \item 52314, 52413
    \item 43152, 43251
    \item 53142, 53241
\end{itemize}
These give rise to the following infinite families of equivalences by Lemma 3.16:
\begin{itemize}
    \item $125364\stackrel c\sim125463,1236475\stackrel c\sim1236574,\ldots$
    \item $1253647\stackrel c\sim1253746\stackrel c\sim1254637\stackrel c\sim1254736,$
    
    $12364758\stackrel c\sim12364857\stackrel c\sim12365748\stackrel c\sim12365847,\ldots$
    \item $12364785\stackrel c\sim12365784,123475896\stackrel c\sim123476895,\ldots$
    \item $1253764\stackrel c\sim1254763,12364875\stackrel c\sim12365874,\ldots$
    \item $1274536\stackrel c\sim1274635,12385647\stackrel c\sim12385746,\ldots$
    \item $1265374\stackrel c\sim1265473,12376485
    \stackrel c\sim12376584,\ldots$
    \item $1275364\stackrel c\sim1275463,12386475\stackrel c\sim12386574,\ldots$
\end{itemize}
These are in fact strong equivalences, but we have written $\stackrel c\sim$ instead of $\stackrel{sc}\sim$. Notably, many of these pairs are not c-Wilf equivalent in terms of permutations, specifically the non-overlapping permutations with different ending terms due to \cite[Theorem 1.9]{DE}. It seems difficult to describe all $1$-equivalences given by the Lemma \ref{swaplem}, but in principle all of the $1$-equivalences between permutations of length $k$ given by Lemma \ref{swaplem} can can be computed by checking all of the permutations of length $k$.
\end{exmp}

With this family of equivalences, we are now able to prove Theorems \ref{cequiv} and \ref{cclass}.

\begin{proof}[Proof of Theorem \ref{cclass}]
We now construct a c-forest-Wilf equivalence class of size at least $2^{n-4}$ among patterns of length $n$ for $n\ge6$. The cases of $n\le5$ follow trivially by complementation. We first give a large class of $1$-equivalences. Note that in a classically $\{213,231\}$-avoiding permutation, every term is either greater than or less than all of the terms following it. If $n=2k+1$ is odd, we consider classically $\{213,231\}$-avoiding permutations $a_1,\ldots,a_{k-1}$ and $b_1,\ldots,b_{k-1}$ of $1,\ldots,k-1$ and $k+1,\ldots,2k-1$. Then note that in $k,a_1,b_1,\ldots,a_{k-1},b_{k-1}$ the heights of $a_1,b_1,\ldots,a_{k-1},b_{k-1}$ are $1,2,\ldots,1,2$ so by Lemma \ref{swaplem}, $k,a_1,b_1,\ldots,a_{k-1},b_{k-1}$ over all classically $\{213,231\}$-avoiding permutations $a_1,\ldots,a_{k-1}$ and $b_1,\ldots,b_{k-1}$ are all $1$-equivalent. Thus, over all classically $\{213,231\}$-avoiding permutations $a_1,\ldots,a_{k-1}$ and $b_1,\ldots,b_{k-1}$ of $1,\ldots,k-1$ and $k+1,\ldots,2k-1$, the patterns $1,2,k+2,a_1+2,b_1+2,\ldots,a_{k-1}+2,b_{k-1}+2$ all lie in the same c-forest-Wilf equivalence class by Lemma \ref{boost}. There are $2^{k-2}\cdot2^{k-2}=2^{2k-4}=2^{n-5}$ such permutations. If $n=2k$ is even, we consider classically $\{213,231\}$-avoiding permutations $a_1,\ldots,a_{k-1}$ and $b_1,\ldots,b_{k-2}$ of $1,\ldots,k-1$ and $k+1,\ldots,2k-2$. By the same argument as before, $k,a_1,b_1,\ldots,a_{k-2},b_{k-2},a_{k-1}$ over all classically $\{213,231\}$-avoiding permutations $a_1,\ldots,a_{k-1}$ and $b_1,\ldots,b_{k-2}$ are all $1$-equivalent. Thus the patterns $1,2,k+2,a_1+2,b_1+2,\ldots,a_{k-2}+2,b_{k-2}+2,a_{k-1}+2$ over all $\{213,231\}$-avoiding permutations $a_1,\ldots,a_{k-1}$ and $b_1,\ldots,b_{k-2}$ of $1,\ldots,k-1$ and $k+1,\ldots,2k-2$ all lie in the same c-forest-Wilf equivalence class. There are $2^{k-2}\cdot2^{k-3}=2^{2k-5}=2^{n-5}$ such permutations. In both cases, we found $2^{n-5}$ such permutations, and taking complements gives a c-forest-Wilf equivalence class of size at least $2^{n-4}$. Note that for $n=6$ and $n=7$ we get the equivalences from the first two examples in Example \ref{cfwex}.
\end{proof}

Notably in the case when $n$ is even, all of the permutations are non-overlapping so we actually get permutations from $\frac{n-2}2$ different c-Wilf equivalence classes for permutations in the same equivalence class by \cite[Theorem 1.9]{DE}. In contrast to classical forest-Wilf equivalence, beyond trivial symmetries c-forest-Wilf equivalence seems to be largely unrelated to c-Wilf equivalences for permutations.

With our construction, we are now able to give the proof of Theorem \ref{cequiv}. We first need the following lemmas.

\begin{lem}
\label{lotsconsavoid}
For all $c<1$, there exists a positive integer $k$ such that for sufficiently large $n$, at least $c^nn!$ permutations of $[n]$ avoid the consecutive pattern $1,\ldots,k$.
\end{lem}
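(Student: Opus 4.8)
The plan is to produce a clean, self‑contained lower bound on the number $\alpha_k(n)$ of permutations of $[n]$ that consecutively avoid $1\cdots k$, of the shape $\alpha_k(n)\ge n!\cdot\bigl(2^{-1/(k-1)}\bigr)^n$, and then to choose $k$ large enough that $2^{-1/(k-1)}\ge c$, which is possible since $2^{-1/(k-1)}\to 1$ as $k\to\infty$.

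First I would isolate a purely combinatorial observation: if a permutation $\sigma$ of $[n]$ has a descent at every position of the form $j(k-1)$ with $1\le j(k-1)\le n-1$, then $\sigma$ consecutively avoids $1\cdots k$. Indeed, a consecutive occurrence of $1\cdots k$ is precisely a run of $k-1$ consecutive ascents, occupying the steps $i,i+1,\dots,i+k-2$ with $i\ge 1$ and $i+k-2\le n-1$; but any $k-1$ consecutive integers contain a multiple of $k-1$, and that step is forced to be a descent, a contradiction. Consequently $\alpha_k(n)$ is at least the number $D$ of permutations of $[n]$ whose descent set contains $S:=\{\,j(k-1):1\le j(k-1)\le n-1\,\}$, where $|S|=\lfloor(n-1)/(k-1)\rfloor$.

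Next I would evaluate $D$ exactly, assuming $k\ge 3$. Applying the complementation bijection $\sigma\mapsto\bigl(n+1-\sigma(1),\dots,n+1-\sigma(n)\bigr)$, which interchanges ascents and descents, $D$ equals the number of permutations of $[n]$ whose descent set is contained in $A:=\{1,\dots,n-1\}\setminus S$. By the standard enumeration of permutations with a prescribed upper bound on the descent set, this count is the multinomial coefficient $\binom{n}{\beta}$, where $\beta$ is the composition of $n$ obtained by deleting the elements of $S$ from the sequence $(0,1,2,\dots,n)$ and taking successive differences. Since $k-1\ge 2$, no two elements of $S$ are adjacent, so $\beta$ has exactly $|S|$ parts equal to $2$ and $n-2|S|$ parts equal to $1$; hence $D=n!/2^{|S|}$. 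Putting the pieces together, $\alpha_k(n)\ge n!/2^{|S|}\ge n!\,2^{-(n-1)/(k-1)}\ge n!\,\bigl(2^{-1/(k-1)}\bigr)^n$.

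Finally, given $c<1$, I would pick an integer $k\ge 3$ with $2^{-1/(k-1)}\ge c$ and conclude $\alpha_k(n)\ge c^n n!$ for every $n$, in particular for all sufficiently large $n$. The one step demanding genuine care is the exact enumeration in the third paragraph: one must correctly invoke the descent‑composition formula and, crucially, use $k\ge 3$ so that $S$ is spread out — the claimed formula fails for $k=2$, where the only permutation consecutively avoiding $12$ is the strictly decreasing one. Everything else is routine bookkeeping.
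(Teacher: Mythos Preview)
Your proof is correct. The approach, however, differs from the paper's. The paper partitions the positions $1,\dots,n$ into consecutive blocks of size $k$ and observes that if no block is monotonically increasing, then the permutation consecutively avoids $1\cdots(2k)$; the probability of this event is at least $(1-1/k!)^{n/k}$, which tends to $1$ in the relevant sense as $k\to\infty$. You instead force a descent at each multiple of $k-1$, then count such permutations exactly via complementation and the descent--composition multinomial, obtaining $n!/2^{|S|}$. Your argument has the minor advantage of bounding avoidance of $1\cdots k$ directly (rather than $1\cdots(2k)$) and yielding an explicit constant $2^{-1/(k-1)}$; the paper's argument is shorter and avoids invoking the descent--composition formula. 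Both are entirely elementary, and your caveat about needing $k\ge 3$ is apt and correctly handled.
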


\begin{proof}
Divide a uniform random permutation of $[n]$ into consecutive blocks of $k$ numbers, where the last block may have less than $k$ numbers. If each block of $k$ numbers (so the last block is excluded if it has less than $k$ numbers) is not increasing, then the whole permutation avoids the consecutive pattern $1,\ldots,2k$. The probability that this occurs is $\left(1-\frac1{k!}\right)^{\lfloor n/k\rfloor}\ge\left(1-\frac1{k!}\right)^{n/k}$. Since $\left(1-\frac1{k!}\right)^{1/k}\rightarrow1$ as $k\rightarrow\infty$, the lemma follows.
\end{proof}

In order to make use of our construction, we need to find many $1$-equivalences using Lemma \ref{swaplem}. The easiest way to do so is if the numbers $3,1,2$ appear in that order and are not next to each other. In that case, we can then switch $1$ and $2$. A negligible number of permutations have two of $1,2,3$ next to each other, even when we restrict to $1\cdots k$ (consecutively) avoiding permutations. A third of the remaining permutations have $3$ before $1$ and $2$, giving us enough permutations to work with. The next lemma formalizes this.

\begin{lem}
\label{lotscanswap}
For all $c<1$, there exists a positive integer $k$ such that for sufficiently large $n$, at least $c^nn!$ permutations of $[n]$ avoid the consecutive pattern $1,\ldots,k$ and satisfy the additional property that $1,2,3$ are not consecutive in the permutation and $3$ appears before $1$ and $2$.
\end{lem}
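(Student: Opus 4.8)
The plan is to bundle the three required constraints into a single event whose probability can be estimated \emph{directly} by a block decomposition, rather than starting from the set of consecutive-$1\cdots k$-avoiders and trying to cut it down afterwards; the latter is doomed because that set is already exponentially sparse, so a vanishing fraction of \emph{all} permutations need not be a vanishing fraction of it. Fix $c<1$ and choose $k'$ with $(1-1/k'!)^{1/k'}>c$, which is possible since $(1-1/k'!)^{1/k'}\to1$; the lemma will hold with $k=2k'$. Put $m=\lfloor n/k'\rfloor$ and cut the first $mk'$ positions of $[n]$ into consecutive blocks $B_1,\dots,B_m$ of size $k'$ (the last $<k'$ positions are ignored). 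For a permutation $\sigma$ of $[n]$ let $P=\sigma^{-1}(\{1,2,3\})$, and let $\mathcal E$ be the event that (i) no two positions of $P$ are adjacent, and (ii) every block $B_j$ with $B_j\cap P=\emptyset$ fails to be increasing.

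First I would verify $\mathcal E\Rightarrow(\sigma\text{ avoids }1,\dots,2k'\text{ consecutively})$. An easy alignment observation shows that any $2k'-1$ consecutive positions of $[n]$ contain some full block $B_j$ with $j\le m$ (take the least integer $\equiv1\pmod{k'}$ that is $\ge$ the left endpoint). Now suppose $\sigma$ had an increasing run on $2k'$ consecutive positions. If that run is disjoint from $P$ it contains such a block, which is increasing, contradicting (ii). Otherwise it meets $P$; since each of $1,2,3$ is smaller than every other value, two or three elements of $P$ inside the run would have to be the first two (or three) entries of the run, hence at consecutive positions, contradicting (i). So the run meets $P$ in exactly one position, whose value is the minimum of the run and therefore its first entry; deleting that entry leaves an increasing run on $2k'-1\ge k'$ consecutive positions disjoint from $P$, which again contains a full $P$-avoiding block — contradicting (ii). Moreover (i) immediately gives that $1,2,3$ do not occupy three consecutive positions.

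Next I would lower-bound $\Pr[\mathcal E]$ for uniform $\sigma$ and then exploit the symmetry of $\mathcal E$ in the three smallest values. Conditioning on $P$, the map $\sigma|_{[n]\setminus P}$ is a uniform bijection onto $\{4,\dots,n\}$, and for the pairwise disjoint blocks $B_j$ with $B_j\cap P=\emptyset$ the events ``$B_j$ is increasing'' are independent, each of probability $1/k'!$ (condition further on the value set of each block); since there are at most $m$ such blocks, $\Pr[(ii)\mid\sigma^{-1}(\{1,2,3\})=P]\ge(1-1/k'!)^m$ for every $P$, while $\Pr[(i)]\ge1-6/n\ge\frac12$ for large $n$, so $\Pr[\mathcal E]\ge\frac12(1-1/k'!)^m$. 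Crucially, $\mathcal E$ depends only on the \emph{set} $P$ and on $\sigma|_{[n]\setminus P}$, not on which of $1,2,3$ occupies which position of $P$; and for uniform $\sigma$ that arrangement is uniform over its $6$ possibilities and independent of $(P,\sigma|_{[n]\setminus P})$, so it remains uniform after conditioning on $\mathcal E$. Exactly two of the six arrangements place $3$ before both $1$ and $2$, so a $\frac13$-fraction of the permutations in $\mathcal E$ satisfy the last requirement as well. Hence the number of permutations of $[n]$ avoiding $1,\dots,2k'$ consecutively with $1,2,3$ not in three consecutive positions and $3$ before $1$ and $2$ is at least $\frac13\cdot\frac12(1-1/k'!)^m\,n!\ge\frac16\big((1-1/k'!)^{1/k'}\big)^n n!>c^n n!$ for $n$ large, by the choice of $k'$. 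The one delicate point is this estimate: demanding non-increasingness only of the blocks \emph{untouched} by $\{1,2,3\}$ is exactly what keeps $\mathcal E$ symmetric in the three smallest values (a block containing a small value is biased toward being non-increasing) while still leaving $\ge m-3$ blocks for the product bound; insisting on all blocks, or trying to correct for the few special blocks by a union bound, reinstates a $1/\mathrm{poly}(n)$-type error term that overwhelms the exponentially small main term.
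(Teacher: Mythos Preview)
Your argument is correct and genuinely different from the paper's. The paper proceeds exactly by the route you called ``doomed'': it starts from the set of consecutive $1\cdots k$-avoiders, uses the Ehrenborg--Kitaev--Perry asymptotic $a_{n,k}/n!=b_kc_k^n+O(d_k^n)$ to control the ratio $a_{n-1,k}/a_{n,k}\sim 1/(nc_k)$, shows that at most $6a_{n-1,k}$ avoiders have two of $1,2,3$ adjacent, and then applies the $1/3$ symmetry. So that approach is not doomed, it just requires an external asymptotic result to make the subtraction work.

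Your approach sidesteps this entirely: by building a single event $\mathcal E$ that already forces avoidance of $1\cdots 2k'$ and non-adjacency of $1,2,3$, and by deliberately exempting the blocks that meet $P=\sigma^{-1}(\{1,2,3\})$ from the non-increasing requirement, you keep $\mathcal E$ symmetric in the three smallest values and get the $1/3$ factor for free. The verification that $\mathcal E$ forces avoidance (via the observation that any $2k'-1$ consecutive positions contain a full block $B_j$ with $j\le m$, and that multiple elements of $\{1,2,3\}$ in an increasing run would have to sit at its front and hence be adjacent) is clean and correct. The price you pay is doubling the pattern length to $2k'$, which is irrelevant for the statement. The gain is a fully elementary, self-contained proof that does not invoke the spectral-theoretic EKP result.
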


\begin{proof}
Let $a_{m,k}$ denote the number of permutations of $[n]$ that avoid the consecutive pattern $1\cdots k$. By a theorem of Ehrenborg, Kitaev, and Perry in \cite{EKP}, there exist positive constants $b_k,c_k>d_k$ with $\frac{a_{n,k}}{n!}=b_kc_k^n+O(d_k^n)$. By Lemma \ref{lotsconsavoid}, we have that $c_k\rightarrow1$ as $k\rightarrow\infty$.

We first show that there are at most $6a_{n-1,k}$ permutations on $[n]$ that avoid the consecutive pattern $1\cdots k$ but have two of $1,2,3$ next to each other. Indeed, note that if $1$ and $2$ are consecutive, then by deleting $2$ and subtracting $1$ from all other numbers, we obtain a permutation on $[n-1]$ that avoids the consecutive pattern $1\cdots k$. For each permutation $\pi$ on $[n-1]$ that avoids the consecutive pattern $1\cdots k$, there are at most two such permutations on $[n]$ in which we could have deleted $2$ to obtain $\pi$. To recover the original permutations, we just insert $2$ in one of the two positions next to $1$ and increment all other numbers by $1$. If $2$ and $3$ are consecutive, we do the same thing but delete $3$ and subtract $1$ from all other numbers greater than $2$, resulting in at most $2a_{n-1,k}$ other permutations. If $1$ and $3$ are consecutive but not adjacent to $2$, then we do the same thing but delete $3$ and subtract $1$ from all other numbers greater than $3$, resulting in at most $2a_{n-1,k}$ other permutations. Here, we are using the fact that when $2$ is not adjacent to $1$ and $3$, we do not create any instances of $1\cdots k$ when we delete $3$. This covers all cases, so there are at most $6a_{n-1,k}$ such permutations in total.

To finish, note that in a permutation that avoids the consecutive pattern $1\cdots k$ such that no two of $1,2,3$ are consecutive, we are free to permute $1,2,3$ without affecting this property. Thus, in exactly a third of these permutations does $3$ appear before $1$ and $2$. It follows that the number of permutations with the desired property is at least $\frac{a_{n,k}-6a_{n-1,k}}3=b_k\left(\frac13-\frac2{nc_k}\right)c_k^nn!+O(d_k^nn!)$. Since $c_k>d_k$ and $c_k\rightarrow1$ as $k\rightarrow\infty$, the lemma follows.
\end{proof}

Theorem \ref{cequiv} then follows from combining Lemmas \ref{boost}, \ref{swaplem}, and \ref{lotscanswap} in the appropriate way.

\begin{proof}[Proof of Theorem \ref{cequiv}]
To complete the proof, we show that our construction for equivalences yields $c^nn!$ permutations of $[n]$ with nontrivial equivalences for all $c<1$ and sufficiently large $n$ when combined with Lemma \ref{lotscanswap}.

Let $\epsilon>0$ be a real number such that $c+\epsilon<1$, and let $k$ be a positive integer such that for all sufficiently large $n$, at least $(c+\epsilon)^nn!$ permutations of $[n]$ avoid the consecutive pattern $1\cdots k$ and satisfy the additional property that $1,2,3$ are not consecutive in the permutation and $3$ appears before $1$ and $2$. Consider a permutation $\sigma=\sigma(1)\cdots\sigma(n-k-1)$ of $[n-k-1]$ with the aforementioned properties. We then consider the permutation $\pi=1,\ldots,k,n,\sigma(1)+k,\ldots,\sigma(n-k-1)+k$ of $[n]$. Note that $\pi$ is grounded (its streak is equal to its max height) with streak $k$. This is as $n>\sigma(1)+k$ and $\sigma(1)+k,\ldots,\sigma(n-k-1)+k$ avoids the consecutive pattern $1\cdots k$, so the height of all terms in the permutation are at most $k$. Let $\sigma(p)=1,\sigma(q)=2,\sigma(r)=3$, so $r<p,q$ and no two of $p,q,r$ are consecutive. We then have that $\pi(k+1+p)=k+1$ and $\pi(k+1+q)=k+2$ are not consecutive in $\pi$ and have the same height of $1$. Since $\pi(k)=k$ and $\pi(k+1+r)=k+3$ appear before $k+1$ and $k+2$ in $\pi$, by our construction $\pi$ is c-forest-Wilf equivalent to $\pi'$, where $\pi'=\pi(1),\ldots,\pi(k+p),\pi(k+q+1),\pi(k+p+2),\ldots,\pi(k+q),\pi(k+p+1),\pi(k+q+2),\ldots,\pi(n)$, i.e. $\pi$ but we switch $k+1$ and $k+2$. This is essentially an application of Lemmas \ref{boost} and \ref{swaplem}, but we have written it in this way to count the number of pairs obtained. Since $\pi$ and $\pi'$ are not complements, $\pi$ is a part of a nontrivial c-forest-Wilf equivalence. Each such $\sigma$ results in a different $\pi$, and there are at least $(c+\epsilon)^{n-k-1}(n-k-1)!\ge\frac{(c+\epsilon)^nn!}{((c+\epsilon)n)^{k+1}}$ such $\sigma$. This is at least $c^nn!$ for sufficiently large $n$, as desired.
\end{proof}

\subsection{Necessary conditions for strong equivalences between grounded permutations}
\label{ground}
\hspace*{\fill} \\
Subsection \ref{cfam} establishes some sufficient conditions for grounded permutations to be equivalent, and in this subsection we examine the other direction. Our goal is to determine some necessary conditions for two grounded permutations $\pi$ and $\pi'$ to be equivalent. We follow the same proof methodology of Theorem \ref{gp15} given by Garg and Peng, finding formulas for specific cluster numbers and comparing them between $\pi$ and $\pi'$. Note that if $\pi\stackrel c\sim\pi'$, then $\pi$ and $\pi'$ have the same length.

\begin{prop}
\label{sameh}
If $\pi$ and $\pi'$ are grounded permutations of length $k$ with $\pi\stackrel{sc}\sim\pi'$, then $\pi$ and $\pi'$ have the same streak. Furthermore, the heights of $i$ in $\pi$ and $\pi'$ are equal for all $i\in\{2,\ldots,k\}$.
\end{prop}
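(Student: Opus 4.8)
The plan is to invoke Theorem~\ref{gpfclust}, which turns $\pi\stackrel{sc}\sim\pi'$ into the statement that, for all $m,n\ge 0$, the number of $m$-clusters of size $n$ is the same for $\pi$ and $\pi'$, and then — exactly as Garg and Peng do for Theorem~\ref{gp15} — to evaluate this common quantity on a well-chosen family of clusters. Here the right family is the $2$-clusters of ``large'' size $2k-\ell$ (two highlighted instances overlapping in a chain of length $\ell$) for small $\ell$: the grounded hypothesis forces these to have a very rigid shape. Since a grounded pattern starts with $1$, for a grounded pattern $\rho$ and $\ell\ge 1$ set $A_\ell(\rho)=\{v\in\{2,\dots,k\}:\text{the height of }v\text{ in }\rho\text{ is}\ge\ell\}$, a nested family ($A_1(\rho)=\{2,\dots,k\}\supseteq A_2(\rho)\supseteq\cdots$) from which the whole height function of $\rho$ is recovered. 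Writing $s,s'$ for the streaks, it suffices to prove $A_\ell(\pi)=A_\ell(\pi')$ for every $\ell$: this gives the height statement, and since $\rho$ is grounded exactly when its maximum height equals its streak, it also gives $s=s'$, as $s=\max\{\ell:A_\ell(\pi)\neq\varnothing\}$.

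The first step I would carry out is to classify the $2$-clusters of size $2k-\ell$ of a grounded pattern $\rho$ for $1\le\ell\le s+1$. Two distinct highlighted instances are downward paths of equal length $k$ in a tree, so their intersection is a contiguous sub-chain — here of length $\ell$ — and, by the unique-parent argument used in the proof of Lemma~\ref{rootsmall}, this chain is a prefix of at least one of the two instances. This yields two types. A \emph{prefix--prefix} cluster is one where the shared chain is a prefix of both instances: the underlying tree is a chain of $\ell$ vertices followed by two independent branches of length $k-\ell$, and a direct count shows the number of such clusters depends only on $k$, $\ell$, and (the relative order of) the initial segment $\rho(1)\cdots\rho(\ell)$. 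A \emph{hanging} cluster has one instance $I$ through the root of the cluster and the other, $J$, branching off $I$; letting $q$ be the position along $I$ of the lowest vertex shared by $I$ and $J$, the constraint is that $\rho(q-\ell+1)\cdots\rho(q)$ is order-isomorphic to $\rho(1)\cdots\rho(\ell)$. For $\ell\le s$ this says $\rho(q-\ell+1)\cdots\rho(q)$ is increasing, i.e. the height of $\rho(q)$ in $\rho$ is $\ge\ell$; and a bookkeeping argument on the label set of $I$, which is then forced to contain $\{1,\dots,\rho(q)\}$, gives that the number of hanging clusters with parameter $q$ is exactly $\binom{2k-\ell-\rho(q)}{k-\rho(q)}$. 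Re-indexing by $v=\rho(q)$ and using that height $\ge\ell$ forces $v\ge\ell+1$ (the witnessing increasing run avoids position $1$, which carries the value $1$), the number of hanging clusters equals $\sum_{v\in A_\ell(\rho)}\binom{2k-\ell-v}{k-v}$ when $\ell\le s$, and equals $0$ when $\ell=s+1$ (no value of a grounded pattern of streak $s$ has height $s+1$).

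Now compare. If $s\neq s'$, say $s<s'$, take $\ell=s+1$: then $\pi(1)\cdots\pi(\ell)$ and $\pi'(1)\cdots\pi'(\ell)$ are both order-isomorphic to $1\cdots(s+1)$ (for $\pi$ because $\pi(s+1)\ge s+2$, for $\pi'$ because its streak is $\ge s+1$), so the prefix--prefix counts agree, whereas the hanging count is $0$ for $\pi$ but strictly positive for $\pi'$ (the value $\pi'(s'+1)$ has height $\ge s'\ge s+1$ and contributes a positive binomial coefficient, as $s\le k-2$); this contradicts equality of the $2$-cluster count of size $2k-(s+1)$, so $s=s'$. Next, for each $\ell$ with $1\le\ell\le s$ the prefix--prefix counts for $\pi$ and $\pi'$ agree (both begin with $1\cdots\ell$), so equality of the $2$-cluster counts of size $2k-\ell$ gives
\[ \sum_{v\in A_\ell(\pi)}\binom{2k-\ell-v}{k-v}\;=\;\sum_{v\in A_\ell(\pi')}\binom{2k-\ell-v}{k-v}. \]
Since $A_\ell(\pi),A_\ell(\pi')\subseteq\{\ell+1,\dots,k\}$, it remains to observe that the coefficients $\binom{2k-\ell-v}{k-v}$ for $v\in\{\ell+1,\dots,k\}$ are super-increasing: by the hockey-stick identity $\sum_{v=v_0+1}^{k}\binom{2k-\ell-v}{k-v}=\binom{2k-\ell-v_0}{k-v_0-1}$, which is strictly less than $\binom{2k-\ell-v_0}{k-v_0}$ because $k-v_0<\tfrac12\big((2k-\ell-v_0)+1\big)$ for $v_0>\ell-1$. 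Hence a subset of $\{\ell+1,\dots,k\}$ is determined by the sum of these coefficients, so $A_\ell(\pi)=A_\ell(\pi')$. Together with $A_1(\pi)=A_1(\pi')=\{2,\dots,k\}$ and, for $\ell>s=s'$, $A_\ell(\pi)=A_\ell(\pi')=\varnothing$, this gives $A_\ell(\pi)=A_\ell(\pi')$ for all $\ell$, as needed.

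The super-increasing step and the streak case are routine. The main obstacle is the enumeration in the second paragraph: carefully justifying that ``prefix--prefix'' and ``hanging'' really exhaust the $2$-clusters of size $2k-\ell$ (a case analysis of how two equal-length downward paths overlap in a rooted tree), and then running the label-set bookkeeping to extract the clean formula $\binom{2k-\ell-\rho(q)}{k-\rho(q)}$ for hanging clusters together with the tail-independence of the prefix--prefix count. One should also check the small-parameter corner cases (e.g. $k-\ell$ or $k-\rho(q)$ vanishing), all of which are harmless given $s\le k-2$.
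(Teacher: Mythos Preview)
Your approach is essentially the paper's: enumerate $2$-clusters of size $2k-\ell$, split into ``prefix--prefix'' and ``hanging'' types, extract the hanging formula $\sum_{v\in A_\ell}\binom{2k-\ell-v}{k-v}$, and use the super-increasing property of these binomials to recover the height sets. The hanging enumeration and the super-increasing step are fine (and your $A_\ell$ is arguably cleaner than the paper's complementary $S_h$).

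The gap is in the streak step. Your claim that the prefix--prefix count ``depends only on $k$, $\ell$, and (the relative order of) the initial segment $\rho(1)\cdots\rho(\ell)$'' is false: it depends on the actual \emph{values} $\rho(1),\dots,\rho(\ell)$, because those values determine how many branch labels must fall between consecutive prefix labels. At $\ell=s+1$ (with $s<s'$) the two initial segments are both increasing but not equal: $\pi'(s+1)=s+1$ while $\pi(s+1)=p\ge s+2$. A direct count gives prefix--prefix totals
\[
\tfrac12\binom{2(k-s-1)}{k-s-1}\quad\text{for }\pi',\qquad
\tfrac12\binom{2(p-s-1)}{p-s-1}\binom{2(k-p)}{k-p}\quad\text{for }\pi,
\]
and these disagree unless $p=k$. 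So ``hanging $=0$ for $\pi$, positive for $\pi'$'' is not by itself a contradiction. The paper closes this by the elementary inequality that $\binom{2a}{a}\binom{2(n-a)}{n-a}\le\binom{2n}{n}$ with equality only at $a\in\{0,n\}$ (take $n=k-s-1$, $a=p-s-1\ge 1$): the prefix--prefix count for $\pi'$ already weakly dominates that for $\pi$, and the strictly positive hanging term for $\pi'$ then forces $r_{2k-s-1,2}(\pi')>r_{2k-s-1,2}(\pi)$. With this single extra estimate your argument goes through; the height portion (for $\ell\le s$, where the initial segments literally coincide as $1,\dots,\ell$) is correct as written.
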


\begin{proof}
To proceed, define the forest cluster numbers $r_{n,m}$ for $\pi$ as the number of $m$-clusters of size $n$ for $\pi$. Suppose that $\pi$ has streak $s$. We will consider $r_{2k-h,2}$ for $h\le s+1$. Note that $2$-clusters of size $2k-h$ consist of two instances of $\pi$ that intersect at $h$ vertices. One way in which this occurs is that the two instances share the first $h$ vertices. When $h=s+1$, there are no other ways in which this occurs, since the only increasing consecutive subsequence of $\pi$ of length $s+1$ is at the beginning. For $h\le s$, let $S_h\subseteq\{2,\ldots,k\}$ denote the terms of $\pi$ of height at most $h$ (here we refer to the terms themselves, not the indices). Figure \ref{groundclustfig} shows examples of such clusters.

\begin{figure}[ht]
    \centering
    \forestset{filled circle/.style={
      circle,
      text width=4pt,
      fill,
    },}
    \scalebox{0.7}{
    \begin{minipage}[b]{0.45\linewidth}
    \centering
    \begin{forest}
    for tree={filled circle, inner sep = 0pt, outer sep = 0 pt, s sep = 1 cm}
    [, fill=white
        [, edge=white, name=v1
            [, edge=red, name=v2
                [, edge=white, name=v3
                    [, edge=white, name=v4
                        [, edge=red, name=v5
                            [, edge=red, name=v6
                                [, edge=red, name=v7
                                ]
                            ]
                        ]
                        [, edge=blue, name=v8
                            [, edge=blue, name=v9
                                [, edge=blue, name=v10
                                    [, edge=blue, name=v11
                                    ]
                                ]
                            ]
                        ]
                    ]
                ]
            ]
        ]
    ]
    \path[red,out=-135,in=135] (v2.child anchor) edge (v3.parent anchor);
    \path[red,out=-135,in=135] (v3.child anchor) edge (v4.parent anchor);
    \path[blue,out=-45,in=45] (v2.child anchor) edge (v3.parent anchor);
    \path[blue,out=-45,in=45] (v3.child anchor) edge (v4.parent anchor);
    \end{forest}
    \end{minipage}
    \begin{minipage}[b]{0.45\linewidth}
    \centering
    \begin{forest}
    for tree={filled circle, inner sep = 0pt, outer sep = 0 pt, s sep = 1 cm}
    [, fill=white
        [, edge=white, name=v1
            [, edge=white, name=v2
                [, edge=white, name=v3
                    [, edge=white, name=v4
                        [, edge=white, name=v5
                            [, edge=red, name=v6
                                [, edge=red, name=v7
                                ]
                            ]
                            [, edge=blue, name=v8
                                [, edge=blue, name=v9
                                ]
                            ]
                        ]
                    ]
                ]
            ]
        ]
    ]
    \path[red,out=-135,in=135] (v1.child anchor) edge (v2.parent anchor);
    \path[red,out=-135,in=135] (v2.child anchor) edge (v3.parent anchor);
    \path[red,out=-135,in=135] (v3.child anchor) edge (v4.parent anchor);
    \path[red,out=-135,in=135] (v4.child anchor) edge (v5.parent anchor);
    \path[blue,out=-45,in=45] (v1.child anchor) edge (v2.parent anchor);
    \path[blue,out=-45,in=45] (v2.child anchor) edge (v3.parent anchor);
    \path[blue,out=-45,in=45] (v3.child anchor) edge (v4.parent anchor);
    \path[blue,out=-45,in=45] (v4.child anchor) edge (v5.parent anchor);
    \end{forest}
    \end{minipage}}
    \caption{Clusters for a pattern of length $7$ with two instances.}
    \label{groundclustfig}
\end{figure}
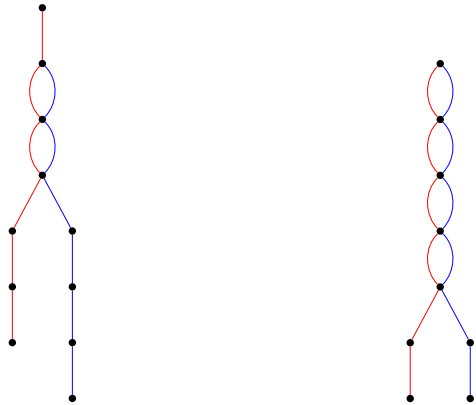

If the two instances of $\pi$ do not share their first vertex, then one begins at the root of the cluster and the other begins at some non-root vertex of the first instance. Let $v$ be the last vertex that the two instances share, and let $i$ be the label of $v$. Note that all labels of the second instance that are descendants of $v$ are at least $i$. It follows that the labels $1,\ldots,i$ all lie in the first instance, so if $v$ is the $j$th vertex in the first instance, then $\pi(j)=i$, and the height of $i$ in $\pi$ is at most $h$. The number of such clusters is then $\binom{2k-h-i}{k-h}$, as it suffices to determine the remaining $k-h$ labels in the second instance, where the possible values are in $\{i+1,\ldots,2k-h-i\}$. There are thus $\sum_{i\in S_h}\binom{2k-h-i}{k-h}$ such clusters.

If the two instances of $\pi$ share their first vertex, then they share their first $h$ vertices. For $h\le s$, the first $h$ labels must be $1,\ldots,h$. Then there are $\frac12\binom{2k-2h}{k-h}$ ways to determine the rest of the labels, as we must decide which $k-h$ labels are in the first instance and then divide by two since the order of the instances does not matter when they share their first vertex. When $h=s+1$, there are $\frac12\binom{2(\pi(s+1)-s-1)}{\pi(s+1)-s-1}\binom{2(k-\pi(s+1))}{k-\pi(s+1)}$ such clusters. Indeed, the labels of the first $s$ vertices are $1,\ldots,s$, and the label of the $(s+1)$st vertex $v$ must be $s+2(\pi(s+1)-s-1)+1$, since among the remaining vertices exactly $2(\pi(s+1)-s-1)$ have labels less than $v$. It the remains to decide which $\pi(s+1)-s-1$ of the elements of $\{s+1,\ldots,s+2(\pi(s+1)-s-1)\}$ and which $k-\pi(s+1)$ of the elements of $\{s+2(\pi(s+1)-s-1)+2,\ldots,2k-s-1\}$ are labels in the first instance, and then divide by $2$ due to the indistinguishability of the two instances.

Thus, $r_{2k-h,2}=\frac12\binom{2k-2h}{k-h}+\sum_{i\in S_h}\binom{2k-h-i}{k-h}$ for $h\le s$, and for $h=s+1$ we have that \[r_{2k-s-1,2}=\frac12\binom{2(\pi(i+1)-s-1)}{\pi(s+1)-s-1}\binom{2(k-\pi(s+1))}{k-\pi(s+1)}.\] Define the cluster numbers $r_{n,m}'$ and sets $S_h'$ for $\pi'$ analogously, and note that by Theorem 3.2, if $\pi$ and $\pi'$ are strong-c-forest-Wilf equivalent then $r_{n,m}=r_{n,m}'$ for all $m,n$. Suppose that the streak $s'$ of $\pi'$ is greater than $s$. We have that \[\frac12\binom{2(\pi(i+1)-s-1)}{\pi(s+1)-s-1}\binom{2(k-\pi(s+1))}{k-\pi(s+1)}=\frac12\binom{2k-2s-2}{k-s-1}+\sum_{i\in S_{s+1}'}\binom{2k-s-1-i}{k-s-1}\] since $r_{2k-s-1,2}=r_{2k-s-1,2}'$. But note that the right-hand side is strictly greater than the left-hand side. Indeed, the sequence $\binom{2a}a\binom{2n-2a}{n-a}$ is strictly decreasing for $a\le\frac n2$ as \[\frac{\binom{2a+2}{a+1}\binom{2n-2a-2}{n-a-1}}{\binom{2a}a\binom{2n-2a}{n-a}}=\frac{(2a+1)(n-a)}{(a+1)(2n-2a-1)}<1\] for $n>2a+1$. Then as $S_{i+1}'$ is nonempty, the sum on the right-hand side is positive while the binomial on the right-hand side is at least the binomial on the left-hand side. Hence, it cannot happen that $r_{2k-s-1,2}=r_{2k-s-1,2}'$ if $s\ne s'$, so $s=s'$, as desired.

We then consider the equalities $r_{2k-h,2}=r_{2k-h,2}'$ for all $h\le s$. They tell us that \[\frac12\binom{2k-2h}{k-h}+\sum_{i\in S_h}\binom{2k-h-i}{k-h}=\frac12\binom{2k-2h}{k-h}+\sum_{i\in S_h'}\binom{2k-h-i}{k-h},\] so $\sum_{i\in S_h}\binom{2k-h-i}{k-h}=\sum_{i\in S_h'}\binom{2k-h-i}{k-h}$. We claim that this means that $S_h=S_h'$. Indeed, the elements of $S_h$ and $S_h'$ are greater than $h$, and for $i=h+1,\ldots,k$ the value of $\binom{2k-h-i}{k-h}$ is among $\binom{2k-2h-1}{k-h},\ldots,\binom{k-h}{k-h}$. But each element in this sequence is more than twice the next element, as $\frac{\binom{k-h+i}{k-h}}{\binom{k-h+i-1}{k-h}}=\frac{k-h+i}i>2$ for $i<k-h$. It follows that $\sum_{i\in S}\binom{2k-h-i}{k-h}$ takes on different values for different $S\subseteq\{h+1,\ldots,k\}$, which means that $\sum_{i\in S_h}\binom{2k-h-i}{k-h}=\sum_{i\in S_h'}\binom{2k-h-i}{k-h}$ can only occur when $S_h=S_h'$. The sets of terms of height at most $h$ are equal for $\pi$ and $\pi'$ for all $h\le s$. It follows that the heights of all $i\in\{2,\ldots,k\}$ with respect to $\pi$ and $\pi'$ are equal, as desired.
\end{proof}

\begin{prop}
\label{weakcons}
If $\pi$ and $\pi'$ are grounded permutations of length $k$ that are strong-c-forest-Wilf equivalent with streak $s$, then $\pi(s+1)=\pi'(s+1)$ or $\pi(s+1)+\pi'(s+1)=s+k+1$. Furthermore, if $d$ is the least positive integer such that $\{\pi(d),\ldots,\pi(k)\}$ does not contain two consecutive numbers and if $d'$ is the least positive integer such that $\{\pi'(d'),\ldots,\pi'(k)\}$ does not contain two consecutive numbers, then $d=d'$.
\end{prop}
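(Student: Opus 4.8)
The plan is to follow the methodology of Theorem \ref{gp15} and Proposition \ref{sameh}: since $\pi\stackrel{sc}\sim\pi'$ forces the forest cluster numbers $r_{n,m}$ of $\pi$ and $\pi'$ to agree for all $n,m$ (Theorem \ref{gpfclust}), it suffices to read off from suitable $r_{n,m}$ explicit formulas in which the data $\pi(s+1)$, and then the index $d$, enter invertibly. By Proposition \ref{sameh} we may assume $\pi$ and $\pi'$ are grounded with the same streak $s$ and the same height sequence.

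For the first assertion I reuse the value of $r_{2k-s-1,2}$ from the proof of Proposition \ref{sameh}. A $2$-cluster of size $2k-s-1$ consists of two highlighted instances overlapping in exactly $s+1$ vertices, and groundedness (no increasing consecutive run of length $s+1$ except at the start of $\pi$) forces them to share their first $s+1$ vertices, whence
\[
r_{2k-s-1,2}=\frac12\binom{2a}{a}\binom{2b}{b},\qquad a=\pi(s+1)-s-1,\quad b=k-\pi(s+1),
\]
and likewise $r'_{2k-s-1,2}=\frac12\binom{2a'}{a'}\binom{2b'}{b'}$ with $a'=\pi'(s+1)-s-1$, $b'=k-\pi'(s+1)$; here $a+b=a'+b'=k-s-1=:N$, and $a,a'\ge1$ since $\pi(s+1),\pi'(s+1)\ge s+2$. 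Equating the two, and using (exactly as in the proof of Proposition \ref{sameh}) that $g(x):=\binom{2x}{x}\binom{2(N-x)}{N-x}$ satisfies $g(x)=g(N-x)$ together with $g(x+1)/g(x)=\frac{(2x+1)(N-x)}{(x+1)(2N-2x-1)}<1$ for $x<N/2$, so that $g(x)=g(y)$ for $x,y\in\{0,\dots,N\}$ forces $y\in\{x,N-x\}$, we conclude $\{a,b\}=\{a',b'\}$. This means $a=a'$, i.e.\ $\pi(s+1)=\pi'(s+1)$, or $a=b'$, i.e.\ $\pi(s+1)+\pi'(s+1)=s+k+1$.

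For the second assertion the aim is to locate a cluster number sensitive to consecutive \emph{values} in the tail of $\pi$, rather than to increasing runs. The starting observation is that $d-1$ is exactly the largest index $t$ for which $\pi(t)-1$ or $\pi(t)+1$ lies in $\{\pi(t+1),\dots,\pi(k)\}$, equivalently the largest $t$ with $\{\pi(t),\dots,\pi(k)\}$ still containing a consecutive pair. I would then consider clusters built from one instance emanating from the root together with a bounded number of further instances branching off it near a single vertex, arranged so that one branch vertex is pinned to carry the label of a value adjacent to that of a neighboring branch vertex; such a configuration can be realized ``at position $t$'' of the root instance precisely when $t\le d-1$. Organizing the enumeration so that, after stripping off the parts of $r_{n,m}$ already determined by $s$, by the height sequence, and by the first assertion, the remainder is a sum over $t\in\{1,\dots,d-1\}$ of binomial coefficients that are superincreasing (each more than twice the next, just as in the proofs of Proposition \ref{sameh} and Theorem \ref{gp15}), the value of this remainder then determines $d$; comparing with the analogous expression for $\pi'$ gives $d=d'$.

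The step I expect to be the main obstacle is the identification and clean enumeration of that last family of clusters. In contrast with the increasing-run conditions exploited for Proposition \ref{sameh}, forcing a cluster vertex to carry a prescribed value requires a carefully arranged pair of nearby instances, and the bookkeeping must exclude the numerous spurious labellings and overlap patterns so that the coefficient of the relevant binomial genuinely reduces to the indicator of $t\le d-1$. Once a formula of the displayed shape is available, extracting $d=d'$ is routine via the binomial-growth argument already used twice above.
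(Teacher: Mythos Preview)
Your argument for the first assertion is correct and is exactly the paper's argument: use the formula for $r_{2k-s-1,2}$ from the proof of Proposition \ref{sameh} and the symmetry/monotonicity of $a\mapsto\binom{2a}{a}\binom{2(N-a)}{N-a}$.

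For the second assertion, however, you have missed a much simpler route and are proposing machinery that is neither needed nor clearly workable. The paper does not introduce any new families of clusters; it simply continues to read off $r_{2k-h,2}$ for \emph{all} $h>s$. For such $h$ the two instances in a $2$-cluster of size $2k-h$ must share their first $h$ vertices (groundedness again), and the count is
\[
r_{2k-h,2}=\tfrac12\prod_j\binom{2a_j}{a_j},
\]
where $a_1,a_2,\ldots$ are the sizes of the maximal blocks of consecutive integers in $\{s+1,\ldots,k\}\setminus\{\pi(s+1),\ldots,\pi(h)\}=\{\pi(h+1),\ldots,\pi(k)\}$. In particular $r_{2k-h,2}=2^{k-h-1}$ precisely when $\{\pi(h+1),\ldots,\pi(k)\}$ contains no two consecutive integers, and the first time (as $h$ decreases) that this fails the value jumps to $3\cdot 2^{k-h-2}$ or more. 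The threshold index is therefore determined by the sequence $(r_{2k-h,2})_{h>s}$ alone, and since these numbers agree for $\pi$ and $\pi'$ one gets $d=d'$ immediately.

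So the gap in your proposal is not a subtle enumeration issue but a strategic one: you are trying to force value-adjacency by engineering branching clusters, whereas the information about consecutive values in the tail of $\pi$ is already sitting in the very same $2$-cluster counts you used for the first part, just at larger overlap $h$. Once you see that $\{\pi(h+1),\ldots,\pi(k)\}$ is exactly the set whose block structure controls $r_{2k-h,2}$, the second assertion is a one-line observation.
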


\begin{proof}
We will prove this proposition by considering the equation $r_{2k-h,2}=r_{2k-h,2}'$ for $h>s$. Note that in a $2$-cluster of size $2k-h$ for $h>s$, the two instances must share the first $h$ vertices. By the same arguments as in Proposition \ref{sameh}, if $\{s+1,\ldots,k\}\setminus\{\pi(s+1),\ldots,\pi(h)\}$ has consecutive blocks of size $a_1,\ldots,a_i$, then $r_{2k-h,2}=\frac12\binom{2a_1}{a_1}\cdots\binom{2a_i}{a_i}$. By taking $h=s+1$, we have that $a_1=\pi(i+1)-s-1$ and $a_2=k-\pi(i+1)$. If we define $a_1',\ldots,a_i'$ analogously for $\pi'$, we have that $a_1'=\pi'(i+1)-s-1$ and $a_2'=k-\pi'(i+1)$. As before, the sequence $\binom{2a}a\binom{2n-2a}{n-a}$ is strictly decreasing for $a\le\frac n2$, so $\frac12\binom{2a_1}{a_1}\binom{2a_2}{a_2}=\frac12\binom{2a_1'}{a_1'}\binom{2a_2'}{a_2'}$ and thus $\{a_1,a_2\}=\{a_1',a_2'\}$ since $a_1+a_2=a_1'+a_2'$. This implies that $\pi(s+1)=\pi'(s+1)$ or $\pi(s+1)+\pi'(s+1)=s+k+1$. Note that $r_{2k-h,2}=2^{k-h-1}$ for all $h\ge d$, while $r_{2k-d+1,2}=3\cdot2^{k-d-1}$. The same holds true for $r'$ and $d'$, which implies the second part of the proposition.
\end{proof}

\subsection{The triviality of super-strong equivalences}
\label{sstriv}
\hspace*{\fill} \\
In this subsection, we prove Theorem \ref{fssequ}. We follow the same methodology of counting linear extensions of cluster posets as in \cite{DE, LS}. We again consider two permutations $\pi=\pi(1)\cdots\pi(k)$ and $\pi'=\pi'(1)\cdots\pi'(k)$ of length $k$, assumed to satisfy $\pi\stackrel{ssc}\sim\pi'$. Let $m=\left\lfloor\frac k2\right\rfloor$.

For an unlabeled rooted forest $F$ on $n$ vertices with a set $S$ of highlighted paths of length $k$, let $a_{F,S}$ be the number of ways to label the vertices of $F$ with distinct labels in $[n]$ such that the instances of $\pi$ are precisely the highlighted paths in $S$. If $a_{F,S}'$ is defined analogously for $\pi'$, note that by definition, $\pi\stackrel{ssc}\sim\pi'$ if and only if $a_{F,S}=a_{F,S}'$ for all $F$ and $S$.

\begin{defn}
\label{unlabdef}
An \emph{unlabeled cluster} of order $k$ is a rooted tree with a set of highlighted paths of length $k$ such that every vertex belongs to a highlighted path and it is not possible to partition the vertex set into two parts such that no path contains vertices in both parts. In other words, an unlabeled cluster is simply a forest cluster without the labels. For each unlabeled cluster $C$ of order $k$, define $r_C$ and $r_C'$ to be the number of ways to label the vertices in $C$ that result in a forest cluster for $\pi$ and $\pi'$, respectively.
\end{defn}

\begin{defn}
\label{fcpdef}
Given an unlabeled cluster $C$ of order $k$, the \emph{forest cluster poset} $P_C$ is the poset on the vertex set of $C$ generated by the following relations: for each highlighted path $v_1,\ldots,v_k$, $v_{\pi^{-1}(1)}<\cdots<v_{\pi^{-1}(k)}$, where $\pi^{-1}$ is the inverse of $\pi$ in $S_k$. We define $P_C'$ analogously for $\pi'$.
\end{defn}

Note that $r_C$ and $r_C'$ are respectively the number of linear extensions of $P_C$ and $P_C'$ divided by the number of automorphisms of the underlying rooted tree of $C$.

We first prove the following relation between the refined forest cluster numbers $r_C$ and super-strong equivalence for forests. Our proof mimics the analogous result of Dwyer and Elizalde for permutations given in \cite{DE}.

\begin{lem}
\label{rfcnlem}
We have that $\pi\stackrel{ssc}\sim\pi'$ if and only if $r_C=r_C'$ for all unlabeled clusters $C$ of order $k$.
\end{lem}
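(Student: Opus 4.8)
The plan is to adapt the Dwyer--Elizalde argument (counting via inclusion--exclusion and a cluster decomposition) to the forest setting. For an unlabeled rooted forest $F$ on $n$ vertices with a set $S$ of highlighted length-$k$ paths, first define $b_{F,S}$ (resp.\ $b_{F,S}'$) to be the number of labelings of the vertices of $F$ with distinct labels in $[n]$ under which \emph{every} path in $S$ is an instance of $\pi$ (resp.\ $\pi'$), with no condition imposed on the remaining length-$k$ paths of $F$. For a fixed labeling, let $T$ be its set of $\pi$-instances among the length-$k$ paths of $F$; this labeling contributes to $b_{F,S}$ exactly for those $S$ with $S\subseteq T$, so $b_{F,S}=\sum_{S\subseteq S'} a_{F,S'}$, the sum over subsets $S'$ of the finite set of length-$k$ paths of $F$ containing $S$. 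M\"obius inversion over this Boolean lattice gives $a_{F,S}=\sum_{S\subseteq S'}(-1)^{|S'\setminus S|}b_{F,S'}$, hence $a_{F,S}=a_{F,S}'$ for all $(F,S)$ if and only if $b_{F,S}=b_{F,S}'$ for all $(F,S)$. It therefore suffices to show that this last condition is equivalent to $r_C=r_C'$ for all unlabeled clusters $C$ of order $k$.

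The second step is a decomposition of $(F,S)$. Call two paths of $S$ adjacent if they share a vertex, and take the transitive closure of this relation; it partitions $S$ into blocks $B_1,\dots,B_t$. For each block $B_i$, the union of its paths is a connected subgraph of $F$; being a subgraph of a forest it is a tree, with a unique highest vertex serving as its root, and together with the paths of $B_i$ it forms an unlabeled cluster $C_i$ of order $k$ in the sense of Definition~\ref{unlabdef} (every vertex lies on a highlighted path, and the connectivity condition holds since $B_i$ is a single adjacency class). Let $n_i=|C_i|$ and let $n_0=n-\sum_{i=1}^t n_i$ be the number of vertices of $F$ lying on no path of $S$. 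A labeling counted by $b_{F,S}$ amounts to choosing which $n_i$ labels are placed on $C_i$ for each $i$, arranging them inside $C_i$ so that each highlighted path of $C_i$ is a $\pi$-instance --- exactly $r_{C_i}$ ways, independent of the chosen $n_i$-element label set since only the relative order matters --- and arranging the remaining $n_0$ labels arbitrarily on the free vertices; the constraints on distinct clusters are independent because vertices of distinct $C_i$ never lie on a common highlighted path. Hence
\[
b_{F,S}=\binom{n}{n_1,\dots,n_t,n_0}\,n_0!\prod_{i=1}^{t}r_{C_i}=\frac{n!}{n_1!\cdots n_t!}\prod_{i=1}^{t}r_{C_i},
\]
and the same identity holds for $b_{F,S}'$ with each $r_{C_i}$ replaced by $r_{C_i}'$. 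Every factor other than $\prod_i r_{C_i}$ depends only on $(F,S)$ and not on $\pi$.

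It remains to conclude both implications. If $r_C=r_C'$ for every unlabeled cluster $C$ of order $k$, then the displayed formula yields $b_{F,S}=b_{F,S}'$ for all $(F,S)$, hence $a_{F,S}=a_{F,S}'$ for all $(F,S)$ by the first step, i.e.\ $\pi\stackrel{ssc}{\sim}\pi'$. Conversely, if $\pi\stackrel{ssc}{\sim}\pi'$ then $b_{F,S}=b_{F,S}'$ for all $(F,S)$; applying this with $F=C$ a single unlabeled cluster of order $k$ and $S$ its set of highlighted paths, the decomposition consists of the single block $B_1=S$ with $n_1=|C|$ and $n_0=0$, so the formula collapses to $b_{C,S}=r_C$, giving $r_C=b_{C,S}=b_{C,S}'=r_C'$.

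The main obstacle I expect is the bookkeeping in the decomposition step: verifying that the union of an adjacency class of downward paths in a rooted forest is genuinely a rooted tree (so that it is an admissible unlabeled cluster), that the labeling constraints from distinct blocks are truly independent, and that $r_{C_i}$ does not depend on which $n_i$-element label set is assigned to $C_i$. These are routine once set up carefully, whereas the M\"obius-inversion step and the final specialization to $F=C$ are immediate given the product formula.
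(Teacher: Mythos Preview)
The proposal is correct and follows essentially the same approach as the paper: reduce $a_{F,S}=a_{F,S}'$ to $b_{F,S}=b_{F,S}'$ by M\"obius inversion, then factor $b_{F,S}$ over the cluster decomposition of $(F,S)$. The paper phrases the factorization probabilistically as $\frac{b_{F,S}}{n!}=\prod_i\frac{r_{C_i}}{n_i!}$, which is exactly your formula $b_{F,S}=\frac{n!}{n_1!\cdots n_t!}\prod_i r_{C_i}$ after clearing denominators.
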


\begin{proof}
Let $b_{F,S}$ be the number of ways to label $F$ such that all paths in $S$ are instances of $\pi$, though not necessarily all instances of $\pi$ are in $S$, and define $b_{F,S}'$ analogously for $\pi'$. We have that $b_{F,S}=\sum_{S\subseteq T}a_{F,T}$ and $b_{F,S}'=\sum_{S\subseteq T}a_{F,T}'$. By the Principle of Inclusion-Exclusion, $a_{F,S}=a_{F,S}'$ for all $F$ and $S$ if and only if $b_{F,S}=b_{F,S}'$ for all $F$ and $S$, so $\pi\stackrel{ssc}\sim\pi'$ if and only if $b_{F,S}=b_{F,S}'$ for all $F$ and $S$.

It is clear that the equality $b_{F,S}=b_{F,S}'$ holding for all $F$ and $S$ implies $r_C=r_C'$ for all $C$, as $C$ is an unlabeled cluster where the underlying forest is a tree. In the other direction, consider the finest partition of the vertex set of $F$ such that any two adjacent vertices in a highlighted path are in the same part. In this way, we have partitioned the vertices into singleton sets and trees $T_1,\ldots,T_p$ with sets $S_1,\ldots,S_p$ of highlighted paths such that $T_i$ and $S_i$ form an unlabeled cluster $C_i$. If the number of vertices of $T_i$ is $n_i$ and the number of vertices of $F$ is $n$, then in a uniform random labelling of $F$, the probability that all highlighted paths are instances of $\pi$ is $\frac{b_{F,S}}{n!}$. On the other hand, it is also equal to $\prod_{i=1}^p\frac{r_{C_i}}{n_i!}$ since the events $E_i$ that the paths in $S_i$ are all instances of $\pi$ are independent. We thus have that $\frac{b_{F,S}}{n!}=\prod_{i=1}^p\frac{r_{C_i}}{n_i!}$ and analogously that $\frac{b_{F,S}'}{n!}=\prod_{i=1}^p\frac{r_{C_i}'}{n_i!}$, which proves the reverse direction, establishing the lemma.
\end{proof}

Recall that by Theorem \ref{gp15} and complementation, we may assume that $\pi(1)=\pi'(1)$ since $\pi\stackrel{ssc}\sim\pi'$. We first construct a certain family of clusters in Lemma \ref{sselem1} that prove that under this assumption, in most cases the two permutations must be the same, which would imply the theorem. The one caveat is that if $\pi(1)=\frac{k-1}2$, we are unable to distinguish between complements with only Lemma \ref{sselem1}, so Lemma \ref{sselem2} is also required to prove the result. In contrast to \cite{EN,LS}, our approach is not to show a contradiction of asymptotics but rather to use the polynomial identity theorem: two polynomials that agree on an infinite set must be equal. The extra flexibility of forests allows us to construct families of cluster posets whose linear extension counts are effectively given by polynomials. The key innovation in our proof is to rely on the algebraic properties of polynomials, such as their roots or the ability to cancel out common factors, that may not have an obvious combinatorial interpretation, instead of on how fast the linear extension counts grow.

\begin{lem}
\label{sselem1}
Suppose that $\pi(1)=\pi'(1)$ and $\pi\stackrel{ssc}\sim\pi'$. If it is not the case that $\pi(1)=m$ and $k=2m-1$, then $\pi=\pi'$. If $\pi(1)=m$ with $k=2m-1$, then $\pi(i)=\pi'(i)$ or $\pi(i)+\pi'(i)=k+1$ for all $i$.
\end{lem}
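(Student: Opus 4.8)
The plan is to use Lemma~\ref{rfcnlem} to reduce the super-strong equivalence to the equality $r_C = r_C'$ for every unlabeled cluster $C$ of order $k$, and then to construct a carefully chosen one-parameter family of unlabeled clusters whose linear-extension counts are (up to a fixed automorphism factor) polynomials in the parameter. Since $\pi \stackrel{ssc}\sim \pi'$ forces these polynomials to agree at infinitely many integer values, they are identically equal, and I will extract the desired constraints on $\pi$ and $\pi'$ by comparing leading terms, roots, or low-order coefficients. Throughout I assume $\pi(1)=\pi'(1)=:a$, which is legitimate by Theorem~\ref{gp15}.

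The core construction: start from a single highlighted path $P$ of length $k$ (a ``stem''), and at the vertex of $P$ occupying position $1$ in the pattern — i.e. the vertex that will receive the smallest label among the stem if it is to be an instance of $\pi$ — attach $N$ pendant highlighted paths, each overlapping the stem in exactly its first vertex (or, more generally, overlapping in a prefix forced by the streak, as in the analysis of Subsection~\ref{ground}). One then counts $r_C$ as the number of linear extensions of the forest cluster poset $P_C$ divided by the number of tree automorphisms, which here is $N!$ since the $N$ attached branches are interchangeable. The linear-extension count of $P_C$ is a product over the branches of binomial-type factors recording how many of the remaining labels land in each branch, and crucially the first element $a$ and the structure near the overlap vertex determine which labels are ``small'' and which are ``large.'' Writing $n = k + N(k-1)$ vertices, the count $r_C$ becomes, after dividing by $N!$, a polynomial in $N$ whose degree is governed by $a$ and whose leading coefficient encodes $\prod_j \binom{2a_j}{a_j}$-style data for the consecutive blocks of $\{\pi(1),\ldots,\pi(k)\}$ — exactly the same flavor of invariants appearing in Propositions~\ref{sameh} and~\ref{weakcons}, but now with the extra rigidity that the \emph{entire} polynomial, not just finitely many cluster numbers, must match.

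The key steps, in order: (1)~apply Lemma~\ref{rfcnlem} to pass to the refined cluster numbers $r_C$; (2)~for each position $i$, build a family $\{C_N\}_{N\ge 0}$ of clusters in which a ``test branch'' records the value $\pi(i)$ versus $\pi'(i)$ — concretely, attach branches at the stem vertex in position $i-1$ (or $i$) so that the number of labels forced below/above the position-$i$ vertex differs between $\pi$ and $\pi'$ unless $\pi(i)=\pi'(i)$; (3)~express $r_{C_N}$ as a polynomial $p_i(N)$ and $r'_{C_N}$ as $p'_i(N)$, observe $p_i = p'_i$ by the polynomial identity theorem, and read off that $\pi(i)$ and $\pi'(i)$ are either equal or, when the combinatorics is symmetric under label-complementation of the branch, related by $\pi(i)+\pi'(i)=k+1$; (4)~check that this complement ambiguity at position $i$ can only persist simultaneously for all $i$ when $a = \pi(1) = m$ and $k = 2m-1$ — in every other case some position pins down an actual inequality $\pi(i) < \pi'(i)$ or $\pi(i) > \pi'(i)$ that is incompatible with the matching of the other coordinates once $\pi(1)=\pi'(1)$ is fixed, forcing $\pi = \pi'$; and (5)~in the exceptional case $\pi(1)=m$, $k=2m-1$, conclude the weaker statement $\pi(i)=\pi'(i)$ or $\pi(i)+\pi'(i)=k+1$, which is all that is claimed, leaving the final disambiguation to Lemma~\ref{sselem2}.

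The main obstacle will be step~(3)--(4): making the polynomial dependence on $N$ genuinely clean. A naive cluster family will have linear-extension counts that are products of several binomial coefficients in $N$, and I must choose the attachment points so that all but one factor is \emph{constant} in $N$ (independent of the parameter) while the single varying factor isolates the coordinate $\pi(i)$ — otherwise the resulting polynomial identity mixes information from several positions and cannot be inverted. This is exactly where the flexibility of forests over permutations is essential: I can hang the $N$ branches at a single, chosen vertex of the stem rather than being forced into the linear rigidity of permutation clusters, so that the ``rest'' of each branch contributes a fixed multiplicative constant. Verifying that the degree of $p_i(N)$ equals (say) the number of stem-labels forced below position $i$, and that its leading coefficient is a nonzero product of central binomial coefficients, is the technical heart; once that is in place, comparing degrees gives one linear equation in $\pi(i),\pi'(i)$ and comparing leading coefficients (a product of $\binom{2a_j}{a_j}$ terms, strictly monotone as in the displayed computation $\frac{(2a+1)(n-a)}{(a+1)(2n-2a-1)}<1$ from Proposition~\ref{sameh}) forces the block structure to match, yielding $\pi(i)=\pi'(i)$ up to the unavoidable complement symmetry.
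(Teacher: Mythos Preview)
Your overall framework is exactly the paper's: for each position $i>1$, take a stem instance and attach $n$ further instances all starting at the $i$th stem vertex, so that the cluster poset is $n+1$ chains of length $k$ through a common point $v$; then $r_{C_n}=r'_{C_n}$ becomes a polynomial identity in $n$. But your planned extraction in steps (3)--(4) has real gaps.

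Concretely, with $q=\pi(1)=\pi'(1)$ and $p=\pi(i)$, $p'=\pi'(i)$, the poset $P_{C_n}$ has $v$ as the $p$th element of one chain and the $q$th element of the other $n$; after cancelling factors common to both sides the identity to analyze is
\[
\binom{(q-1)x+p-1}{p-1}\binom{(k-q)x+k-p}{k-p}\;=\;\binom{(q-1)x+p'-1}{p'-1}\binom{(k-q)x+k-p'}{k-p'}
\]
as polynomials in $x$. For $1<q<k$ the degree of each side is $(p-1)+(k-p)=k-1$, \emph{independent of $p$}, so your plan to read off $\pi(i)$ from the degree fails. The leading coefficient is $\frac{(q-1)^{p-1}(k-q)^{k-p}}{(p-1)!(k-p)!}$, which has nothing to do with products of central binomials $\binom{2a_j}{a_j}$; that mechanism belongs to the $2$-cluster analysis of Subsection~\ref{ground} and is a red herring here. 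What the paper does instead is compare the multisets of \emph{roots}: they are $\{-j/(q-1):1\le j\le p-1\}\sqcup\{-j/(k-q):1\le j\le k-p\}$, two arithmetic progressions with common differences $1/(q-1)$ and $1/(k-q)$. When $q-1\ne k-q$ these progressions cannot be traded against each other, forcing $p=p'$; when $q-1=k-q$ (equivalently $\pi(1)=m$, $k=2m-1$) the multiset depends only on $\{p-1,k-p\}$, giving $p=p'$ or $p+p'=k+1$.

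Finally, your step (4) misreads the logic. There is no ``simultaneous persistence'' issue: the dichotomy is governed entirely by whether $q-1=k-q$, and for each $i$ separately the root comparison already yields $\pi(i)=\pi'(i)$ outright in the asymmetric case. No cross-position coherence argument is needed.
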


\begin{proof}
Fix an $i>1$ and let $q=\pi(1)$. Consider the unlabeled cluster $C_n$ of order $k$ defined as follows: we have one instance beginning at the root of $C_n$, and there are $n$ disjoint instances starting from the $i$th vertex of this instance. Thus, the intersection of any pair of instances consists only of the $i$th vertex of the first instance. Suppose that $\pi(i)=p$ and $\pi'(i)=p'$. We now describe the cluster poset $P_{C_n}$. It consists of $n+1$ chains with one common element $v$. In one of these chains, $v$ is the $p$th smallest element, and in the rest of the chains, $v$ is the $q$th smallest element. This poset has $\binom{(q-1)n+p-1}{q-1,\ldots,q-1,p-1}\binom{(k-q)n+k-p}{k-q,\ldots,k-q,k-p}$ linear extensions, as the label of $v$ is determined and the poset less than $v$ and the poset greater than $v$ all consist of $n$ disjoint chains. Similarly, $P_{C_n}'$ has $\binom{(q-1)n+p'-1}{q-1,\ldots,q-1,p'-1}\binom{(k-q)n+k-p'}{k-q,\ldots,k-q,k-p'}$ linear extensions. Figure \ref{ssefig1} gives an example of an unlabeled cluster $C_n$ and its cluster poset $P_{C_n}$.

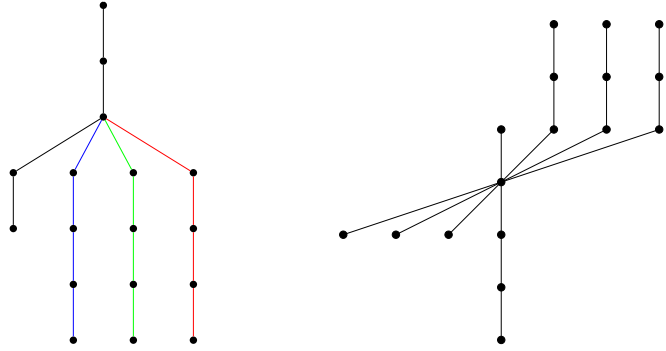
\begin{figure}[ht]
    \centering
    \forestset{filled circle/.style={
      circle,
      text width=4pt,
      fill,
    },}
    \scalebox{0.7}{
    \begin{minipage}[b]{0.45\linewidth}
    \centering
    \begin{forest}
    for tree={filled circle, inner sep = 0pt, outer sep = 0 pt, s sep = 1 cm}
    [, fill=white
        [, edge=white
            [,
                [,
                    [,
                        [,
                        ]
                    ]
                    [, edge=blue
                        [, edge=blue
                            [, edge=blue
                                [, edge=blue
                                ]
                            ]
                        ]
                    ]
                    [, edge=green
                        [, edge=green
                            [, edge=green
                                [, edge=green
                                ]
                            ]
                        ]
                    ]
                    [, edge=red
                        [, edge=red
                            [, edge=red
                                [, edge=red
                                ]
                            ]
                        ]
                    ]
                ]
            ]
        ]
    ]
    \end{forest}
    \end{minipage}
    \begin{minipage}[b]{0.45\linewidth}
    \centering
    \begin{tikzpicture}
    \draw[fill] (0,0) circle(2pt);
    \draw[fill] (0,1) circle(2pt);
    \draw[fill] (0,2) circle(2pt);
    \draw[fill] (0,3) circle(2pt);
    \draw[fill] (0,4) circle(2pt);
    \draw[fill] (-3,2) circle(2pt);
    \draw[fill] (-2,2) circle(2pt);
    \draw[fill] (-1,2) circle(2pt);
    \draw[fill] (1,4) circle(2pt);
    \draw[fill] (2,4) circle(2pt);
    \draw[fill] (3,4) circle(2pt);
    \draw[fill] (1,5) circle(2pt);
    \draw[fill] (2,5) circle(2pt);
    \draw[fill] (3,5) circle(2pt);
    \draw[fill] (1,6) circle(2pt);
    \draw[fill] (2,6) circle(2pt);
    \draw[fill] (3,6) circle(2pt);
    \draw (0,0)--(0,1)--(0,2)--(0,3)--(0,4);
    \draw (-1,2)--(0,3)--(1,4)--(1,5)--(1,6);
    \draw (-2,2)--(0,3)--(2,4)--(2,5)--(2,6);
    \draw (-3,2)--(0,3)--(3,4)--(3,5)--(3,6);
    \end{tikzpicture}
    \end{minipage}}
    \caption{The unlabeled cluster $C_n$ for a pattern $\pi$ of length $5$ and the Hasse diagram for the cluster poset $P_{C_n}$ when $n=3$, $i=3$, $\pi(1)=2$, and $\pi(3)=4$.}
    \label{ssefig1}
\end{figure}

If $\pi$ and $\pi'$ are super-strongly equivalent, then $P_{C_n}$ and $P_{C_n}'$ have the same number of linear extensions so $\binom{(q-1)n+p-1}{q-1,\ldots,q-1,p-1}\binom{(k-q)n+k-p}{k-q,\ldots,k-q,k-p}=\binom{(q-1)n+p'-1}{q-1,\ldots,q-1,p'-1}\binom{(k-q)n+k-p'}{k-q,\ldots,k-q,k-p'}$ for all $n$. This is equivalent to \[\binom{(q-1)n+p-1}{p-1}\binom{(k-q)n+k-p}{k-p}=\binom{(q-1)n+p'-1}{p'-1}\binom{(k-q)n+k-p'}{k-p'}\] holding for all $n$. This holds as a polynomial identity for all $n$, so it follows that \[\binom{(q-1)x+p-1}{p-1}\binom{(k-q)x+k-p}{k-p}=\binom{(q-1)x+p'-1}{p'-1}\binom{(k-q)x+k-p'}{k-p'}\] as polynomials. The roots of the left-hand side and right-hand side then must be the same. It is easy to see that the roots of the left-hand side are $-\frac1{q-1},\ldots,-\frac{p-1}{q-1},-\frac1{k-q},\ldots,-\frac{k-p}{k-q}$ and the roots of the right-hand side are $-\frac1{q-1},\ldots,-\frac{p'-1}{q-1},-\frac1{k-q},\ldots,-\frac{k-p'}{k-q}$. If $q-1\ne k-q$, then we must have that $p=p'$, corresponding to when it is not the case that $\pi(1)=m$ and $k=2m-1$. Otherwise, we must have that $p=p'$ or $p+p'=k+1$, so the lemma follows.
\end{proof}

Note that when $\pi(1)=\frac{k+1}2$, the family of cluster posets we construct in the proof of Lemma \ref{sselem1} cannot distinguish between the two cases $\pi(i)=\pi'(i)$ or $\pi(i)+\pi'(i)=k+1$ because if we were to construct the same family for $\overline\pi$, then we would get an isomorphism between the corresponding cluster posets after reversing the relative order. Any proof of Theorem \ref{fssequ} must take into account the case that $\pi'=\overline\pi$. When $\pi(1)\ne\frac{k+1}2$, this is taken care of by the complementation we use at the beginning to assume $\pi(1)=\pi'(1)$, but this does not quite work when $\pi(1)=\frac{k+1}2$. Lemma \ref{sselem1} shows that each individual term $\pi'(i)$ of $\pi'$ is as we expect (equal to $\pi(i)$ or $k+1-\pi(i)$) but cannot show that the choice between $\pi(i)$ and $k+1-\pi(i)$ is uniform over all $i$. To fix this, we add another ``branch'' to the family of clusters we constructed to introduce asymmetry between $\pi$ and $\overline\pi$. By Lemma \ref{sselem1}, we can take complements to fix at least one other term of the pattern between $\pi$ and $\pi'$. We make a specific choice for the term fixed and modify our family of clusters to show that when $\pi\ne\pi'$ the cluster posets do not have the same number of linear extensions for every cluster. The proof of Lemma \ref{sselem2} uses a few somewhat tricky algebraic manipulations. It would be interesting to see a more combinatorial proof for Lemma \ref{sselem1} and especially Lemma \ref{sselem2}.

\begin{lem}
\label{sselem2}
If $\pi(1)=\pi'(1)=m$ and $\pi\stackrel{ssc}\sim\pi'$ with $k=2m-1$ and $\pi(\ell)=\pi'(\ell)$ for some $\ell>1$, then $\pi=\pi'$.
\end{lem}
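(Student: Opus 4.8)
The plan is to re‑run the linear‑extension argument of Lemma~\ref{sselem1} on a larger family of clusters, obtained from the one used there by grafting one additional instance at the position $\ell$ where $\pi$ and $\pi'$ are known to coincide. The point of this ``anchor'' branch is that it records the position of the probed vertex $v_i$ relative to the \emph{fixed} position of the anchor vertex $v_\ell$, and this destroys the involution $\pi(i)\leftrightarrow k+1-\pi(i)$ on the cluster‑poset statistics that prevented Lemma~\ref{sselem1} from pinning down each coordinate in the degenerate case $\pi(1)=m=\tfrac{k+1}{2}$.

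First I would set up the contradiction. By Lemma~\ref{sselem1} we have $\pi'(j)\in\{\pi(j),\,k+1-\pi(j)\}$ for every $j$; assuming $\pi\neq\pi'$, fix an index $i$ with $\pi'(i)=k+1-\pi(i)\neq\pi(i)$. Since $\pi(1)=\pi'(1)=m$ and since $\ell>1$ together with $\pi(1)=m$ forces $\pi(\ell)\neq m$, necessarily $i\notin\{1,\ell\}$. Write $q=\pi(1)=m$ (so the degeneracy $q-1=k-q$ holds), $p=\pi(i)$, $p'=\pi'(i)=k+1-p$, and $c=\pi(\ell)=\pi'(\ell)$, noting $p\neq m$ and $p\neq c$. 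For each $n\geq 0$ let $C_n$ be the unlabeled cluster of order $k$ consisting of a spine instance $v_1\cdots v_k$ rooted at $v_1$, together with $n$ pairwise disjoint further instances each glued to the spine by identifying its first vertex with $v_i$, plus one more instance glued by identifying its first vertex with $v_\ell$. This is exactly the family of Lemma~\ref{sselem1} with one extra anchor branch at $v_\ell$. Since $\operatorname{Aut}(C_n)$ depends only on the unlabeled shape, Lemma~\ref{rfcnlem} gives that $\pi\stackrel{ssc}\sim\pi'$ forces $r_{C_n}=r_{C_n}'$, i.e.\ $P_{C_n}$ and $P_{C_n}'$ have equally many linear extensions, for every $n$.

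Next I would compute this number by decomposing $P_{C_n}$ around $v_i$ and $v_\ell$. The $n$ identical branches at $v_i$ contribute a multinomial factor that is independent of $p$ and common to $\pi$ and $\pi'$, and the ``$p$‑part'' coming from $v_i$ alone reproduces exactly the two binomial factors of Lemma~\ref{sselem1}; because $q-1=k-q=m-1$ these two factors are interchanged by $p\mapsto k+1-p$, so they cancel between the two sides of the identity $r_{C_n}=r_{C_n}'$. What survives is the factor contributed by the anchor branch at $v_\ell$: since $v_\ell$ has fixed rank $c$ on the spine while $v_i$ has rank $p$, this factor involves the length $|p-c|$ of the spine segment between $v_i$ and $v_\ell$ together with the two half‑chains of size $m-1$ hanging off $v_\ell$, and is therefore \emph{not} invariant under $p\mapsto k+1-p$. (One must allow for the two combinatorial sub‑cases according to whether $v_\ell$ lies above or below $v_i$, which can differ for $\pi$ and $\pi'$ since $\pi(i)\neq\pi'(i)$; in each case the $p$‑binomials still cancel.) Each side of the resulting identity is a polynomial in $n$, so by the polynomial identity theorem the surviving anchor factors for $\pi$ and for $\pi'$ must be equal as polynomials; since (within each sub‑case) they are binomial coefficients monotonic in $|p-c|$ resp.\ $|p'-c|$, equality forces $|p-c|=|p'-c|$, hence either $p=p'$ — impossible as $p\neq k+1-p$ because $p\neq m$ — or $p+p'=2c$, i.e.\ $c=\tfrac{k+1}{2}=m$ — impossible as $c=\pi(\ell)$ with $\ell>1$ and $\pi(1)=m$. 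Hence $\pi=\pi'$.

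The main obstacle is the explicit linear‑extension count of $P_{C_n}$. This poset is not series‑parallel: the lower half of the anchor branch is incomparable to $v_i$, and the upper halves of the $n$ branches at $v_i$ are incomparable to $v_\ell$, so $P_{C_n}$ contains ``N'' configurations and no hook‑length formula applies directly. Carrying out the decomposition cleanly therefore requires summing over the placements of these ``floating'' subchains relative to $v_i$ and $v_\ell$ and checking that the sums telescope, so that the count really does factor as [a part common to $\pi$ and $\pi'$]$\,\times\,$[the symmetric $p$‑binomials of Lemma~\ref{sselem1}]$\,\times\,$[an anchor factor that breaks the $p\leftrightarrow k+1-p$ symmetry]. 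This bookkeeping, and the somewhat tricky algebraic manipulations needed to see the telescoping and to rule out the exceptional possibility $c=m$, is the real content of the proof; once the factorization is in hand the symmetry‑breaking conclusion above is immediate.
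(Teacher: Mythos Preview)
Your high-level strategy---graft one extra ``anchor'' instance onto the family of Lemma~\ref{sselem1} to break the $p\leftrightarrow k+1-p$ symmetry---is exactly right, and is what the paper does. But two specific design choices in the paper are what make the computation go through, and your proposal reverses both of them. First, the paper does not use the given index~$\ell$ directly. Instead it takes $i$ with $\pi(i)=1$, uses Lemma~\ref{sselem1} to get $\pi'(i)\in\{1,k\}$, and complements if necessary so that $\pi'(i)=1$; the hypothesis $\pi(\ell)=\pi'(\ell)$ is only invoked at the very end to rule out $\pi'=\overline\pi$. Second, and crucially, the paper puts the $n$ identical branches at this shared position~$i$ (where the spine value is the \emph{minimum}) and puts the single anchor at the position~$j$ being tested---the opposite of your arrangement. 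Because the $n$-branch vertex~$u$ is the bottom of the spine, the $n$ lower half-chains sit below everything else and the $n$ upper half-chains sit entirely above~$u$, so they contribute a clean multinomial factor~$N^2$ that is identical for $\pi$ and~$\pi'$. What remains is a single sum over one parameter~$s$ (how many anchor elements fall below~$v$), and the paper finishes with an algebraic trick: divide the resulting polynomial identity by $x+1$ and evaluate at $x=-1$, which kills every term except $s=0$ and reduces the question to a single binomial identity that visibly fails.

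In your arrangement the $n$-branch vertex~$v_i$ has spine rank~$p$, not~$1$, so the lower halves of the $n$ branches are incomparable to~$v_\ell$ and the upper halves of the anchor are incomparable to~$v_i$; the poset genuinely does not decompose, and the factorization \[[\text{common part}]\times[\text{symmetric $p$-binomials of Lemma~\ref{sselem1}}]\times[\text{anchor factor}]\] that you assert is not available---the Lemma~\ref{sselem1} binomials count linear extensions of the poset \emph{without} the anchor, and once the anchor is present the $p$-dependence no longer separates out. You acknowledge this as ``the main obstacle'' and say the sums should ``telescope,'' but no mechanism for this is given, and the further claim that the surviving anchor factor is a binomial coefficient monotone in $|p-c|$ is likewise unsupported. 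So the proposal is a plan whose decisive step is missing; the paper's choice of anchoring at the minimum is not cosmetic but is precisely what collapses the computation to something tractable.
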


\begin{proof}
Let $i$ be such that $\pi(i)=1$. By Lemma \ref{sselem1}, we know that $\pi'(i)=1$ or $\pi'(i)=k$. By complementation, we may without loss of generality suppose that $\pi'(i)=1$. Suppose that $\pi(j)=a$. We will show that it cannot be the case that $\pi'(j)=k+1-a$, so $\pi'(j)=a$. Then it will follow that $\pi$ and $\pi'$ must be either equal or complements, so if $\pi$ and $\pi'$ agree at two indices then they are equal. We already know this to be the case for $\pi(j)=1,m,k$ by assumption, so we only need to consider $a\ne1,m,k$.

Consider the unlabeled cluster $C_n$ of order $k$ defined as follows: we have one instance $I$ beginning at the root of $C$, $n$ instances $J_1,\ldots,J_n$ beginning at the $i$th vertex of $I$ and sharing no other vertices with $I$ or each other, and one instance $K$ beginning at the $j$th vertex of $I$ and sharing no other vertices with $I,J_1,\ldots,J_n$. We now describe the cluster posets $P_{C_n}$ and $P_{C_n}'$, assuming that $\pi'(j)=k+1-a$. Both cluster posets consist of $n+2$ chains of length $k$, corresponding to the $n+2$ instances in $C$. Let the \emph{central chain} be the chain corresponding to $I$ with lowest element $u$, the \emph{lower chains} be the chains corresponding to $J_1,\ldots,J_n$, and the \emph{upper chain} be the chain corresponding to $K$, which intersects the central chain at the element $v$. Note that $u$ is the common median and only pairwise common element of the lower chains and that $v$ is the median of the upper chain as $\pi(1)=m$. In $P_{C_n}$, $v$ is the $a$th smallest element of the central chain and in $P_{C_n}'$, $v$ is the $a$th largest element of the central chain. Figure \ref{ssefig2} gives an example of an unlabeled cluster $C_n$ and its cluster poset $P_{C_n}$.

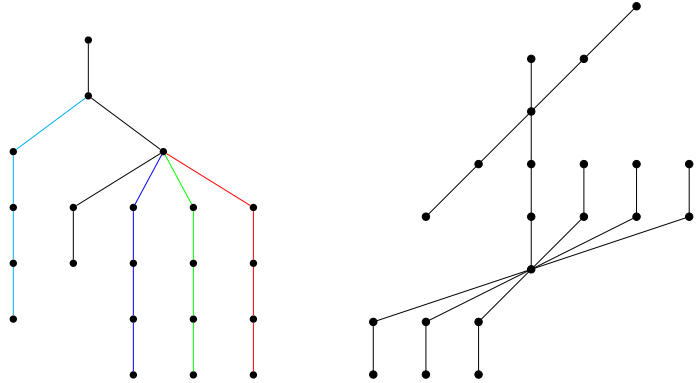
\begin{figure}[ht]
    \centering
    \forestset{filled circle/.style={
      circle,
      text width=4pt,
      fill,
    },}
    \scalebox{0.7}{
    \begin{minipage}[b]{0.45\linewidth}
    \centering
    \begin{forest}
    for tree={filled circle, inner sep = 0pt, outer sep = 0 pt, s sep = 1 cm}
    [, fill=white
        [, edge=white
            [,
                [, edge=cyan
                    [, edge=cyan
                        [, edge=cyan
                            [, edge=cyan
                            ]
                        ]
                    ]
                ]
                [,
                    [,
                        [,
                        ]
                    ]
                    [, edge=blue
                        [, edge=blue
                            [, edge=blue
                                [, edge=blue
                                ]
                            ]
                        ]
                    ]
                    [, edge=green
                        [, edge=green
                            [, edge=green
                                [, edge=green
                                ]
                            ]
                        ]
                    ]
                    [, edge=red
                        [, edge=red
                            [, edge=red
                                [, edge=red
                                ]
                            ]
                        ]
                    ]
                ]
            ]
        ]
    ]
    \end{forest}
    \end{minipage}
    \begin{minipage}[b]{0.45\linewidth}
    \centering
    \begin{tikzpicture}
    \draw[fill] (0,0) circle(2pt);
    \draw[fill] (0,1) circle(2pt);
    \draw[fill] (0,2) circle(2pt);
    \draw[fill] (0,3) circle(2pt);
    \draw[fill] (0,4) circle(2pt);
    \draw[fill] (-1,-1) circle(2pt);
    \draw[fill] (-2,-1) circle(2pt);
    \draw[fill] (-3,-1) circle(2pt);
    \draw[fill] (-1,-2) circle(2pt);
    \draw[fill] (-2,-2) circle(2pt);
    \draw[fill] (-3,-2) circle(2pt);
    \draw[fill] (1,1) circle(2pt);
    \draw[fill] (2,1) circle(2pt);
    \draw[fill] (3,1) circle(2pt);
    \draw[fill] (1,2) circle(2pt);
    \draw[fill] (2,2) circle(2pt);
    \draw[fill] (3,2) circle(2pt);
    \draw[fill] (-2,1) circle(2pt);
    \draw[fill] (-1,2) circle(2pt);
    \draw[fill] (1,4) circle(2pt);
    \draw[fill] (2,5) circle(2pt);
    \draw (0,0)--(0,1)--(0,2)--(0,3)--(0,4);
    \draw (-1,-2)--(-1,-1)--(0,0)--(1,1)--(1,2);
    \draw (-2,-2)--(-2,-1)--(0,0)--(2,1)--(2,2);
    \draw (-3,-2)--(-3,-1)--(0,0)--(3,1)--(3,2);
    \draw (-2,1)--(-1,2)--(0,3)--(1,4)--(2,5);
    \end{tikzpicture}
    \end{minipage}}    
    \caption{The unlabeled cluster $C_n$ for a pattern $\pi$ of length $5$ and the Hasse diagram for the cluster poset $P_{C_n}$ when $n=3$, $i=3$, $j=2$, and $a=4$.}
    \label{ssefig2}
\end{figure}

Now, we will enumerate linear extensions of $P_{C_n}$ and $P_{C_n}'$. We proceed using casework on the number $s$ of elements of the upper chain with label less than the label of $v$. Here, we must have $s<m$ as there are only $m-1$ elements of the upper chain that are incomparable to $v$ and all other elements are greater than $v$. Note that the label of $u$ must be $(m-1)n+s+1$. There are then $\binom{(m-1)n+s}s$ ways to choose which of the numbers in $[(m-1)n+s]$ are labels of elements in the upper chain. We next choose the labels of the remaining elements not in one of the lower chains. Note that there are $4(m-1)-s$ such elements, and the labels can be anything in $\{(m-1)n+s+2,\ldots,2(m-1)n+4(m-1)+1\}$, resulting in $\binom{(m-1)n+4(m-1)-s}{4(m-1)-s}$ ways. We then assign the specific labels to the elements not in the lower chains. The $s$ smallest elements of the upper chain already have their labels determined. The remaining elements form a poset consisting of two chains that intersect at $v$. One of these chains has $m-1$ elements greater than $v$ and $m-s-1$ elements less than $v$. For $P_{C_n}$ the other chain has $2m-a-1$ elements greater than $v$ and $a-2$ elements less than $v$, and for $P_{C_n}'$ the other chain has $a-1$ elements greater than $v$ and $2m-a-2$ elements less than $v$. Thus, there are $\binom{3m-a-2}{m-1}\binom{m-s+a-3}{m-s-1}$ ways to do so for $P_{C_n}$ and $\binom{m+a-2}{m-1}\binom{3m-s-a-3}{m-s-1}$ ways to do so for $P_{C_n}'$. Finally, to decide the labels of the elements in the lower chains, note that there are $N=\binom{(m-1)n}{m-1,\ldots,m-1}$ ways to choose the labels of the elements smaller than $u$ as those just consist of $n$ disjoint chains of $m-1$ elements. Similarly, there are $N$ ways to choose the labels of the elements larger than $u$. Thus, the number of linear extensions of $P_{C_n}$ and $P_{C_n}'$ are \[N^2\sum_{s=0}^{m-1}\binom{(m-1)n+s}s\binom{(m-1)n+4(m-1)-s}{4(m-1)-s}\binom{3m-a-2}{m-1}\binom{m-s+a-3}{m-s-1}\] and \[N^2\sum_{s=0}^{m-1}\binom{(m-1)n+s}s\binom{(m-1)n+4(m-1)-s}{4(m-1)-s}\binom{m+a-2}{m-1}\binom{3m-s-a-3}{m-s-1},\] respectively.

To have $\pi$ super-strongly equivalent to $\pi$, these two quantities must be equal for all $n$. Dividing both sides by $N^2$, we have by Lemma \ref{rfcnlem} that \[\sum_{s=0}^{m-1}\binom{(m-1)n+s}s\binom{(m-1)n+4(m-1)-s}{4(m-1)-s}\binom{3m-a-2}{m-1}\binom{m-s+a-3}{m-s-1}\]\[=\sum_{s=0}^{m-1}\binom{(m-1)n+s}s\binom{(m-1)n+4(m-1)-s}{4(m-1)-s}\binom{m+a-2}{m-1}\binom{3m-s-a-3}{m-s-1}\] for all $n$. Note that both sides are in fact polynomials in $n$. Thus, it follows by substituting $x=(m-1)n$ that \[\sum_{s=0}^{m-1}\binom{x+s}s\binom{x+4(m-1)-s}{4(m-1)-s}\binom{3m-a-2}{m-1}\binom{m-s+a-3}{m-s-1}\]\[=\sum_{s=0}^{m-1}\binom{x+s}s\binom{x+4(m-1)-s}{4(m-1)-s}\binom{m+a-2}{m-1}\binom{3m-s-a-3}{m-s-1}\] is a polynomial identity in $x$. Dividing both sides by $x+1$ gives \[\sum_{s=0}^{m-1}\binom{x+s}s\frac{\binom{x+4(m-1)-s}{4(m-1)-s}}{x+1}\binom{3m-a-2}{m-1}\binom{m-s+a-3}{m-s-1}\]\[=\sum_{s=0}^{m-1}\binom{x+s}s\frac{\binom{x+4(m-1)-s}{4(m-1)-s}}{x+1}\binom{m+a-2}{m-1}\binom{3m-s-a-3}{m-s-1}\] where $\frac1{x+1}\binom{x+4(m-1)-s}{4(m-1)-s}=\frac1{4(m-1)-s}\binom{x+4(m-1)-s}{4(m-1)-s-1}$ is a polynomial that does not vanish at $x=-1$. Plugging in $x=-1$, we note that $\binom{s-1}s=0$ for all $s>0$ while $\binom{-1}0=1$. Hence, all terms from $s>0$ will vanish, leaving the identity \[\frac1{4(m-1)}\binom{3m-a-2}{m-1}\binom{m+a-3}{m-1}=\frac1{4(m-1)}\binom{m+a-2}{m-1}\binom{3m-a-3}{m-1}.\] This is equivalent to $\frac{3m-a-2}{2m-a-1}=\frac{m+a-2}{a-1}$, which cannot happen unless $a=m$, a contradiction. Thus, we cannot have that $\pi'(j)=k+1-a$, and the lemma is proven.
\end{proof}

The combination of Lemmas \ref{sselem1} and \ref{sselem2} proves Theorem \ref{fssequ}. By examining our proof closely, we also have the following corollary.

\begin{cor}
\label{clusbij}
If patterns $\pi$ and $\pi'$ satisfy $\pi\stackrel{sc}\sim\pi'$ nontrivially, then by Theorem \ref{gpfclust}, for all $m$ and $n$ the number of $m$-clusters of size $n$ with respect to $\pi$ is equal to the number of $m$-clusters of size $n$ with respect to $\pi'$. No bijection between clusters with respect to $\pi$ and clusters with respect to $\pi'$ can be structure-preserving. By this, we mean that it cannot be the case that such a bijection always maps a cluster $C$ with respect to $\pi$ to a cluster $C'$ with respect to $\pi'$ such that the underlying unlabeled rooted trees of $C$ and $C'$ are isomorphic.
\end{cor}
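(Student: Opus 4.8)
The plan is to argue by contraposition: assume simultaneously that $\pi\stackrel{sc}\sim\pi'$ holds nontrivially and that a structure-preserving bijection $\Phi$ between forest clusters for $\pi$ and forest clusters for $\pi'$ exists, and deduce $\pi=\pi'$ or $\pi'=\overline\pi$, contradicting nontriviality. The first observation is purely formal: since $\Phi$ preserves the underlying unlabeled rooted tree, for each fixed unlabeled rooted tree $T$ it restricts to a bijection between the clusters for $\pi$ with underlying tree $T$ and the clusters for $\pi'$ with underlying tree $T$; in particular these two sets have equal size for every $T$. Because $\pi\stackrel{sc}\sim\pi'$, Theorem \ref{gp15} gives $\pi(1)=\pi'(1)$ or $\pi(1)+\pi'(1)=k+1$; since label-complementation is itself a structure-preserving bijection between clusters for $\pi'$ and clusters for $\overline{\pi'}$, and since passing from $\pi'$ to $\overline{\pi'}$ preserves both the relation $\pi\stackrel{sc}\sim\pi'$ and its nontriviality, I may replace $\pi'$ by $\overline{\pi'}$ if necessary and assume $\pi(1)=\pi'(1)$ from now on.

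Next I would feed this tree-by-tree equality into the machinery of the proof of Theorem \ref{fssequ}. Apply it to precisely the unlabeled trees underlying the clusters $C_n$ constructed in the proofs of Lemmas \ref{sselem1} and \ref{sselem2}: the ``brooms'' consisting of a root-to-leaf path $v_1\cdots v_k$ together with $n$ pairwise disjoint length-$k$ instances hanging off the vertex $v_i$, and their variant with one extra such branch hung off a second vertex $v_j$. For such a tree the minimal valid highlighting — the one realizing $C_n$ as a cluster — is forced and unique (every highlighting must contain the path $v_1\cdots v_k$ to cover the leaf $v_k$ and each branch-instance to cover its own leaf, and those already cover all vertices), and it has a fixed number $n+1$ (resp.\ $n+2$) of highlighted instances. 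Hence the number of clusters for $\pi$ on such a tree is $r_{C_n}$ \emph{plus} a correction coming from the strictly larger valid highlightings, which also highlight some ``shifted'' length-$k$ windows lying inside the branches. Once that correction is shown to be the same for $\pi$ and $\pi'$ (see the last paragraph), we obtain $r_{C_n}=r'_{C_n}$ for all $n$ and all of these $C_n$ — and $r_{C_n}=r'_{C_n}$ is exactly the input that Lemmas \ref{sselem1} and \ref{sselem2} extract from super-strong equivalence via Lemma \ref{rfcnlem}. The polynomial-identity and root-matching computations of those two lemmas then apply verbatim, giving $\pi(i)=\pi'(i)$ or $\pi(i)+\pi'(i)=k+1$ for every $i$ and, once agreement holds at one index beyond the first, agreement everywhere; combined, this forces $\pi=\pi'$ or $\pi'=\overline\pi$, the desired contradiction.

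The hard part will be controlling the ``correction'' just described. For a path cluster there is nothing to do, but a broom's underlying tree genuinely admits highlightings that also mark additional length-$k$ windows inside the branches, so a priori $\Phi$ only equates $\sum_{S}c_\pi(T,S)$ with $\sum_{S}c_{\pi'}(T,S)$ (the sums over all valid highlightings $S$, where $c_\pi(T,S)$ counts labelings of $T$ making every path in $S$ an instance of $\pi$), rather than the single term $r_{C_n}$. The key point is that such a spurious window can be an instance of $\pi$ simultaneously with the two ``real'' instances it overlaps only if $\pi$ satisfies a proper consecutive self-overlap relation, i.e.\ a nontrivial equality between the standardizations of two overlapping substrings of $\pi$; for patterns admitting no such self-overlap the correction vanishes identically and the argument above is complete, and the remaining self-overlapping patterns — essentially small perturbations of the identity and of its reverse — either have no nontrivial strong c-forest-Wilf equivalent partner in the first place or can be dispatched by an induction on $k$ in which the correction terms for $\pi$ and $\pi'$ are matched by applying $\Phi$ (together with the structure-restriction above) to the shorter pattern governing the overlap. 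Nailing down these exceptional cases, and checking that the self-overlap obstruction really does kill the correction in all other cases, is the one genuinely technical step; the rest is a transcription of the proof of Theorem \ref{fssequ} with ``super-strongly equivalent'' replaced by ``admits a structure-preserving cluster bijection.''
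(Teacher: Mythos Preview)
Your overall strategy is the paper's: reduce via Theorem \ref{gp15} and complementation to $\pi(1)=\pi'(1)$, then feed the broom trees of Lemmas \ref{sselem1} and \ref{sselem2} into the structure-preserving bijection to recover $r_{C_n}=r'_{C_n}$, and finally invoke the polynomial-identity arguments of those lemmas. The difference is that you have manufactured, and then only half-resolved, an obstacle that the paper sidesteps in one line.

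The bijection under discussion is the one implicit in Theorem \ref{gpfclust}: strong c-forest-Wilf equivalence gives, for each pair $(m,n)$, a bijection between $m$-clusters of size $n$. So the hypothetical structure-preserving bijection preserves not only the underlying tree $T$ but also $m$. Now observe that for the specific trees $T$ underlying the $C_n$ of Lemmas \ref{sselem1} and \ref{sselem2}, if we take $m$ equal to the number of leaves of $T$, the highlighting is \emph{forced}: each leaf of $T$ is the endpoint of exactly one length-$k$ downward path, those $m$ leaf-ending paths already cover every vertex, and all $m$ highlights are used up. Hence the set of $m$-clusters on $T$ for $\pi$ is exactly the set of labelings of the single unlabeled cluster $C_n$, and its cardinality is $r_{C_n}$ up to a factor depending only on the automorphisms of $T$ (the same factor for $\pi$ and $\pi'$). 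Structure-preservation at this fixed $m$ then yields $r_{C_n}=r'_{C_n}$ directly, with no correction term.

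Your last paragraph --- on spurious shifted windows and self-overlap conditions --- is attacking a non-issue: those extra windows only arise for $m$ strictly larger than the leaf count, a value of $m$ the argument never needs. And the sketch you give there is in any case far from complete; overlapping patterns are not ``essentially small perturbations of the identity and of its reverse,'' and the promised induction is nowhere in sight. Delete that paragraph, insert the one-sentence observation that $T$ together with $m$ equal to its leaf count determines the highlighting uniquely, and your proof becomes the paper's.
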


\begin{proof}
Note that in our proofs of Lemmas \ref{sselem1} and \ref{sselem2}, the families of unlabeled clusters we construct have the following property: given the underlying rooted tree structure $T$ of the unlabeled cluster and the number $m$ of highlighted instances, there is only one way to highlight $m$ paths of length $k$ in $T$ to yield an unlabeled cluster. This is as $T$ has exactly $m$ leaves, each of which must correspond to the endpoint of a highlighted instance. If $\pi\stackrel{sc}\sim\pi'$, then we have that $\pi(1)=\pi'(1)$ or $\pi(1)+\pi'(1)=k+1$ by Theorem \ref{gp15}. In that case, the proofs of Lemmas \ref{sselem1} and \ref{sselem2} tell us that the number of linear extensions of the corresponding cluster posets, which is precisely the number of clusters for $\pi$ or $\pi'$ with underlying rooted tree $T$ up to a factor of automorphisms, cannot be equal unless $\pi=\pi'$ or $\pi'=\overline\pi$.
\end{proof}

\section*{Acknowledgments} This research was funded by NSF-DMS grant 1949884 and NSA grant H98230-20-1-0009. The author thanks Amanda Burcroff, Swapnil Garg, and Alan Peng for fruitful discussions about this research and for reading drafts of the paper, as well as Noah Kravitz, Ashwin Sah, and Fan Zhou for helpful suggestions. The author also thanks Professor Joe Gallian for suggesting the problem and running the Duluth REU in which this research was conducted and Amanda Burcroff, Colin Defant, and Yelena Mandelshtam for fostering a productive virtual research environment through their mentorship. Finally, the author thanks the referee for catching errors through a careful reading.


\begin{thebibliography}{9}

\bibitem{AA}
K. Anders and K. Archer. Rooted forests that avoid sets of permutations. \emph{European J. Combin.}, 77:1--16, 2018.

\bibitem{BWX}
J. Backelin, J. West, and G. Xin. Wilf-equivalence for singleton classes. \emph{Adv. Appl. Math.}, 38:133--148, 2007.

\bibitem{BLNPPRT}
D. Bevan, D. Levin, P. Nugent, J. Pantone, L. Pudwell, M. Riehl, M. Tlachac. Pattern avoidance in forests of binary shrubs. \emph{Discrete Math. Theor. Comput. Sci.}, 18(2), 2016.

\bibitem{B}
M. Bóna, Permutations avoiding certain patterns: the case of length 4 and some generalizations. \emph{Discrete Math.}, 175:55--67, 1997.

\bibitem{DE}
T. Dwyer and S. Elizalde. Wilf equivalence relations for consecutive patterns. \emph{Adv. Appl. Math.}, 99:134--157,
2018.

\bibitem{E}
S. Elizalde. A survey of consecutive patterns in permutations, Recent trends in combinatorics, pp. 601--618.
Springer, 2016.

\bibitem{EKP}
R. Ehrenborg, S. Kitaev, P. Perry. A spectral approach to consecutive pattern-avoiding permutations. \emph{J. Comb.}, 2:305--353, 2011.

\bibitem{EN03}
S. Elizalde and M. Noy. Consecutive patterns in permutations. \emph{Adv. Appl. Math.}, 30(1--2):110--125, 2003.

\bibitem{EN}
S. Elizalde and M. Noy. Clusters, generating functions and asymptotics for consecutive patterns in permutations.
\emph{Adv. Appl. Math.}, 49:351--374, 2012.

\bibitem{GJ}
I. P. Goulden and D. M. Jackson. An inversion theorem for cluster decompositions of sequences with distinguished
subsequences. \emph{J. London Math. Soc.}, 2(20):567--576, 1979.

\bibitem{GP}
S. Garg and A. Peng. Classical and consecutive pattern avoidance in rooted forests. \emph{J. Comb. Theory Ser. A.}, 194:105699, 2023.

\bibitem{GPp}
S. Garg and A. Peng. Private communication, June 2020.

\bibitem{HW}
S. Hopkins and M. Weiler. Pattern avoidance in poset permutations. \emph{Order}, 33(2):299--310, 2016.

\bibitem{Ki}
S. Kitaev. Patterns in Permutations and Words. Monographs in Theoretical
Computer Science. Springer-Verlag, 2011.

\bibitem{K}
D. E. Knuth. The Art of Computer Programming, Volume 1. Addison-Wesley, 1968.

\bibitem{LS}
M. Lee and A. Sah. Constraining strong c-Wilf equivalence using cluster poset asymptotics. \emph{Adv. Appl. Math.},
103:43--57, 2019.

\bibitem{LPRS}
D. Levin, L. Pudwell, M. Riehl, A. Sandberg. Pattern avoidance in k-ary heaps. \emph{Australas. J. Combin.}, 64(1):120--139, 2016.

\bibitem{Re}
M. Ren. Stanley-Wilf limits for patterns in rooted labeled forests. Preprint, 2023.

\bibitem{R}
E. S. Rowland. Pattern avoidance in binary trees. \emph{J. Combin. Theory, Ser. A}, 117(6), 2010.

\bibitem{SS}
R. Simion and F. W. Schmidt. Restricted permutations. \emph{European J. Combin.}, 6(4):383--406, 1985.

\bibitem{SW}
Z. Stankova and J. West. A new class of Wilf-equivalent permutations. \emph{J. Algebraic Combin.}, 15(3):271--290, 2002.

\bibitem{S}
E. Steingr\'imsson. Generalized permutation patterns--a short survey. Permutation patterns, \emph{London Math. Soc. Lecture Note Ser.}, 376:137--152, 2010.

\end{thebibliography}
\end{document}